\declaretheoremstyle[headfont=\normalfont\bfseries]{bfthmstyle}
\declaretheorem[numberwithin=section,style=bfthmstyle]{Theorem}
\declaretheorem[sharenumber=Theorem,style=bfthmstyle]{Lemma}
\declaretheorem[sharenumber=Theorem,style=bfthmstyle]{Proposition}
\declaretheorem[sharenumber=Theorem,style=bfthmstyle]{Corollary}
\declaretheoremstyle[headfont=\normalfont\bfseries,qed={$\diamond$}]{bfdefstyle}
\declaretheorem[sharenumber=Theorem,style=bfdefstyle]{Definition}
\declaretheorem[sharenumber=Theorem,style=bfdefstyle]{Definition-Proposition}
\declaretheorem[sharenumber=Theorem,style=bfdefstyle]{Definition-Lemma}
\newtheorem{definition-proposition}[Theorem]{Definition-Proposition} %avoid in the future
\newtheorem{definition-lemma}[Theorem]{Definition-Lemma} %avoid in the future
\declaretheoremstyle[headfont=\normalfont\bfseries,qed={$\diamond$}]{bfremstyle}
\newcommand{\rk}{\mathrm{rk}\,}
\newcommand{\im}{\mathrm{im}\,}
\newcommand{\norm}[1]{\left\Vert #1 \right\Vert}
\newcommand{\set}[1]{\left\lbrace #1 \right\rbrace}
\newcommand{\setdef}[2]{\left\{\ #1\ \left|\ \vphantom{#1} #2 \right.\right\}}
\renewcommand{\phi}{\varphi}
\declaretheoremstyle[
notefont=\normalfont, notebraces={}{},
headformat=\mathbb{N}UMBER~\mathbb{N}AME~\mathbb{N}OTE
]{nopar}
 \numberwithin{equation}{section}
\author{Achim Ilchmann, Jonas Kirchhoff, Manuel Schaller}
\title{Port-Hamiltonian descriptor systems are \\
relative generically controllable and stabilizable}
\date{\today}
\begin{document}
\setlength{\parindent}{0em}
\pagestyle{fancy}
\lhead{Achim Ilchmann, Jonas Kirchhoff, Manuel Schaller}
\rhead{Genericity}

\maketitle

\paragraph{Abstract} The present work is a successor of~\cite{IlchKirc21}
 on generic
controllability and of~\cite{IlchKirc22} on
relative generic controllability
of linear differential-algebraic equations.
We extend the result from general, unstructured differential-algebraic equations to differential-algebraic equations of port-Hamilto\-nian type.
We derive new results on relative genericity.
These findings are the basis for characterizing relative generic controllability
 of port-Hamilto\-nian systems in terms of dimensions.
 A similar result is proved for relative generic stabilizability.\vspace{-1mm}
 
\paragraph{Keywords} differential-algebraic equation\ $\cdot$\ port-Hamiltonian system\ $\cdot$\ controllability\ $\cdot$\ stabilizabilty\ $\cdot$\ genericity\ $\cdot$\ relative-genericity\vspace{-1mm}

\paragraph{Nomenclature}\ \\[3mm]
\begin{tabular}[ht]{lp{1pt}p{330pt}}
{$\mathbb{N}$, \ $\mathbb{N}^*$} 
&& {$:= \set{0,1,2,\ldots}$,  \ $:= \set{1,2,\ldots} = \mathbb{N}\setminus\set{0}$, resp.}\\
{$\mathbb{R}$,~$\mathbb{C}$} && {the field of the real, complex numbers, resp.}\\
{$\mathbb{K}$} && {either the field of the real or complex numbers}\\
{$\mathbb{N}_{\leq j}^*$} && {$:= \set{1,\ldots,j},~j\in\mathbb{N}^*$}\\
{$R^{k\times\ell}$} && {the vector space of all~$k\times\ell$ matrices with entries in a ring~$R$}\\
$\mathbf{Gl}(\mathbb{R}^k)$ && general linear group of all invertible~$k\times k$ matrices with entries in~$\mathbb{R}$\\
$M_{i,j}\ (M_{i,\cdot},M_{\cdot,j})$ && the entry~$(i,j)$ (the~$i$-th row, ~$j$-th column, resp.) of a matrix~$M\in R^{k\times \ell}$, $i\in\mathbb{N}^*_{\leq k},j\in\mathbb{N}^*_{\leq\ell}$\\
{$\rk_FM$} && {the rank of~$M\in R^{k\times\ell}$ over the field~$F$}\\
$\norm{\cdot}_{p, q}$ && an operator norm on~$\mathbb{R}^{p\times q}$\\
$\mathbb{B}(x^0,\varepsilon)$&& {$:= \setdef{x\in X}{ \|x-x^0\|<\varepsilon}$, the \textit{open ball} with center~$x^0\in X$ and radius~$\varepsilon\in~(0,\infty)$ for a normed real or complex vector space~$(X,\norm{\cdot})$.}\\
$AC(\mathbb{R},\mathbb{K}^n)$&& the set of all absolutely continuous functions~$f:\mathbb{R}\to\mathbb{K}^n$\\
$\mathcal{L}^1_{\mathrm{loc}}(\mathbb{R},\mathbb{K}^n)$&& the set of all locally integrable  functions~$f:\mathbb{R}\to\mathbb{K}^n$\\
{$f^{-1}(A)$} && {$:= \set{x\in X: f(x)\in A}$, the  preimage of the set~$A\subseteq Y$ under the function~$f: X\to Y$}\\
{$\mathbb{R}[x_1,\ldots,x_n]$} && {$:= \set{\sum_{k = 0}^\ell a_k x_1^{\nu_{k,1}}\cdots x_n^{\nu_{k,n}}\ \big\vert\, \ell\in\mathbb{N},a_k\in\mathbb{R},\nu_{k,j}\in\mathbb{N}}$, the ring of (real) polynomials in~$n$ indeterminants}\\
\end{tabular}

\vfill
\par\noindent\rule{5cm}{0.4pt}\\
\begin{footnotesize}
Corresponding author: Jonas Kirchhoff\\[1em]
Achim Ilchmann\ $\cdot$\ Jonas Kirchhoff\ $\cdot$\ Manuel Schaller\\
Institut für Mathematik, Technische Universität Ilmenau, Weimarer Stra\ss e 25, 98693 Ilmenau, Germany\\
E-mail: achim.ilchmann\ $\cdot$\ jonas.kirchhoff\ $\cdot$\ manuel.schaller@tu-ilmenau.de\\[1em]
Jonas Kirchhoff thanks the Technische Universität Ilmenau and the Freistaat Thüringen for their financial support as part of the Thüringer Graduiertenförderung.
\end{footnotesize}

\newpage

%
%======================================================
\section{Introduction}\label{Sec:Intr}
%======================================================

In the present work we characterize (relative) genericity of  controllability and
stabilizability  of linear finite-dimensional port-Hamiltonian descriptor systems described by linear differential-algebraic input-state-output systems of the 
following form~\cite{mehrmann2022control}
\begin{equation}\label{eq:pH_DAE}
\begin{aligned}
\tfrac{\mathrm{d}}{\mathrm{d}t}Ex & = (J-R)Qx + Bu,
%\\
%y & = B^\top Qx
\end{aligned}
\end{equation}
where~$B\in\mathbb{K}^{\ell\times m}$,~$J = -J^*\in\mathbb{K}^{\ell\times\ell}$,~$\mathbb{K}^{\ell\times\ell}\ni R = R^*\geq 0$ 
and~$E,Q\in\mathbb{K}^{\ell\times n}$ 
with~$E^* Q = Q^*E\geq 0$. 
So two subjects meet: genericity of controllability/stabilizability and port-Hamiltonian descriptor systems. We give a brief literature review on both subjects in the following.
\\

After Rudolf Kalman had introduced and characterized
in~1960~\cite{Kalm60b,Kalm61}
controllability of linear systems of the form
\begin{equation}\label{eq:ABC}
\begin{aligned}
\tfrac{\mathrm{d}}{\mathrm{d}t}x & = Ax + Bu,
%\\
%y & = Cx
\end{aligned}
\end{equation}
where~$(A,B)\in  \mathbb{K}^{n\times n}\times\mathbb{K}^{n\times m}$
for~$n,m\in\mathbb{N}^*$,
Lee and Markus~\cite{LeeMark67} proved in~1967 that the set of all
controllable systems~\eqref{eq:ABC} 
is open and dense with respect to the Euclidean topology. 
In~1974, Wonham~\cite{Wonh74} showed
that this set is not only open and dense but also \textit{conull} w.r.t.\ the Lebesgue measure, i.e. its complement is a Lebesgue null set. In 2019, Hinrichsen and Oeljeklaus~\cite{HinrOelj19} improved Wonham's result. They proved that in the case $m\geq 2$ the set of controllable systems~\eqref{eq:ABC} is generically convex, i.e. the set of pairs of tuples $((A_1,B_1),(A_2,B_2))$ so that all the systems~\eqref{eq:ABC} associated to the tuples $(A_1+\lambda (A_2-A_1),B_1+\lambda(B_2-B_1))$, $\lambda\in [0,1]$, are controllable is generic. 

Recently, in~2021, Wonham's results were generalized by Ilchmann and Kirchhoff~\cite[Thm.\,2.3]{IlchKirc21} to systems described by 
differential-algebraic equations of the form
\begin{align}\label{eq:the_DAE}
\tfrac{\mathrm{d}}{\mathrm{d}t}Ex = Ax + Bu,
\end{align}
where~$(E,A,B)\in \Sigma_{\ell,n,m} := \mathbb{K}^{\ell\times n}\times\mathbb{K}^{\ell\times n}\times\mathbb{K}^{\ell\times m}$
for~$\ell,n,m\in\mathbb{N}^*$.
It was shown that the DAE-systems~\eqref{eq:the_DAE} are generically\footnote{
A set~$S\subseteq\mathbb{R}^n$ is called \textit{generic} if, and only if,
 there exist a proper algebraic variety 
$\mathbb{V}\in\mathcal{V}_n^{\text{prop}}(\mathbb{R})$ so that~$S^c\subseteq\mathbb{V}$.}
\begin{equation}\label{eq:list-contr}
\begin{tabular}{lcc}
freely initializable & if, and only if, &~$\ell\leq n+m$;\\[0.5\normalbaselineskip]
impulse controllable & if, and only if, &~$\ell\leq n+m$;\\[0.5\normalbaselineskip]
behavioural controllable & if, and only if, &~$\ell\neq n+m$;\\[0.5\normalbaselineskip]
completely controllable & if, and only if, &~$\ell< n+m$;\\[0.5\normalbaselineskip]
strongly controllable & if, and only if, &~$\ell< n+m$;
\end{tabular}
\end{equation}
and if one of the conditions on the right-hand side is violated, then the associated controllability notion is generically violated; see Proposition~\ref{Def-Prop:Contr} for a definition of the five controllability concepts. In 2022, Hinrichsen and Oeljeklaus~\cite{HinrOelj22} proved that linear time-invariant ordinary delay-differential equations are generically controllable.

However, the concept of genericity of controllability for DAE-systems has the drawback
that the set~$\Sigma_{\ell,n,m} $ is too ``large'':
if $\ell =n$, then in each arbitrarily small neighbourhood of
$(E,A,B)\in \Sigma_{\ell,n,m} $ there is some invertible~$E'\in\mathbb{K}^{n\times n}$
such that~$(E',A,B)$ is an ordinary differential equation.
To resolve this drawback,
Kirchhoff~\cite{Kirc21pp} introduced the concept of
relative genericity\footnote{See Definition~\ref{def:rel_gen} in the present note.},
and  Ilchmann and Kirchhoff~\cite[Thm.\,3.2]{IlchKirc22} showed in~2022  that a similar
characterization to~\eqref{eq:list-contr} holds for relative genericity of controllability
for various reference sets such as 
$ \set{(E,A,B)\in\Sigma_{\ell,n,m}\,\big\vert\,\rk_{\mathbb{R}} E \leq r}$
for~$r\in\mathbb{N}$.
 
In 2021, Kirchhoff~\cite[Prop.\,III.1]{Kirc21pp} has shown that the set of all 
controllable linear port-Hamiltonian systems described by 
\begin{equation}\label{eq:pH_ODE}
\begin{aligned}
\tfrac{\mathrm{d}}{\mathrm{d}t}x & = JQx + Bu,
\end{aligned}
\end{equation}
where~$B\in\mathbb{K}^{\ell\times m}$,~$J = -J^*\in\mathbb{K}^{\ell\times\ell}$,
and~$Q^*=Q\in\mathbb{K}^{\ell\times n}$,
is relative generic in the set of all systems~\eqref{eq:pH_ODE}.
\\
Similar results of the afore mentioned hold for stabilizability.
\\

 The brief literature review about systems port-Hamiltonian systems 
 is as follows.
Port-Hamiltonian systems were introduced in the seminal work~\cite{Maschke93}.
They provide a flexible and highly structured framework to model complex phenomena by means of input-state-output dynamical systems. 
Application areas encompass many physical domains, such as, e.g., mechanics, electrical circuits, thermodynamics, and multi-energy systems, cf.\ \cite{Warsewa21,Hauschild20,louati2020network} for recent applications.  Due to their energy-based nature, a port-Hamiltonian (pH) structure inherently ensures passivity and stability and thus enables powerful analytic tools \cite{Jacob12}, numerical methods \cite{mehrmann2022control,kotyczka2019discrete} or passivity-based control techniques \cite{van2000l2,OrteScha02}, to analyze, simulate or control the corresponding systems. In view of optimal control, it was recently shown that pH structures can be used to overcome challenges in singular optimal control~\cite{FaulMascPhilSchaWort21,FaulMascPhilSchaWort22}, allowing results that are not available for general input-state-output systems. Another advantage of the pH~model class is its closedness under power-conserving interconnection: A power-conserving coupling of pH systems is again port-Hamiltonian. Last, the class of pH systems can be shown to be particularly robust w.r.t.\ structured perturbations \cite{mehl2021distance}.
\\
Port-Hamiltonian models arise in various formulations, such as geometric (by means of, e.g., Dirac structures \cite{DalsVdS99,GorrMascZwar05,cervera2007interconnection} and Lagrangian subspaces~\cite{MascVdS20}) or explicit state space models, both in finite- \cite{SchaJelt14} and infinite-dimensional \cite{Jacob12,Rashad20} spaces.
\\

This is, so far, the state of the art in relative genericity of controllability/stabiliza\-bility 
of  descriptor port-Hamilonian systems.
In the present paper we turn to linear port-Hamiltonian
differential-algebraic systems described by~\eqref{eq:pH_DAE}.
This class is a generalization of~\eqref{eq:pH_ODE} and a 
restriction of~\eqref{eq:the_DAE}.
 Due to the particular structure in~\eqref{eq:pH_DAE},
 it is not readily possible to apply the results of~\cite[Theorem 2.3]{IlchKirc21}
 and of~\cite[Thm.\,3.2]{IlchKirc22}. 
 However, utilizing Kirchhoff's\cite{Kirc21pp}
    slight adaptation of Wonham's original concept,
 we characterize~--  with striking resemblance to~\eqref{eq:list-contr}~--
 genericity and relative genericity of controllability/stabilizability of the system class~\eqref{eq:pH_DAE}. The latter is the main result of the paper.
\\

Our article is organized as follows. 
In section~\ref{Sec:Contr}, we define subclasses of port-Hamiltonian systems;
recall the definition and algebraic characterizations of the five   controllability 
 and  three stabilizability concepts;
recall the definitions of  (relative) genericity 
and prove some properties that will be intensively  exploited in the 
proofs of the upcoming results. 
Section~\ref{Sec:Interm_results} is an intermediate section;
we derive various lemmata and a proposition 
to show that subsets of systems~\eqref{eq:pH_DAE}
which satisfy certain algebraic constraints (the latter induced
by the characterizations of controllability/stabilizability)
are relative generic in some reference sets of port-Hamiltonian systems.
These results  lay the ground   for the proofs of the main results
in section~\ref{Sec:Main_results-contr}.
The latter are necessary and sufficient criteria on the dimensions~$\ell,n$ and~$m$ under which port-Hamiltonian systems are relative generically controllable.
Similar results for relative generic stabilizability are shown in section~\ref{Sec:Main_results-stab}.
Lastly, we will give a brief overview  of possible future research in 
section~\ref{sec:outlook}.

%_________________________________________________________________________
\section{Controllability, stabilizability, and relative genericity}\label{Sec:Contr}
%_________________________________________________________________________
In this section, we define a class of port-Hamiltonian DAE systems, recall controllability and stabilizability concepts for differential-algebraic equations,
 and collect results on (relative) genericity needed for later purposes.

{\bf \subsection{Port-Hamiltonian systems classes}}\label{subsec:classes}
\noindent
%\begin{Definition}\label{def:port_Hamiltonian}
A \textit{port-Hamiltonian descriptor system} is a differential-algebraic equation 
of the form~\eqref{eq:pH_DAE}.
If~$R>0$, then the resulting DAE is called \textit{dissipative}; otherwise it is called \textit{semi-dissipative}, see~\cite[Definition 4.1.1(xxii) and (xxiii)]{Bern18} and~\cite{AchlArnoMehr22}. We denote the sets of all matrix quintuples associated to (semi-)dissipative port-Hamiltonian systems by
\begin{equation}\label{eq:Sigma-sdH}
\Sigma_{\ell,n,m}^{sdH} := \set{(E,J,R,Q,B)\,\big\vert\,J=-J^*,R = R^*\geq 0,E^*Q = Q^*E\geq 0}
\end{equation}
and
\begin{equation}\label{eq:Sigma-dH}
\Sigma_{\ell,n,m}^{dH} := \set{(E,J,R,Q,B)\,\big\vert\,J=-J^*,R = R^*> 0,E^*Q = Q^*E\geq 0}.
\end{equation}
An important subclass of port-Hamiltonian descriptor systems are \textit{conservative} systems, i.e.~$R = 0$, denoted by
\begin{equation}\label{eq:Sigma-H}
\Sigma_{\ell,n,m}^{H} := \set{(E,J,Q,B)\in\mathbb{K}^{\ell\times n}\times\mathbb{K}^{\ell\times \ell}\times\mathbb{K}^{\ell\times n}\times\mathbb{K}^{\ell\times m}\,\big\vert\,J=-J^*,E^*Q = Q^*E\geq 0}.
\end{equation}

%_________________________________________________________________________
{\bf \subsection{Controllability and stabilizability}}
%_________________________________________________________________________
%
To define controllability and stabilizability we need to define the
\emph{behaviour} of   the differential-algebraic equation~\eqref{eq:the_DAE},
where~$\ell,n,m\in\mathbb{N}^*$,  that is the set
\begin{align*}
\mathfrak{B}_{[E,A,B]} := \set{(x,u)\in\mathcal{L}^1_{\mathrm{loc}}(\mathbb{R},\mathbb{K}^n)\times\mathcal{L}^1_{\mathrm{loc}}(\mathbb{R},\mathbb{K}^m)\,\left\vert \begin{array}{l}
Ex\in AC(\mathbb{R},\mathbb{K}^n)~\text{\ and}\\
\tfrac{\mathrm{d}}{\mathrm{d}t} (Ex) = Ax + Bu~\text{a.e.}
\end{array} \right.}
\end{align*}
wherein ``a.e.'' stands for ``almost everywhere'' with respect to the Lebesgue measure.

Next we recall the  five definitions and characterizations of controllability
for linear dif\-fer\-ential-algebraic systems~\eqref{eq:the_DAE}.

\begin{Proposition}[{\cite[Def.~2.1 and Cor.~4.3]{BergReis13_loc}}]\label{Def-Prop:Contr} 
Consider the differential-algebraic equation~\eqref{eq:the_DAE} for some~$\ell,n,m\in\mathbb{N}^*$.
Write
\begin{align*}
S_{\textit{\text{controllable}}} := \set{(E,A,B)\in\Sigma_{\ell,n,m}\,\big\vert\,~\eqref{eq:the_DAE}~\textit{\text{controllable}}}
\end{align*}
for each of the following controllability concepts defined and algebraically characterized as follows:
\begin{align*}
\begin{array}{lrclcl}
(E,A,B)\in S_{\text{freely initializable}}
&:\iff&&
\forall\, x^0\in\mathbb{K}^n~\exists\, (x,u)\in\mathfrak{B}_{[E,A,B]}
: \ x(0) = x^0\\[2ex]
&\iff&& \rk[E,B] = \rk[E,A,B]\ ;\\[4ex]
(E,A,B)\in S_{\text{impulse controllable}}
&:\iff&&
\forall\, x^0\in\mathbb{K}^n~\exists\, (x,u)\in\mathfrak{B}_{[E,A,B]}
: \ Ex^0 = Ex(0)\\[2ex]
&\iff& &
\forall\, Z\in\mathbb{K}^{n\times n-\rk E}~\text{with}~\mathrm{im}_{\mathbb{K}} Z = \ker_{\mathbb{K}}E \\
&&& : \  \rk[E,A,B] = \rk[E,AZ,B]\  ;\\[4ex]
\end{array}
\end{align*}
\begin{align*}
\begin{array}{lrclcl}
(E,A,B)\in S_{\text{behavioural controllable}}
&:\iff&&
\forall\, (x_1,u_1),(x_2,u_2)\in\mathfrak{B}_{[E,A,B]}
\ \exists\, T>0~\exists\, (x,u)\in\mathfrak{B}_{[E,A,B]}\\[2ex]
&&&: \
(x,u)(t) = \begin{cases}(x_1,u_1)(t), & t<0\\(x_2,u_2)(t), & t>T\end{cases}\\[3ex]
&\iff&& 
\forall\, \lambda\in\mathbb{C}\ : \rk_{\mathbb{K}(x)}[xE-A,B]  
= \rk_{\mathbb{C}}[\lambda E-A,B]\  ;\\[4ex]
(E,A,B)\in S_{\text{completely controllable}}
&:\iff&&
\exists\, T>0~\forall\, x^0,x_T\in\mathbb{R}^n~\exists\, (x,u)\in\mathfrak{B}_{[E,A,B]}\\
&&& : \ x(0) = x^0\ \wedge \ x(T) = x_T \\[2ex]
&\iff&& 
\forall\,\lambda\in\mathbb{C}: \rk [E,A,B] = \rk [E,B] = \rk[\lambda E-A,B] \ ;\\[4ex]
(E,A,B)\in S_{\text{strongly  controllable}}
&:\iff&&
\exists\, T>0~\forall\, x^0,x_T\in\mathbb{R}^n
~\exists\, (x,u)\in\mathfrak{B}_{[E,A,B]}\\[2ex]
&&& :\ Ex(0) = Ex^0 \ \wedge\ Ex(T) = Ex_T\\[2ex]
&\iff&& 
\forall\, Z\in\mathbb{K}^{n\times (n-\rk E)}\
 \text{with}~\im_{\mathbb{K}} Z = \ker_{\mathbb{K}}E~\forall\,\lambda\in\mathbb{C}\\[2ex]
&&&  : \ \rk[E,A,B] = \rk[E,AZ,B]   = \rk [\lambda E-A,B] \ .\\[2ex]
\end{array}
\end{align*}
Furthermore,
\begin{align*}
S_{\text{completely~controllable}} & = S_{\text{freely initializable}}\cap S_{\text{behavioural controllable}}\quad\text{and}\\
S_{\text{strongly~controllable}} & = S_{\text{impulse~controllable}}\cap S_{\text{behavioural controllable}}.
\end{align*}
For port-Hamiltonian systems~\eqref{eq:pH_DAE}, we use, analogously to general differential-algebraic equations, the notation
\begin{equation}\label{eq:Hcontr}
S_{\text{\textit{controllable}}}^H := \set{(E,J,Q,B)\in\Sigma_{\ell,n,m}^H\,\big\vert\,(E,JQ,B)\in S_{\text{\textit{controllable}}}},
\end{equation}
and
\begin{equation}\label{eq:dHcontr}
S_{\text{\textit{controllable}}}^{dH} := \set{(E,J,R,Q,B)\in\Sigma_{\ell,n,m}^{dH}\,\big\vert\,(E,(J-R)Q,B)\in S_{\text{\textit{controllable}}}}
\end{equation}
where \textit{controllable} stands for either of the five aforementioned controllability concepts.
\end{Proposition}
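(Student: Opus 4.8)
The plan is as follows. The five ``$:\iff$'' lines are definitions and require no argument; the substance of the statement consists of the five rank characterisations together with the two intersection identities. The unifying tool I would use is the \emph{quasi-Kronecker form} of the matrix pencil $sE-A$ augmented by $B$: under \emph{system equivalence} $(E,A,B)\mapsto(WES,WAS,WB)$ with $W\in\mathbf{Gl}(\mathbb{K}^\ell)$ and $S\in\mathbf{Gl}(\mathbb{K}^n)$ --- and, for the feedback-invariant concepts, additionally under input transformation and state feedback $A\mapsto A+BF$ --- one checks that both the behavioural solution sets $\mathfrak{B}_{[E,A,B]}$ (up to the induced change of coordinates) and each of the displayed rank expressions are invariant. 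Hence it suffices to verify every equivalence after transforming $(E,A,B)$ into a convenient normal form.

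First I would bring $(E,A,B)$ into the quasi-Kronecker / feedback normal form, in which the pencil decouples into four independent blocks: a completely controllable underdetermined block, an overdetermined block, a regular block carrying the finite eigenvalues, and a nilpotent block carrying the behaviour at infinity (the index). In this decoupled form each concept becomes transparent: behavioural controllability holds iff the regular block has no uncontrollable finite eigenvalue, which is precisely the statement that $\rk_{\mathbb{C}}[\lambda E-A,B]$ is constant in $\lambda$ and equal to the generic rank $\rk_{\mathbb{K}(s)}[sE-A,B]$; free initialisability holds iff the overdetermined block is absent, which reads $\rk[E,B]=\rk[E,A,B]$; and impulse controllability holds iff the nilpotent block is controllable at infinity, which, choosing $Z$ with $\im Z=\ker E$, is the condition $\rk[E,A,B]=\rk[E,AZ,B]$ (one also checks this is independent of the choice of $Z$). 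The remaining two characterisations, complete and strong controllability, then arise as the simultaneous validity of a reachability condition and the behavioural condition in the normal form.

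The two intersection identities I would obtain by directly comparing the rank conditions rather than reproving them from the definitions. Assume free initialisability, i.e.\ $\rk[E,B]=\rk[E,A,B]$. Then the generic rank satisfies $\rk_{\mathbb{K}(s)}[sE-A,B]=\rk[E,A,B]=\rk[E,B]$, so the behavioural condition collapses to $\rk[\lambda E-A,B]=\rk[E,B]$ for all $\lambda\in\mathbb{C}$; together with the standing assumption this is exactly the stated characterisation of complete controllability, giving $S_{\text{completely controllable}}=S_{\text{freely initializable}}\cap S_{\text{behavioural controllable}}$. Replacing the free-initialisability condition by the impulse-controllability condition $\rk[E,A,B]=\rk[E,AZ,B]$ and running the same comparison yields $S_{\text{strongly controllable}}=S_{\text{impulse controllable}}\cap S_{\text{behavioural controllable}}$. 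The port-Hamiltonian notations in~\eqref{eq:Hcontr} and~\eqref{eq:dHcontr} are mere specialisations obtained by substituting $A=JQ$ or $A=(J-R)Q$ and need no separate proof.

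The main obstacle is the behavioural characterisation, and more precisely the passage between the concatenation definition and the rank test. Showing that two trajectory germs in $\mathcal{L}^1_{\mathrm{loc}}$ can always be glued after some finite time exactly when $[\lambda E-A,B]$ never drops rank requires the full solution theory of DAEs: one must construct a sufficiently regular transition trajectory, control the possibly impulsive components coming from the nilpotent block, and argue that a finite uncontrollable eigenvalue provides a genuine obstruction to gluing. The impulse-controllability equivalence is the second delicate point, since the formulation through an arbitrary $Z$ with $\im Z=\ker E$ must be shown well posed. In the present paper these technical cores are not reproved but quoted from~\cite[Def.~2.1 and Cor.~4.3]{BergReis13_loc}; the normal-form reduction above is the route I would take for a self-contained argument.
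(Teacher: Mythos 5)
Your proposal is correct and takes essentially the same route as the paper: the paper's proof is simply a citation of Berger and Reis, whose argument is precisely the feedback/quasi-Kronecker normal-form reduction you outline (invariance of the behaviour and of the rank expressions under system equivalence, then blockwise verification), and you correctly note that these technical cores are quoted rather than reproved here. Your direct rank-comparison derivation of the two intersection identities is also sound, since the generic rank $\rk_{\mathbb{K}(x)}[xE-A,B]$ is sandwiched between $\rk[E,B]$ (resp.\ $\rk[E,AZ,B]$) and $\rk[E,A,B]$, so it collapses to the common value under free initializability (resp.\ impulse controllability).
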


\color{black}
\begin{proof}
Berger and Reis~\cite{BergReis13a}    derive a feedback  form and use this as a tool in conjunction with
`canonical' representatives of certain equivalence classes
to prove all characterizations 
of controllability in their survey.  
Note that   in their characterization of strongly controllability, the term~`$+\im_\mathbb{R} E$' is missing in the first respective line in~\cite[Cor.\,4.3]{BergReis13a}.
\end{proof}

Additionally to controllability, we will characterize genericity of stabilizability for port-Hamiltonian descriptor systems. To this end, we recall the definitions and algebraic characterizations. 

\begin{Proposition}[{\cite[Def.~2.1 and Cor.~4.3]{BergReis13_loc}}]
\label{Prop:Stab}
Consider the differential-algebraic equation~\eqref{eq:the_DAE} for some~$\ell,n,m\in\mathbb{N}^*$.
Writing, similar to the case of controllability,  
\begin{align*}
S_{\textit{\text{\textit{stabilizable}}}} := \set{(E,A,B)\in\Sigma_{\ell,n,m}\,\big\vert\,~\eqref{eq:the_DAE}~\textit{\text{\textit{stabilizable}}}},
\end{align*}
  the following definitions and characterizations  hold true:
\begin{align*}
\begin{array}{lrlcl}
(E,A,B)\in S_{\text{completely stabilizable}}
&:\iff& \forall\, x^0\in\mathbb{R}^n~\exists\, (x,u)\in\mathfrak{B}_{(E,A,B)}\\[2ex]
& & :\ x(0) = x^0 \ \wedge\ \lim_{t\to\infty}\mathrm{ess~sup}_{\tau\geq t}\norm{x(\tau)}= 0\\[2ex]
 & \iff&
 \forall\,\lambda\in\overline{\mathbb{C}}_{+}\ : 
  \rk_{\mathbb{R}}[E,A,B] = \rk_{\mathbb{R}}[E,B] =\rk_{\mathbb{C}}[\lambda E-A,B];\\[2ex]
(E,A,B)\in S_{\text{strongly stabilizable}}
&:\iff&
\forall\, x^0\in\mathbb{R}^n~\exists\, (x,u)\in\mathfrak{B}_{(E,A,B)}\\[2ex]
& & :\ Ex(0) = Ex^0  \ \wedge \   \lim_{t\to\infty}Ex(t) = 0\\[2ex]
&\iff& 
\forall\,\lambda\in\overline{\mathbb{C}}_+~\forall\,Z \in\mathbb{K}^{n\times (n-\rk E)}~\text{with}~\im Z = \ker E:\\[2ex]
&& \rk_{\mathbb{R}}[E,A,B] = \rk_{\mathbb{R}}[E,AZ,B]  = \rk_{\mathbb{C}}[\lambda E-A,B] ; \\[2ex]
(E,A,B)\in S_{\text{behavioural stabilizable}}
&:\iff&
\forall\, (x,u)\in\mathfrak{B}_{(E,A,B)}~\exists\, (x_1,u_1)
\in\mathfrak{B}_{(E,A,B)}\\[2ex]
&& :\ \left[\forall\, t<0:(x(t),u(t)) = (x_1(t),u_1(t))\right]\\[2ex]
& & \quad \wedge\quad
\lim_{t\to\infty}\mathrm{ess~sup}_{\tau\geq t}\norm{(x_1(\tau),u_1(\tau))} = 0 
\\[2ex]
&\iff& 
\forall\,\lambda\in\overline{\mathbb{C}}_+: \rk_{\mathbb{R}(x)}[xE-A,B] = \rk_{\mathbb{C}}[\lambda E-A,B].
\end{array}
\end{align*}
For port-Hamiltonian systems~\eqref{eq:pH_DAE}, we use  the notations
\begin{align*}
S_{\text{\textit{stabilizable}}}^H := \set{(E,J,Q,B)\in\Sigma_{\ell,n,m}^H\,\big\vert\,(E,JQ,B)\in S_{\text{\textit{stabilizable}}}}
\end{align*}
and
\begin{align*}
S_{\text{\textit{stabilizable}}}^{dH} := \set{(E,J,R,Q,B)\in\Sigma_{\ell,n,m}^{dH}\,\big\vert\,(E,(J-R)Q,B)\in S_{\text{\textit{stabilizable}}}}.
\end{align*}
\end{Proposition}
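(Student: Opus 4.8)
The plan is to follow verbatim the strategy used for Proposition~\ref{Def-Prop:Contr}: the three stabilizability notions and their pencil characterizations are established by Berger and Reis~\cite{BergReis13_loc} via the feedback canonical form, so I would reduce to a canonical representative and read the conditions off. First I would record that each of the three notions is invariant under system equivalence $(E,A,B)\mapsto(SET,SAT,SBG)$ with $S,T,G$ invertible, and under proportional state feedback $(E,A,B)\mapsto(E,A+BF,B)$ with $F\in\mathbb{K}^{m\times n}$: coordinate changes, input scalings and feedback all map the behaviour $\mathfrak{B}_{[E,A,B]}$ bijectively onto the transformed behaviour while keeping $x$ (hence also $Ex$) fixed up to the coordinate change $T$, so the asymptotic decay conditions $\lim_{t\to\infty}\mathrm{ess~sup}_{\tau\geq t}\norm{x(\tau)}=0$ and $\lim_{t\to\infty}Ex(t)=0$ are preserved. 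Both sides of each claimed equivalence are likewise feedback invariant, so it suffices to prove the characterizations on a single normal form.

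Next I would bring $(E,A,B)$ into the Berger--Reis feedback form, which decouples the system into a completely controllable part, a family of nilpotent (``fast'') blocks governed by $\ker E$, and an uncontrollable ordinary differential block $\tfrac{\mathrm{d}}{\mathrm{d}t}w=A_{\mathrm{u}}w$ whose spectrum $\sigma(A_{\mathrm{u}})$ carries exactly the uncontrollable finite modes. On this normal form the behaviour splits as a product and each decay requirement becomes a condition on the individual blocks: the controllable block can always be steered to zero, the nilpotent blocks contribute the impulsive/consistency obstructions, and the block $A_{\mathrm{u}}$ decays for every admissible initial value precisely when $\sigma(A_{\mathrm{u}})$ lies in the open left half-plane $\mathbb{C}_-$. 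Reading off, behavioural stabilizability is equivalent to $\sigma(A_{\mathrm{u}})\subseteq\mathbb{C}_-$, which in pencil terms is $\rk_{\mathbb{R}(x)}[xE-A,B]=\rk_{\mathbb{C}}[\lambda E-A,B]$ for all $\lambda\in\overline{\mathbb{C}}_+$; strong stabilizability adds the fast-block (consistent-initialization) requirement encoded by $\rk_{\mathbb{R}}[E,AZ,B]$, and complete stabilizability replaces the latter by the stronger free-initialization requirement $\rk_{\mathbb{R}}[E,B]$, exactly paralleling the controllability list.

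Finally I would verify that the rank conditions do not depend on the chosen $Z$ with $\im Z=\ker E$: since any two such matrices have $n-\rk E$ columns spanning the same $(n-\rk E)$-dimensional space, they differ by right multiplication with an invertible matrix, whence $\im[E,AZ,B]$ and its rank are unchanged, and the argument that fixes the controllability characterizations applies verbatim. The main obstacle is the behavioural notion: here one must work in the distributional/weak solution framework, make the product decomposition of $\mathfrak{B}_{[E,A,B]}$ under the canonical form precise, and argue that concatenability together with asymptotic decay forces stability of exactly the uncontrollable finite spectrum and imposes nothing on the fast part. This is the technical heart carried out in~\cite[Cor.~4.3]{BergReis13_loc}, which I would cite after matching conventions, checking in particular — as in Proposition~\ref{Def-Prop:Contr} — that no $+\im_{\mathbb{R}}E$-type term is dropped in the stated pencil equalities.
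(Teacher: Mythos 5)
Your proposal is correct and takes essentially the same route as the paper: the paper's proof likewise rests on Berger and Reis~\cite[Def.~2.1, Thm.~3.3 and Cor.~4.3]{BergReis13_loc}, using invariance under feedback equivalence and the feedback canonical form to transfer the pencil characterizations. The one point to make explicit under your ``matching conventions'' step is the caveat the paper itself records: the behavioural stabilizability definition used here (decay of the pair $(x_1,u_1)$) is an adaptation of the cited one, and it is exactly the normal form under feedback equivalence that shows the algebraic characterization is unaffected by this adaptation.
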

\begin{proof}
For the definitions see~\cite[Definition 2.1]{BergReis13_loc} and for the algebraic criteria see~\cite[Corollary 4.3]{BergReis13_loc}. Please note that although we adapted the definition of behavioural stabilizability, in view of the normal form under feedback equivalence~\cite[Theorem 3.3]{BergReis13_loc}, the algebraic characterization of Berger and Reis still holds, see also~\cite{IlchKirc22}.
\end{proof}

\color{black}

%__________________________________________________________________
%___________________________________________________________________
{\bf\subsection{Genericity and relative genericity}\label{subsec:Gen}}
\noindent
When it comes to genericity, there are various  approaches available in the literature. Purely topological, genericity is commonly defined as follows.

\begin{Definition}[{\cite[p.\,1]{ReicZasl14}}]\label{def:gen_topo}
Let~$(X,\mathcal{O})$ be a topological space. A set~$S\subseteq X$ is \textit{generic} if, and only if, it contains a \textit{residual set}, i.e. a family of open and dense sets~$O_n\in \mathcal{O}$, $n\in\mathbb{N}$, 
such that \ 
$\bigcap\limits_{n\in\mathbb{N}} O_n \subset S$.
\end{Definition}

If the topological space~$(X,\mathcal{O})$ is a Baire space~\cite[Definition 10.2, p.\,249]{Dugu70}, then all generic sets are dense. Additionally to the purely topological approach to genericity, there is also the measure theoretic point of view. In this context, Vovk defines a \textit{typical point}~\cite[p.\,274]{Vovk15pp}. This can be rephrased for genericity as follows.

\begin{Definition}
Let~$(X,\mathcal{A},\mu)$ be a complete (outer) measure space. A set~$S\subseteq X$ is called \textit{generic} if, and only if,~$S$ is \textit{$\mu$-conull}, i.e.~$S^c$ is measurable and~$\mu(S^c) = 0$.
\end{Definition}

It would be nice to have a statement  `If~$X$ admits a topology and~$\mu$ is a continuous Borel measure so that all open sets have nonzero measure, then genericity in the measure theoretic sense implies genericity in the topological sense.' However, a classical result says that the real line admits a residual nullset, see~\cite[Theorem 1.6, p.\,4]{Oxto71}. Ergo, there exists some set that is generic in the Euclidean topological sense, but its complement is generic in the Lebesgue measure theoretical sense. Since the Lebesgue measure is nontrivial, we see that our set is not generic in the Lebesgue measure theoretical sense. Conversely, by the Baire category theorem \cite[Theorem 10.1]{Dugu70}, 
 there is no \textit{meagre set} (i.e.~a set, whose complement is residual) that is also residual. Thus, the complement of our set cannot contain any residual set and is therefore not generic in the Euclidean topological sense. Thus, the two aforementioned genericity concepts are, in general, unrelated. To this end, we will utilize a concept of genericity that unites the topological and measure theoretic point of view.

Consider 
 the real finite-dimensional coordinate space~$\mathbb{R}^n$, which is intrinsically a Baire space -- equipped with the Euclidean topology -- and intrinsically a complete measure space -- equipped with the Lebesgue measure. In this case, 
Wonham introduced the classical concept of genericity   that unites the Euclidean topological and the Lebesgue meaure theoretic concepts of genericity using the Zariski topology as follows.

\begin{Definition}[{\cite[p.\,28]{Wonh74} and~\cite{Reid88}}]\label{def:gen}
Let~$n\in\mathbb{N}^*$.
A set~$\mathbb{V}\subseteq\mathbb{R}^n$ is called an \textit{algebraic set} if, and only if, 
\begin{align*}
\exists \ p_1,\ldots,p_k\in\mathbb{R}[x_1,\ldots,x_n] \ : \
\mathbb{V} = \bigcap_{i = 1}^k p_i^{-1}(\set{0}),
\end{align*}
i.e.\ algebraic sets are the locus of the common zeros of finitely many polynomials in~$n$ indeterminants.

A set~$S$ is called \textit{generic} in~$\mathbb{R}^n$ if, and only if, 
there is some strict algebraic set~$\mathbb{V}\subsetneq\mathbb{R}^n$ 
 so that~$S^c\subseteq \mathbb{V}$.
\end{Definition}

Let the polynomials~$p_i$ in Definition~\ref{def:gen} generate the ideal
\begin{align*}
I = \set{\left.\sum_{j = 1}^\kappa r_j p_{i_j}\,\right\vert\, 
\begin{array}{ll}
\kappa\in\mathbb{N}_0, \ r_1,\ldots,r_\kappa\in\mathbb{R}[x_1,\ldots,x_n],\\
i_1,\ldots,i_\kappa\in\mathbb{N}^*_{\leq k}
\end{array}
},
\end{align*}
i.e.\ an additive subgroup that is closed under multiplication with any polynomial. It is easy to verify that all members of~$I$ vanish at the algebraic set~$\mathbb{V}$. In later considerations, we make use of the correspondence of algebraic sets 
in~$\mathbb{R}^n$ and ideals in the ring of~$n$-variate polynomials. Since the latter is a commutative ring,  there is no need to distinguish between left and right ideals.

\begin{Lemma}[{\cite[pp.\,50/1]{Reid88}}]\label{lem:Zariski_und_ideale}
A set~$\mathbb{V}\subseteq\mathbb{R}^n$ is an algebraic set if, and only if, there is some ideal~$I\subseteq\mathbb{R}[x_1,\ldots,x_n]$ so that
\begin{align*}
\mathbb{V} = \set{x\in\mathbb{R}^n\,\big\vert\ \forall p\in I: p(x) = 0}.
\end{align*}
\end{Lemma}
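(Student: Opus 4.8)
The plan is to prove the two implications separately, exploiting in both the elementary fact that evaluation at a fixed point is a ring homomorphism $\mathbb{R}[x_1,\ldots,x_n]\to\mathbb{R}$. For a set $P\subseteq\mathbb{R}[x_1,\ldots,x_n]$ I write $Z(P) := \set{x\in\mathbb{R}^n\,\big\vert\,\forall p\in P: p(x) = 0}$ for its common zero locus, so that the asserted equivalence reads: $\mathbb{V}$ is algebraic if, and only if, $\mathbb{V} = Z(I)$ for some ideal $I$.

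For the direction from algebraic sets to ideals, suppose $\mathbb{V} = \bigcap_{i=1}^k p_i^{-1}(\set{0}) = Z(\set{p_1,\ldots,p_k})$ as in Definition~\ref{def:gen}, and let $I$ be the ideal generated by $p_1,\ldots,p_k$. I would show $\mathbb{V} = Z(I)$. The inclusion $Z(I)\subseteq\mathbb{V}$ is immediate, since each $p_i\in I$. For the reverse inclusion I would take $x\in\mathbb{V}$ and note that any $p = \sum_j r_j p_{i_j}\in I$ satisfies $p(x) = \sum_j r_j(x)\,p_{i_j}(x) = 0$, because every $p_{i_j}(x) = 0$; hence $x\in Z(I)$.

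For the converse direction, suppose $\mathbb{V} = Z(I)$ for some ideal $I\subseteq\mathbb{R}[x_1,\ldots,x_n]$. Here the key ingredient is Hilbert's Basis Theorem, which asserts that $\mathbb{R}[x_1,\ldots,x_n]$ is Noetherian and that therefore $I$ is finitely generated, say $I = (p_1,\ldots,p_k)$. With a finite generating set in hand, the same evaluation argument as above yields $Z(I) = Z(\set{p_1,\ldots,p_k}) = \bigcap_{i=1}^k p_i^{-1}(\set{0})$, which is an algebraic set in the sense of Definition~\ref{def:gen}.

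The only nontrivial step, and hence the main obstacle, is the appeal to Hilbert's Basis Theorem in the converse direction: without finite generation one cannot pass from the (a priori infinite) ideal $I$ back to a finite list of defining polynomials, and the definition of an algebraic set demands finitely many. Everything else reduces to the homomorphism property of point evaluation. I would either cite the Basis Theorem from a standard reference on commutative algebra or, since the ground ring $\mathbb{R}$ is a field, recall its short inductive proof via the ideal of leading coefficients.
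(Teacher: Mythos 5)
Your proof is correct. The paper gives no proof of its own for this lemma (it simply cites Reid, pp.~50--51), and your argument is precisely the standard one found there: the easy direction via the homomorphism property of point evaluation, and the converse via Hilbert's Basis Theorem, which you rightly identify as the essential ingredient for passing from an arbitrary ideal back to the finitely many defining polynomials required by Definition~\ref{def:gen}.
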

Genericity in the sense of Definition~\ref{def:gen} can also be characterised by means of the \textit{Zariski topology}. This topology is given by its closed sets, that are precisely the algebraic sets, see~\cite[p.\,50]{Reid88}.

\begin{Lemma}\label{lem:gen_characterisation}
Let~$n\in\mathbb{N}^*$. As set~$S\subseteq\mathbb{R}^n$ is generic if, and only if, it contains a non-empty 
Zariski-open set~$O\subseteq\mathbb{R}^n$.
\end{Lemma}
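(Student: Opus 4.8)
The plan is to prove both implications by pure complementation, exploiting the fact stated just before the lemma that the Zariski-closed sets are precisely the algebraic sets. Everything reduces to matching the strictness condition $\mathbb{V}\subsetneq\mathbb{R}^n$ in Definition~\ref{def:gen} with the non-emptiness of the Zariski-open set $O$, so I do not expect to need any of the ideal-theoretic machinery beyond the identification of closed sets with algebraic sets.

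For the forward direction, I would suppose $S$ is generic. By Definition~\ref{def:gen} there is a strict algebraic set $\mathbb{V}\subsetneq\mathbb{R}^n$ with $S^c\subseteq\mathbb{V}$. Setting $O:=\mathbb{R}^n\setminus\mathbb{V}$, I observe that $O$ is Zariski-open as the complement of a Zariski-closed (algebraic) set; that $O\neq\emptyset$ precisely because $\mathbb{V}$ is a strict subset of $\mathbb{R}^n$; and that $S^c\subseteq\mathbb{V}$ gives, by taking complements, $O=\mathbb{R}^n\setminus\mathbb{V}\subseteq S$. Hence $S$ contains the non-empty Zariski-open set $O$. For the converse, I would suppose $S\supseteq O$ for some non-empty Zariski-open set $O$. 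Then $\mathbb{V}:=\mathbb{R}^n\setminus O$ is Zariski-closed, hence an algebraic set by the characterisation of the Zariski topology (cf.\ Lemma~\ref{lem:Zariski_und_ideale}); it is strict, $\mathbb{V}\subsetneq\mathbb{R}^n$, exactly because $O\neq\emptyset$; and $O\subseteq S$ yields $S^c\subseteq\mathbb{R}^n\setminus O=\mathbb{V}$. By Definition~\ref{def:gen} this is precisely the statement that $S$ is generic.

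I expect there to be no genuine obstacle here: the only point requiring a moment's care is to verify that the notions \emph{strict algebraic set} and \emph{non-empty Zariski-open set} are interchanged correctly under complementation, that is, that $\mathbb{V}\subsetneq\mathbb{R}^n$ is equivalent to $\mathbb{R}^n\setminus\mathbb{V}\neq\emptyset$, and to invoke the identification of the Zariski-closed sets with the algebraic sets. The whole argument is therefore a symmetric pair of one-line complementations, and the two directions are essentially mirror images of each other.
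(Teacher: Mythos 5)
Your proof is correct and follows essentially the same route as the paper's: both arguments are pure complementation, identifying Zariski-open sets with complements of algebraic sets, translating $O\subseteq S$ into $S^c\subseteq O^c$, and matching non-emptiness of $O$ with strictness of $\mathbb{V}$. The paper merely compresses this into three lines (and invokes the definition of the Zariski topology directly rather than Lemma~\ref{lem:Zariski_und_ideale}, which concerns ideals), so your write-up is just a more explicit rendering of the same proof.
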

\begin{proof}
A set~$O\subseteq\mathbb{R}^n$ is Zariski-open if, and only if,~$O^c$ is Zariski-closed. By definition of the Zariski topology, this is the case if, and only if,~$O^c$ is an algebraic set. Thus, the assertion holds true since~$O\subseteq S$ if, and only if,~$S^c\subseteq O^c$.
\end{proof}

\color{black}

The Zariski topology on~$\mathbb{R}^n$ is strictly coarser than the Euclidean topology and has the property that each non-empty Zariski-open set is not only Euclidean dense but also Lebesgue conull, see~\cite[p.\,240]{Fede69}. Further, there are residual conull sets that contain no non-empty Zariski open set (e.g. the natural numbers are an Euclidean closed Lebesgue nullset). Therefore Wonham's concept \textcolor{black}{from Definition~\ref{def:gen}} is strictly stronger than the Euclidean topological and Lebesgue measure theoretic approach to genericity. Additionally, from the tame nature of polynomials, the authors would like to argue that the concept of genericity in the sense of Definition~\ref{def:gen} is rather handy to work with as opposed to the natural choice of defining genericity as residual (or even open and dense) conull sets. Unfortunately, a drawback of Wonham's genericity as defined in Definition~\ref{def:gen} is that its naive extension to genericity with respect to some reference set~$V\subseteq\mathbb{R}^n$ fails in general. The relative Zariski topology on~$V$ has, in general, not the same favourable properties regarding the Euclidean topology (and the Lebesgue measure, if there is some reasonable way of defining such a measure) which allows to study genericity in the sense of Definition~\ref{def:gen} as a reasonable concept; the simplest example is a discrete set with at least two points. To overcome this issue, Kirchhoff used in~\cite{Kirc21pp} an adapted concept for genericity in some reference se, which was refined in~\cite{IlchKirc22} and is defined as follows.

\begin{Definition}[{\cite[Def.~I.2]{Kirc21pp} and~\cite[Def.~1.2]{IlchKirc22}}]\label{def:rel_gen}
Let~$n\in\mathbb{N}^*$ and~$S,V\subseteq\mathbb{R}^n$.~$S$ is \textit{relative generic} in~$V$ if, and only if, there is some algebraic set~$\mathbb{V}\subseteq\mathbb{R}^n$ so that
\begin{align*}
S^c\cap V\subseteq\mathbb{V}\cap V\quad\text{and}\quad\mathbb{V}^c\cap V~\text{is~Euclidean~dense~in}~V.
\end{align*}
\end{Definition}

Analogously to Wonham's original concept as given in Definition~\ref{def:gen}, relative genericity can be characterised in terms of the relative Euclidean and relative Zariski topologies as follows.

\begin{Lemma}[{\cite[Lemma 2.1]{IlchKirc22}}]\label{lem:rel_gen_char}
Let~$n\in\mathbb{N}^*$ and~$S,V\subseteq\mathbb{R}^n$.~$S$ is relative generic in~$V$ if, and only if,~$S\cap V$ contains some relative Zariski open, relative Euclidean dense set~$O\subseteq V$.
\end{Lemma}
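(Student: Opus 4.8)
The plan is to exploit the elementary dictionary between the relative Zariski topology on $V$ and the ambient algebraic sets, and then to translate Definition~\ref{def:rel_gen} into the language of relative open sets by a pure set-complementation argument. Recall that, by the discussion preceding Lemma~\ref{lem:Zariski_und_ideale}, the Zariski-closed subsets of $\mathbb{R}^n$ are exactly the algebraic sets; hence the relative Zariski-open subsets of $V$ are precisely the sets of the form $\mathbb{V}^c\cap V$ with $\mathbb{V}\subseteq\mathbb{R}^n$ algebraic. Throughout I will use that, for any $A\subseteq\mathbb{R}^n$, the complement of $A\cap V$ taken \emph{within} $V$ equals $A^c\cap V$, and that passing to this relative complement reverses inclusions of subsets of $V$.

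For the implication ``$\Rightarrow$'', I would start from an algebraic set $\mathbb{V}$ as furnished by Definition~\ref{def:rel_gen}, so that $S^c\cap V\subseteq\mathbb{V}\cap V$ and $\mathbb{V}^c\cap V$ is Euclidean dense in $V$, and simply set $O:=\mathbb{V}^c\cap V$. By the dictionary above, $O$ is relative Zariski open, and it is relative Euclidean dense in $V$ by hypothesis. It remains to check $O\subseteq S\cap V$: passing to relative complements in the inclusion $S^c\cap V\subseteq\mathbb{V}\cap V$ reverses it to $\mathbb{V}^c\cap V\subseteq S\cap V$, which is exactly $O\subseteq S\cap V$. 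Thus $O$ witnesses the right-hand side.

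For the implication ``$\Leftarrow$'', I would take a relative Zariski open, relative Euclidean dense $O\subseteq S\cap V$ and write $O=\mathbb{V}^c\cap V$ for some algebraic $\mathbb{V}$, again via the dictionary. The density condition of Definition~\ref{def:rel_gen} is then immediate, since $\mathbb{V}^c\cap V=O$ is dense in $V$ by assumption. For the inclusion condition, I would use $O\subseteq S$, hence $S^c\subseteq O^c$, so that $S^c\cap V\subseteq O^c\cap V=\mathbb{V}\cap V$, the last equality being the relative-complement identity applied to $O=\mathbb{V}^c\cap V$. This establishes both defining properties of relative genericity, completing the equivalence.

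The argument is essentially bookkeeping, so I expect no genuine obstacle; the only point requiring care is to keep complements taken within $V$ distinct from complements taken in the ambient $\mathbb{R}^n$, and to invoke correctly that a set is relative Zariski open in $V$ precisely when it is the restriction to $V$ of a Zariski-open subset of $\mathbb{R}^n$, i.e.\ the complement of an ambient algebraic set intersected with $V$. Once that correspondence is fixed, each direction follows from a single application of complementation in $V$.
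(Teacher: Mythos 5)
Your proof is correct. The paper itself offers no proof of this lemma (it is quoted from~\cite[Lemma 2.1]{IlchKirc22}), but your argument is precisely the relative-topology version of the complementation argument the paper uses for the non-relative analogue, Lemma~\ref{lem:gen_characterisation} --- identify relative Zariski-open subsets of~$V$ with sets~$\mathbb{V}^c\cap V$ for algebraic~$\mathbb{V}$, then observe that complementation within~$V$ turns the inclusion~$S^c\cap V\subseteq\mathbb{V}\cap V$ into~$\mathbb{V}^c\cap V\subseteq S\cap V$ and vice versa --- so it matches the intended approach.
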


In view of Lemma~\ref{lem:gen_characterisation}, relative genericity and genericity in the sense of Definition~\ref{def:gen} coincide for~$V = \mathbb{R}^n$. Moreover, when the reference set~$V$ admits enough structure (e.g. if~$V$ is an analytic submanifold with countable atlas), then the properties of the relative Zariski topology can be invoked to conclude that every relative generic set is conull w.r.t.\ a Lebesgue-type measure, see~\cite[Proposition 2.8]{IlchKirc22}. Thus, the authors prefer this concept of relative genericity over an adaptation of the purely topological concept of definition~\ref{def:gen_topo} w.r.t.\ the relative Euclidean topology on~$V$.

Ilchmann and Kirchhoff~\cite{IlchKirc22} have collected the results needed in the following considerations. We recall the most important ones here.

\begin{Proposition}[{see~\cite[Proposition 2.3]{IlchKirc22}}]\label{prop:properties_rel_gen}
Let~$S_1,S_2,V,V'\subseteq\mathbb{R}^n$,~$n\in\mathbb{N}^*$.\\[-4ex]
\begin{enumerate}[(a)]
\item If~$S_1$ is relative generic in~$V$, then~$S_1\cap V$ contains some open, dense set (in the relative Euclidean topology) and therefore~$S_1^c\cap V$ is nowhere dense in~$V$. The converse, however, is in general not true.
\item If~$S_1$ is relative generic in~$V$ and~$S_1\subseteq S_2$, then~$S_2$ is also relative generic in~$V$.
\item If~$S_1$ and~$S_2$ are relative generic in~$V$, then~$S_1\cap S_2$ and~$S_1\cup S_2$ are relative generic in~$V$.
\item 
If~$S_1$ is relative generic in~$V$ and~$S_3\subseteq\mathbb{R}^m$,~$m\in\mathbb{N}^*$, is relative generic in~$U\subseteq\mathbb{R}^m$, then~$S_1\times S_3$ is relative generic in~$V\times U$.
\item 
If~$V'\subseteq V$ and~$V'$ is relative generic in~$V$, 
then~$S_1$ is relative generic in~$V$ if, and only if, 
$S_1$ is relative generic in~$V'$. 
\item
If~$S_1$ is relative generic in~$V$, then~$S_1^c$ is not relative generic in~$V$.
\item If~$V$ is open, then~$S_1$ is relative generic in~$V$ if, and only if,~$S_1$ is generic.
\end{enumerate}
\end{Proposition}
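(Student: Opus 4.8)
The plan is to use Lemma~\ref{lem:rel_gen_char} as the main tool and to encode ``$S$ relative generic in $V$'' by a \emph{witness}: a set $O\subseteq S\cap V$ that is relative Zariski open and relative Euclidean dense in $V$. I would rely repeatedly on three elementary facts: (I) the Zariski topology is coarser than the Euclidean one, so every relative Zariski open set is relative Euclidean open; (II) finite unions and intersections of algebraic sets are algebraic (the union of two algebraic sets is cut out by the products of the defining polynomials, the intersection by the union of the defining families), hence finite intersections of relative Zariski open sets are relative Zariski open; and (III) in any topological space a finite intersection of open dense sets is open dense, and density is transitive. Granting these, part~(b) is immediate, since a witness $O$ for $S_1$ satisfies $O\subseteq S_1\cap V\subseteq S_2\cap V$ and thus witnesses $S_2$. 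For part~(a), a witness $O\subseteq S_1\cap V$ is Euclidean open and dense in $V$ by~(I), so $S_1^c\cap V\subseteq V\setminus O$ is nowhere dense; to see that the converse can fail I would take $V=\mathbb{R}$ and $S_1=\mathbb{R}\setminus\set{1/k\mid k\in\mathbb{N}^*}$, whose complement is nowhere dense but infinite, hence not contained in any proper algebraic subset of $\mathbb{R}$, so that $S_1$ is not generic and therefore, as $V=\mathbb{R}^n$, not relative generic in $V$.

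For part~(c) the union case follows from (b) because $S_1\subseteq S_1\cup S_2$, while for the intersection I would take witnesses $O_1,O_2$ and put $O:=O_1\cap O_2$; by~(II) it is relative Zariski open, by~(III) it is relative Euclidean dense in $V$, and $O\subseteq(S_1\cap S_2)\cap V$. For part~(d) I would take witnesses $O_1=W_1\cap V$ and $O_3=W_3\cap U$ with $W_1,W_3$ Zariski open and verify that $O_1\times O_3$ witnesses relative genericity of $S_1\times S_3$ in $V\times U$: it is relative Zariski open because $W_1\times W_3$ is Zariski open in $\mathbb{R}^{n+m}$ (its complement equals $(W_1^c\times\mathbb{R}^m)\cup(\mathbb{R}^n\times W_3^c)$, a union of two algebraic sets), and it is Euclidean dense in $V\times U$ because a product of dense sets is dense in the product topology, which coincides with the Euclidean topology on $\mathbb{R}^{n+m}$.

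Part~(e) is the first genuinely delicate step. From $V'$ relative generic in $V$ I obtain, by~(a), that $V'$ is Euclidean dense in $V$, together with a witness $O'=W'\cap V\subseteq V'$ that is relative Zariski open and dense in $V$. For the forward implication, writing a witness of $S_1$ in $V$ as $O=W\cap V$, I would intersect: then $O\cap O'\subseteq S_1\cap V'$, it equals $(W\cap W')\cap V'$ and so is relative Zariski open in $V'$, and it is dense in $V'$ because it is dense in $V$ by~(III) while being contained in $V'$. For the converse I would combine a witness $O=W\cap V'\subseteq S_1\cap V'$ with $O'$ to form $W\cap O'\subseteq S_1\cap V$, which is dense in $V'$ (an intersection of two relative open dense subsets of $V'$) and therefore, since $V'$ is dense in $V$, dense in $V$ by transitivity. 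The obstacle here is purely the careful bookkeeping of the nested reference sets and the repeated use that a subset of $V'$ which is dense in $V$ is automatically dense in $V'$.

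Part~(f) follows from (c): were both $S_1$ and $S_1^c$ relative generic in $V$, then so would be $S_1\cap S_1^c=\emptyset$, forcing the empty set to contain a witness dense in $V$, which is absurd for $V\neq\emptyset$ (the case $V=\emptyset$ being vacuous). For part~(g) I would argue through Lemma~\ref{lem:gen_characterisation} and Lemma~\ref{lem:rel_gen_char}. If $S_1$ is generic it contains a nonempty Zariski open set $W$; then $W\cap V$ is relative Zariski open and, as $W$ is Euclidean dense and $V$ is open, dense in $V$, so it witnesses relative genericity in $V$. The reverse implication is the step I expect to be the main obstacle: from a witness $O=W\cap V\subseteq S_1$ one must extract a \emph{nonempty Zariski open} subset of $S_1$ in all of $\mathbb{R}^n$, and it is exactly here that the openness of $V$ has to be exploited to pass from the relative witness to a genuine Zariski open set. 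I would concentrate my effort on this passage, since the other six parts reduce to the routine manipulation of witnesses described above.
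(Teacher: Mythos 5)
The paper itself contains no proof of this proposition: it is stated with a citation to \cite[Proposition~2.3]{IlchKirc22} and never proved internally, so your attempt can only be judged on its own merits. Your witness formulation via Lemma~\ref{lem:rel_gen_char} is the right framework, and parts (a)--(f) are correct: the counterexample in (a) works (proper algebraic subsets of $\mathbb{R}$ are finite, so the infinite, nowhere dense set $\{1/k \mid k\in\mathbb{N}^*\}$ is contained in none of them); the intersection, union, product and transitivity manipulations in (b)--(e) are sound, including the delicate bookkeeping in (e), where your two candidate witnesses $W\cap O'$ and $O\cap O'$ indeed coincide precisely because $O'\subseteq V'$; and (f) follows from (c) whenever $V\neq\emptyset$. (A pedantic remark on (f): the case $V=\emptyset$ is not ``vacuous'' but is in fact a counterexample to the literal statement, since every set is relative generic in $\emptyset$; the statement tacitly assumes $V\neq\emptyset$.)

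The genuine gap is the reverse implication of (g), which you explicitly deferred (``the step I expect to be the main obstacle''). This is not a step you could have completed, because as literally stated it is false. Take $S_1=V=\mathbb{B}(0,1)$: choosing $\mathbb{V}=\emptyset$ in Definition~\ref{def:rel_gen} (so that $S_1^c\cap V=\emptyset\subseteq\mathbb{V}\cap V$ and $\mathbb{V}^c\cap V=V$ is dense in $V$) shows that $S_1$ is relative generic in the open set $V$; yet $S_1$ is not generic, since $S_1^c$ has interior points and therefore lies in no proper algebraic set. The underlying reason is that relative genericity constrains $S_1$ only inside $V$, whereas genericity constrains $S_1^c$ globally, so no argument can ``pass from the relative witness to a genuine Zariski open set'' without further information about $S_1\setminus V$. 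The correct equivalence for nonempty open $V$ is: $S_1$ is relative generic in $V$ if, and only if, $S_1\cup V^c$ is generic. Indeed, for nonempty open $V$ the density requirement on $\mathbb{V}^c\cap V$ is equivalent to $\mathbb{V}$ being a proper algebraic set, and then the definition collapses to requiring that $(S_1\cup V^c)^c=S_1^c\cap V$ be covered by a proper algebraic set. Your forward argument (generic $\Rightarrow$ relative generic in open $V$) is correct, and it is the only direction the present paper ever invokes, namely in the proof of Lemma~\ref{lem:rel_open_reference_set}; but as a proof of (g) as stated, the attempt has an unfillable hole rather than a merely missing computation.
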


The statement~(g) implies that relative genericity in open reference sets and genericity are equivalent. When considering relative open subsets of a given reference set~-- which is not necessarily~$\mathbb{R}^n$, as it was in~(g)~-- only one implication remains true.

\begin{Lemma}\label{lem:rel_open_reference_set}
Let~$V,\widetilde{V}\subseteq\mathbb{R}^n$ so that~$\widetilde{V}\subseteq V$ is relative Euclidean open. If~$S\subseteq\mathbb{R}^n$ is relative generic in~$V$, then~$S$ is relative generic in~$\widetilde{V}$. The converse  fails in general. 
\end{Lemma}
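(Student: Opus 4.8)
The plan is to prove the asserted implication straight from the characterisation in Lemma~\ref{lem:rel_gen_char} and to refute the converse with an explicit two-point reference set; the entire content reduces to a single density argument in which the relative openness of $\widetilde V$ is exactly the hypothesis that makes everything work.

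For the forward direction, assume $S$ is relative generic in $V$. By Lemma~\ref{lem:rel_gen_char} there is a set $O\subseteq V$ that is relative Zariski open in $V$, relative Euclidean dense in $V$, and contained in $S\cap V$. I would show that $O\cap\widetilde V$ certifies relative genericity of $S$ in $\widetilde V$ through the same lemma. Writing $O=V\cap W$ with $W\subseteq\mathbb R^n$ Zariski open and using $\widetilde V\subseteq V$ gives $O\cap\widetilde V=\widetilde V\cap W$, so $O\cap\widetilde V$ is relative Zariski open in $\widetilde V$; moreover $O\cap\widetilde V\subseteq(S\cap V)\cap\widetilde V=S\cap\widetilde V$. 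The only point requiring the hypothesis is that $O\cap\widetilde V$ is relative Euclidean dense in $\widetilde V$.

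For that density step I would use the standard fact that the intersection of a dense set with an open set is dense in that open set. Let $U$ be nonempty and relatively open in $\widetilde V$; since $\widetilde V$ is relatively open in $V$, transitivity of the subspace topology makes $U$ relatively open (and nonempty) in $V$. Density of $O$ in $V$ gives $O\cap U\neq\emptyset$, and $U\subseteq\widetilde V$ forces $O\cap U=(O\cap\widetilde V)\cap U$, so $O\cap\widetilde V$ meets every nonempty relatively open $U\subseteq\widetilde V$. Applying Lemma~\ref{lem:rel_gen_char} in $\widetilde V$ then concludes this direction. (One could equally run this on the defining algebraic set $\mathbb V$ of Definition~\ref{def:rel_gen}; the computation is the same.)

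To refute the converse I would take $n=1$, the two-point set $V=\{0,1\}$, its relatively open singleton $\widetilde V=\{0\}=V\cap(-\tfrac12,\tfrac12)$, and $S=\{0\}$. Then $S$ is relative generic in $\widetilde V=\{0\}$, witnessed by the (algebraic) empty set $\mathbb V=\emptyset$, since $S^c\cap\{0\}=\emptyset$ and $\mathbb V^c\cap\{0\}=\{0\}$ is trivially dense in $\{0\}$. But $S$ is not relative generic in $V$: an admissible algebraic set must contain $S^c\cap V=\{1\}$, and since both singletons are clopen in the discrete space $V$, the relative complement of any such set inside $V$ is $\{0\}$ or $\emptyset$, neither dense in $V$. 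This is precisely the ``discrete set with at least two points'' obstruction noted before Definition~\ref{def:rel_gen}. The main (and only) obstacle is thus the density step, and the counterexample confirms that relative openness of $\widetilde V$ cannot be dropped: passing to a non-open piece of $V$ can destroy the density that underlies relative genericity.
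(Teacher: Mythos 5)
Your proof is correct. For the implication, your argument and the paper's are the same in substance: both reduce to the fact that a dense subset of~$V$ stays dense in any relatively open~$\widetilde V\subseteq V$. You phrase this directly through Lemma~\ref{lem:rel_gen_char}, transporting the relative Zariski-open dense witness~$O$ into~$\widetilde V$ and invoking transitivity of the subspace topology, whereas the paper works with the algebraic set~$\mathbb{V}$ of Definition~\ref{def:rel_gen} and argues by contradiction: a relative inner point of~$\mathbb{V}\cap\widetilde V$ in~$\widetilde V$ would, by relative openness of~$\widetilde V$, be a relative inner point of~$\mathbb{V}\cap V$ in~$V$, contradicting density of~$\mathbb{V}^c\cap V$. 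These are two renderings of one argument (as you yourself note in your parenthetical remark); yours is more economical, the paper's is self-contained with explicit~$\varepsilon$-ball manipulations. Where you genuinely diverge is the counterexample: you take the discrete two-point set~$V=\set{0,1}\subseteq\mathbb{R}$ with~$\widetilde V=S=\set{0}$, while the paper takes the connected set~$V=\mathbb{B}(0,1)\cup\left(\mathbb{R}\times\set{0}\right)\subseteq\mathbb{R}^2$ with~$\widetilde V=\mathbb{B}(0,1)$ and~$S=\mathbb{R}^2\setminus\left(\mathbb{R}\times\set{0}\right)$, using Proposition~\ref{prop:properties_rel_gen}\,(g) to certify relative genericity in the open ball. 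Your example is valid and simpler~-- it is exactly the disconnected-reference-set obstruction that the paper itself records immediately after its proof as ``an even simpler counter-example''~-- whereas the paper's choice demonstrates that the converse fails even for connected reference sets, so the failure is not merely an artifact of disconnectedness.
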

\begin{proof}
Let~$S$ be relative generic in~$V$. By Definition~\ref{def:rel_gen} there is some algebraic set~$\mathbb{V}\subseteq\mathbb{R}^n$ so that~$S^c\cap V\subseteq\mathbb{V}\cap V$ and~$\mathbb{V}^c\cap V$ is relative Euclidean dense in~$V$. Since~$\widetilde{V}$ is a subset of~$V$, we see that
\begin{align*}
S^c\cap\widetilde{V} = S^c\cap V\cap\widetilde{V} \subseteq \mathbb{V}\cap V\cap\widetilde{V} = \mathbb{V}\cap\widetilde{V}.
\end{align*}
It remains to show that~$\mathbb{V}^c\cap \widetilde{V}$ is relative Euclidean dense in~$\widetilde{V}$. Seeking a contradiction, assume that~$\mathbb{V}\cap \widetilde{V}$ contains an inner point~$x_0\in \mathbb{V}\cap \widetilde{V}$ w.r.t.~the relative Euclidean topology on~$\widetilde{V}$, i.e.
\begin{align*}
\exists\varepsilon>0: \quad \mathbb{B}(x_0,\varepsilon)\cap\widetilde{V}\subseteq\mathbb{V}\cap \widetilde{V}.
\end{align*}
Since~$\widetilde{V}$ is relative Euclidean open in~$V$, we have
\begin{align*}
\exists\varepsilon'\in (0,\varepsilon):\quad \mathbb{B}(x_0,\varepsilon')\cap V\subseteq\widetilde{V}.
\end{align*}
Since~$\varepsilon'<\varepsilon$, we have~$\mathbb{B}(x_0,\varepsilon')\cap V\subseteq \mathbb{B}(x_0,\varepsilon)$ and hence
\begin{align*}
\mathbb{B}(x_0,\varepsilon')\cap V\subseteq \mathbb{B}(x_0,\varepsilon)\cap\widetilde{V}\subseteq\mathbb{V}\cap \widetilde{V}\subseteq\mathbb{V}\cap V.
\end{align*}
Thus~$x_0$ is an inner point of~$\mathbb{V}\cap V$ w.r.t.~the relative Euclidean topology on~$V$~-- a contradiction to density of~$\mathbb{V}^c\cap V$ in~$V$ which is guaranteed by Definition~\ref{def:rel_gen}.

To see that the converse implication fails in general, consider the reference set~$V := \mathbb{B}(0,1)\cup\mathbb{R}\times\set{0}\subseteq\mathbb{R}^2$. The open ball~$\mathbb{B}(0,1)$ is a relative Euclidean open subset of~$V$. Further,~$\mathbb{B}(0,1)$ is open and hence by Proposition~\ref{prop:properties_rel_gen}\,(g) the set  $\mathbb{R}^2\setminus\mathbb{R}\times\set{0}$ is relative generic in~$\mathbb{B}(0,1)$. However,~$\mathbb{R}^2\setminus\mathbb{R}\times\set{0}$ is  not Euclidean dense in~$V$ and thus especially not relative generic in~$V$.
\end{proof}

An even simpler counter-example would be a disconnected reference set: Each connected component of any set~$V$ is relative generic in itself, but not relative generic in~$V$ provided that~$V$ possesses at least two connected components.

So far, we have considered genericity in the sense of Wonham only in the real coordinate space, a restriction that seems odd. However, one should be careful when extending Wonham's original definition (or the small adaptation towards arbitrary reference sets) to the complex case. Naively, the authors would try to use the complex Zariski topology (defined as in the real case but with complex polynomials) and get a reasonable concept. It should be kept in mind that the complex coordinate space~$\mathbb{C}^n$ has naturally a real vector space structure and is isomorphic to~$\mathbb{R}^{2n}$. When applying Wonham's genericity to the~$\mathbb{R}^{2n}$ representation of~$\mathbb{C}^n$,  we must be aware that there are~$2n$-dimensional real algebraic sets in~$\mathbb{C}^n$ that can only contain complex algebraic sets up to dimension~$n$. We must therefore distinguish between the complex genericity and the weaker concept of real genericity that is induced by any isomorphism between~$\mathbb{C}^n$ and~$\mathbb{R}^{2n}$. From the Euclidean topological and Lebesgue measure theoretic point of view, there seems to be no reason to prefer either of these concepts. For simplicity, however, we will use the real genericity for the complex coordinate space; a formal definition is the following.

\begin{Definition}\label{def:complex}
Let~$n\in\mathbb{N}^*$,~$S,V\in\mathbb{C}^n$ and~$\varphi:\mathbb{C}^n\to\mathbb{R}^{2n}$ a real vector space isomorphism. We call~$S$ \textit{relative generic} in~$V$ if, and only if,~$\varphi(S)$ is relative generic in~$\varphi(V)$. 
\end{Definition}

We stress that we will be very informal when identifying~$\mathbb{C}^n$ with~$\mathbb{R}^{2n}$ and equipping the former with its real Zariski topology. Since isomorphisms are bijective linear maps and especially polynomial vectors, Definition~\ref{def:complex} is independent of the explicit choice of~$\varphi$. Therefore we do not  state the used isomorphism explicitly. Since we have defined relative genericity in the complex coordinate space via its representation as real coordinate space, the results of Proposition~\ref{prop:properties_rel_gen} hold true for~$\mathbb{C}$ instead of~$\mathbb{R}$.

Before we proceed with our main results, we need an additional lemma on relative genericity.

\begin{Lemma}\label{lem:convex_reference_sets}
Let~$n\in\mathbb{N}^*$ and~$V\subseteq\mathbb{R}^n$ be convex and non-empty. Then~$S\subseteq \mathbb{R}^n$ is relative generic in~$V$ if, and only if, there is some Zariski open set~$O$ so that~$O\cap V\neq\emptyset$ and~$O\cap V\subseteq S\cap V$. 
\end{Lemma}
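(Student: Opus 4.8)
The plan is to reduce both implications to the characterization of relative genericity via relative Zariski open, relative Euclidean dense subsets supplied by Lemma~\ref{lem:rel_gen_char}, and to isolate the one place where convexity of $V$ is genuinely needed: upgrading the condition ``$O\cap V\neq\emptyset$'' to ``$O\cap V$ is relative Euclidean dense in $V$''. The forward implication requires essentially no work. Assuming $S$ is relative generic in $V$, Lemma~\ref{lem:rel_gen_char} yields a relative Zariski open, relative Euclidean dense set $O'\subseteq V$ with $O'\subseteq S\cap V$. By definition of the relative Zariski topology there is a Zariski open $O\subseteq\mathbb{R}^n$ with $O'=O\cap V$, so $O\cap V\subseteq S\cap V$; and since $O\cap V=O'$ is Euclidean dense in the nonempty set $V$, it is itself nonempty. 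This produces the desired $O$ and, notably, uses neither convexity nor any estimate.

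For the converse I would start from a Zariski open $O$ with $O\cap V\neq\emptyset$ and $O\cap V\subseteq S\cap V$. The trace $O\cap V$ is automatically relative Zariski open in $V$ and sits inside $S\cap V$, so by Lemma~\ref{lem:rel_gen_char} it suffices to prove that $O\cap V$ is relative Euclidean dense in $V$; relative genericity of $S$ in $V$ follows immediately. Writing $\mathbb{V}:=O^c$ as an algebraic set $\mathbb{V}=\bigcap_{i=1}^k p_i^{-1}(\{0\})$ with $p_1,\dots,p_k\in\mathbb{R}[x_1,\dots,x_n]$, the goal becomes to show that every $y\in V$ lies in the Euclidean closure of $O\cap V$.

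The key step, and the only point where convexity enters, is a segment argument. I would fix once and for all a point $x_0\in O\cap V$ and take an arbitrary target $y\in V$. By convexity the whole segment $\gamma(t):=x_0+t(y-x_0)$, $t\in[0,1]$, remains in $V$. Since $x_0\in O=\mathbb{V}^c$, some defining polynomial $p_{i_0}$ satisfies $p_{i_0}(x_0)\neq 0$, so the univariate polynomial $q(t):=p_{i_0}(\gamma(t))$ has $q(0)\neq 0$, is therefore not identically zero, and thus has only finitely many zeros in $[0,1]$. For every $t$ outside this finite set, $p_{i_0}(\gamma(t))\neq 0$ forces $\gamma(t)\in O\cap V$; letting $t\uparrow 1$ through such values gives points of $O\cap V$ converging to $y$. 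As $y\in V$ was arbitrary, $O\cap V$ is relative Euclidean dense in $V$, which closes the argument.

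The only real obstacle is this density step; everything else is bookkeeping with Lemma~\ref{lem:rel_gen_char}. The subtlety worth stressing is that relative Zariski openness never implies relative Euclidean density for arbitrary reference sets (Lemma~\ref{lem:rel_open_reference_set} and the disconnected-set remark exhibit failures), so convexity must be used essentially: it is exactly what guarantees that the segment from the single ``good'' point $x_0$ to any $y$ stays in $V$, letting the one-variable polynomial $q$ transport the nonvanishing of $p_{i_0}$ along the segment. I would only double-check the harmless edge cases $O\cap V=V$ (density trivial) and segments meeting $\mathbb{V}$, where $q\not\equiv 0$ bounds the exceptional parameters regardless.
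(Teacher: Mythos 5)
Your proof is correct, but the converse direction follows a genuinely different and more elementary route than the paper's. The paper first reduces, without loss of generality, to the case where $V$ is full-dimensional (translating, invoking \cite[Lemma III.2]{Kirc21pp} to pass from $\mathrm{span}\,V$ to $\mathbb{R}^n$, and exhibiting a simplex inside $V$ to get $\mathrm{int}\,V\neq\emptyset$), then shows every point of $V$ has interior points of $V$ arbitrarily close, and finally argues by contradiction: a relative inner point of $O^c\cap V$ would yield a genuine inner point of the algebraic set $O^c$, contradicting the fact (via \cite[p.\,240]{Fede69}) that a proper algebraic set is a Lebesgue nullset and hence has empty interior. Your segment argument replaces all of this machinery: writing $O^c=\bigcap_{i=1}^k p_i^{-1}(\set{0})$, fixing $x_0\in O\cap V$ with $p_{i_0}(x_0)\neq 0$, and restricting $p_{i_0}$ to the segment from $x_0$ to an arbitrary $y\in V$ (which stays in $V$ by convexity) gives a univariate polynomial $q$ with $q(0)\neq 0$, hence with finitely many zeros, so points of $O\cap V$ accumulate at $y$. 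This is self-contained (no measure theory, no external reduction lemma, no case distinction for a singleton $V$) and isolates exactly where convexity is used, namely that the segment stays in $V$; what the paper's approach buys in exchange is a pair of structural facts about convex reference sets (full-dimensionality after reduction, density of interior points) that are of independent use elsewhere in its framework, but for this lemma your route is shorter and cleaner. One small point worth making explicit if you write this up: the finiteness of the defining family $p_1,\ldots,p_k$ is guaranteed by the paper's Definition~\ref{def:gen} of algebraic sets, so the selection of a single $p_{i_0}$ that is nonzero at $x_0$ is legitimate.
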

\begin{proof}
``$\implies$'': \ Let~$S$ be relative generic in~$V$. In view of Lemma~\ref{lem:rel_gen_char}, there is some relative Zariski-open, relative Euclidean dense set~$\widetilde{O}\subseteq V$ so that~$\widetilde{O}\subseteq S\cap V$. By definition of the relative Zariski-topology, there is some Zariski open set~$O\subseteq\mathbb{R}^n$ so that~$\widetilde{O} = O\cap V$. Hence, the inclusion~$O\cap V \subseteq S\cap V$ holds true. Since~$V$ is non-empty and~$\widetilde{O}$ is an Euclidean dense subset of~$V$,~$\widetilde{O}$ is non-empty. This yields~$O\neq\emptyset$.

\noindent
``$\impliedby$'':  \ 
Let~$O\subseteq\mathbb{R}$ be a Zariski-open set so that
\begin{align*}
    O\cap V\neq\emptyset\quad\text{and}\quad O\cap V\subseteq S\cap V.
\end{align*}
If~$V$ contains only one point, then we conclude from
\begin{align*}
\emptyset\neq O\cap V\subseteq S\cap V\subseteq V
\end{align*}
that~$S\cap V$ is a non-empty subset of~$V$ and thus~$S\cap V = V$. Therefore,~$S$ is relative generic in~$V$. In the following, we consider the case that~$V$ contains at least two (and hence, due to convexity, uncountably many) points. We split the proof into steps.

\noindent
\textsc{Step 1:} \quad We show that we can, without loss of generality,
assume that~$V$ has non-empty interior and spans~$\mathbb{R}^n$. Since relative genericity is by Definition~\ref{def:rel_gen} invariant under simultaneous translations of~$S$ and~$V$, we may  assume , without loss of generality, that~$0\in V$. Moreover, in view of Lemma~\ref{lem:rel_gen_char} it can be readily seen that~$S$ is relative generic in~$V$ if, and only if,~$S\cap V$ is relative generic in~$V$. Thus, we may   assume  without loss of generality that~$S\subseteq V$. In~\cite[Lemma III.2]{Kirc21pp}, it was shown that when considering relative genericity w.r.t.~a reference set that is contained in a linear subspace~$V\subseteq\mathbb{R}^n$, it can,  without loss of generality, be assumed that~$V = \mathbb{R}^n$. Hence, we can   assume  without loss of generality that~$\mathbb{R}^n$ and~$\mathrm{span}\,V$, the linear span of~$V$, coincide. In this case, it is well-known that~$V$ contains a basis~$(b_1,\ldots,b_n)\in V^n$ of~$\mathbb{R}^n$. By convexity of~$V$, we conclude that the convex hull of the set~$\set{0,b_1,\ldots,b_n}$, that is
\begin{align*}
C := \mathrm{conv}(\set{0,b_1,\ldots,b_n}) = \set{\sum_{i = 1}^n\lambda_i b_i\,\left\vert\,\lambda_1,\ldots,\lambda_n\in\mathbb{R}_{\geq 0}, \sum_{i = 1}^n\lambda_i\leq 1\right.}
\end{align*}
is contained in~$V$. Since the boundary of an~$n$-simplex is the union of its lower-dimensional faces,~$v_0 = \sum_{i = 1}^n\frac{1}{n+1}b_i$ 
is an inner point of~$C$ and hence the interior of~$V$,~$\mathrm{int}\,V$, is indeed non-empty.
\\

\noindent
\textsc{Step 2:}\quad We show
\begin{align}\label{eq:interior_convex}
\forall v\in V~\forall\varepsilon>0: \mathbb{B}(v,\varepsilon)\cap \mathrm{int}\,V \neq\emptyset.
\end{align}
Let~$v\in V$. Since~$v_0$ from Step\,1 is an inner point of~$V$, there is some~$\varepsilon_0>0$ so that~$\mathbb{B}(v_0,\varepsilon_0)\subseteq V$. Choose~$v_1,\ldots,v_n\in\mathbb{B}(v_0,\varepsilon_0)$ so that~$(v,v_1,\ldots,v_n)$ is affinely independent, i.e.~$(v_1-v,\ldots,v_n-v)$ is a basis of~$\mathbb{R}^n$.
Invoking convexity of~$V$, we see that
\begin{align*}
\widetilde{C} := \mathrm{conv}(\set{v,v_1,\ldots,v_n}) = \set{\left.\lambda v + \sum_{i = 1}^n\lambda_iv_i\,\right\vert\,\lambda,\lambda_1,\ldots,\lambda_n\in\mathbb{R}_{\geq 0},\lambda+\sum_{i = 1}^n\lambda_i = 1}\subseteq V.
\end{align*}
Further, we conclude from our choice of~$v_1,\ldots,v_n$ that~$\widetilde{C}$ is an~$n$-dimensional simplex in an~$n$-dimensional space and thus
\begin{align*}
\forall \lambda,\lambda_1,\ldots,\lambda_n\in\mathbb{R}_{>0}~\text{with}~\lambda+\sum_{i = 1}^n\lambda_i = 1: \lambda v + \sum_{i = 1}^n\lambda_iv_i\in\mathrm{int}\,\widetilde{C}\subseteq\mathrm{int}\,V.
\end{align*}
Hence the property~\eqref{eq:interior_convex} holds indeed true.

\noindent\textsc{Step 3:}\quad 
We prove that~$S\subseteq V$ is relative generic in~$V$. 
In view of Lemma~\ref{lem:rel_gen_char}, we may equivalently show
 that~$S$ contains some relative Zariski-open relative Euclidean dense set~$\widehat{O}\subseteq V$. By assumption, there is some Zariski-open set~$O\subseteq\mathbb{R}^n$ so that~$O\cap V\subseteq S = S\cap V$ and~$O\cap V\neq\emptyset$. Thus,~$O\cap V$ is, by definition of the relative topology, a relative Zariski open subset of~$S$. Hence, it remains to prove that~$O\cap V$ is Euclidean dense in~$V$. Seeking a contradiction, assume that~$O^c\cap V$ contains a relative \textit{inner point}, i.e.
\begin{align*}
\exists  \ x\in O^c\cap V \ \exists \  \varepsilon>0 \ : \ 
\mathbb{B}(x,\varepsilon)\cap V\subseteq O^c\cap V.
\end{align*}
In view of~\eqref{eq:interior_convex}, there is some~$x'\in\mathbb{B}\left(x,\frac{\varepsilon}{2}\right)\cap \mathrm{int}\,V$. By the triangle inequality, we 
conclude
\begin{align*}
\mathbb{B}\left(x',\tfrac{\varepsilon}{2}\right)\cap V
 \ \subseteq \ 
 \mathbb{B}(x,\varepsilon)\cap V
 \ \subseteq \ 
  O^c\cap V.
\end{align*}
Since~$x'$ is an inner point of~$V$, there is 
some~$\varepsilon' \in    (0, \frac{\varepsilon}{2})$ so that 
\begin{align*}
\mathbb{B}(x',\varepsilon') \ \subseteq \ 
\mathbb{B}\left(x',\tfrac{\varepsilon}{2}\right)\cap V 
\ \subseteq \ 
 O^c\cap V.
\end{align*}
Thus,  $x'$ is an inner point of~$O^c\cap V$ and therefore an inner point of~$O^c$. Since~$O$ is Zariski-open,~$O^c$ is an algebraic set and since~$O$ is non-empty,~$O^c$ is, in view of~\cite[p.\,240]{Fede69},  a Lebesgue nullset. Since every nontrivial open ball has nonzero Lebesgue measure,~$O^c$ must have empty
 interior~-- a contradiction to the existence of a relative inner point of~$O^c\cap V$. This shows that~$S$ is indeed relative generic in~$V$.

\color{black}
\end{proof}

%_______________________________________________________________________
%_____________________________________________________________________
\section{Relative generic sets}\label{Sec:Interm_results}

The present section is an intermediate section.
We   prove those results on relative genericity which are needed for proving the main result
in section~\ref{Sec:Main_results-contr}. 
The main result under these lemmata is Proposition~\ref{lem:main_result_1};
it shown that various subsets of differential-algebraic systems~\eqref{eq:pH_DAE}
satisfiying an algebraic constraint associated to controllability/stabilizability
are (relative) generic sets  with respect to (semi-)dissipative or conservative port-Hamiltonian des\-criptor systems.

The most important tool in our analysis is the well-known concept of a minor.

\begin{Definition}\label{def:minor}
Let~$d,\ell,n\in\mathbb{N}^*$ so that~$d\leq\min\set{\ell,n}$. Let
\begin{align*}
\sigma:\mathbb{N}^*_{\leq d}\to\mathbb{N}^*_{\leq \ell}
\end{align*}
and
\begin{align*}
\pi:\mathbb{N}^*_{\leq d}\to\mathbb{N}^*_{\leq n}
\end{align*}
be strictly increasing, where we use the abbreviation~$\mathbb{N}^*_{\leq k} := \set{1,\ldots,k}$ for all~$k\in\mathbb{N}^*$. We consider either~$R = \mathbb{K}$ or~$R = \mathbb{K}[x]$. The function
\begin{align*}
M_{\sigma,\pi}:R^{\ell\times n}\to R,\qquad A\mapsto \det[A_{\sigma(i),\pi(j)}]_{i,j\in\mathbb{N}_{\leq d}^*}
\end{align*}
is called \textit{minor of order}~$d$ w.r.t.~$R^{\ell\times n}$. The minor of order~$0$ w.r.t.~$R^{\ell\times n}$ is the constant function~$A\mapsto 1$.
\end{Definition}

Recall the correspondence between lower bounds of the rank of matrices and minors.

\begin{Lemma}[{see~\cite[Section 3.3.6]{Fisc05b}}]\label{lem:important_property}
Let~$d\in\mathbb{N}_0$ and~$\ell,n\in\mathbb{N}^*$ with~$d\leq\min\set{\ell,n}$ and either~$R = \mathbb{K}$ or~$R = \mathbb{K}[x]$. A matrix~$A\in R^{\ell\times n}$ has rank at least~$d$ if, and only if, there is some minor of order~$d$ w.r.t.~$R^{\ell\times n}$ that does not vanish at~$A$.
\end{Lemma}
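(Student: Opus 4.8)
The plan is to reduce both admissible choices of~$R$ to a single statement over a field and then invoke the classical equivalence between rank and the size of the largest nonvanishing minor. For~$R = \mathbb{K}$ there is nothing to reduce. For~$R = \mathbb{K}[x]$, recall that the polynomial ring is an integral domain and hence embeds injectively into its field of fractions~$\mathbb{K}(x)$; the rank of~$A\in\mathbb{K}[x]^{\ell\times n}$ is understood as the rank of~$A$ viewed over~$\mathbb{K}(x)$ (as in the characterisations of Proposition~\ref{Def-Prop:Contr}), and a~$d\times d$ determinant computed in~$\mathbb{K}[x]$ is the zero polynomial if, and only if, it vanishes as an element of~$\mathbb{K}(x)$, by injectivity of the embedding. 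Thus in either case it suffices to prove: for a field~$F$ and~$A\in F^{\ell\times n}$, one has~$\rk_F A\geq d$ if, and only if, some~$d\times d$ submatrix of~$A$ has nonzero determinant. The case~$d = 0$ is immediate, since by Definition~\ref{def:minor} the minor of order~$0$ is the constant function~$A\mapsto 1$ and every matrix has nonnegative rank; so I would assume~$d\geq 1$ from here on.

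First I would prove the implication ``$\Longleftarrow$''. Suppose~$M_{\sigma,\pi}(A)\neq 0$ for some strictly increasing~$\sigma,\pi$. Then the~$d\times d$ submatrix~$[A_{\sigma(i),\pi(j)}]_{i,j}$ is invertible over~$F$ and hence has rank~$d$. Deleting rows cannot increase the number of linear relations among the retained columns, so the~$d$ columns of~$A$ indexed by~$\pi$ are already linearly independent in~$F^\ell$. Consequently~$\rk_F A\geq d$.

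Next the implication ``$\Longrightarrow$''. Assuming~$\rk_F A\geq d$, the column rank of~$A$ is at least~$d$, so I may select a strictly increasing~$\pi:\mathbb{N}^*_{\leq d}\to\mathbb{N}^*_{\leq n}$ such that the columns~$A_{\cdot,\pi(1)},\ldots,A_{\cdot,\pi(d)}$ are linearly independent; the matrix~$A' := [A_{\cdot,\pi(j)}]_j\in F^{\ell\times d}$ then has rank~$d$. Since row rank equals column rank, $A'$ possesses~$d$ linearly independent rows, so there is a strictly increasing~$\sigma:\mathbb{N}^*_{\leq d}\to\mathbb{N}^*_{\leq\ell}$ for which~$[A_{\sigma(i),\pi(j)}]_{i,j}$ is a~$d\times d$ matrix of rank~$d$, hence invertible, hence of nonzero determinant. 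This determinant equals~$M_{\sigma,\pi}(A)\neq 0$, a minor of order~$d$ that does not vanish at~$A$.

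The only genuine subtlety is the polynomial case: one must ensure that both ``rank'' and ``nonvanishing'' are read over the field of fractions~$\mathbb{K}(x)$ rather than over~$\mathbb{K}[x]$ itself, which is exactly what the reduction in the first paragraph secures. The remaining field-theoretic content is the standard fact that row rank equals column rank, which I would cite rather than reprove.
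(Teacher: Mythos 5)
Your proof is correct. The paper does not prove this lemma at all --- it is quoted directly from Fischer's textbook, \cite[Section 3.3.6]{Fisc05b} --- so there is no internal argument to compare against; what you wrote is the standard proof: the field case via row rank $=$ column rank, plus the reduction of the case $R = \mathbb{K}[x]$ to the field of fractions $\mathbb{K}(x)$, which is indeed the only point where the statement goes beyond a field-based textbook theorem and is consistent with how the paper reads ranks such as $\rk_{\mathbb{K}(x)}$ in Proposition~\ref{Def-Prop:Contr} and Proposition~\ref{lem:main_result_1}.

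One phrasing slip, though not a gap: in the ``$\Longleftarrow$'' direction, the sentence ``deleting rows cannot increase the number of linear relations among the retained columns'' has the direction backwards --- deleting rows can only create linear relations, never destroy them. What your argument actually needs (and what makes the conclusion valid) is the restriction in the other direction: any linear relation $\sum_j c_j A_{\cdot,\pi(j)} = 0$ among the full columns restricts to the relation $\sum_j c_j A_{\sigma(i),\pi(j)} = 0$ among the rows retained by $\sigma$, so invertibility of the submatrix forces all $c_j$ to vanish and hence the full columns indexed by $\pi$ are independent.
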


Motivated by the algebraic characterizations of controllability and stabilizability in Proposition~\ref{Def-Prop:Contr} and~\ref{Prop:Stab}, we make use of Lemma~\ref{lem:important_property} to construct algebraic sets defined by the rank of certain polynomial matrices. This is possible since determinants and thus minors are in particular polynomials. 

\begin{Lemma}\label{lem:minors_are_polynomials}
Let~$d,g\in\mathbb{N}_0$ and~$\ell,n\in\mathbb{N}^*$ with~$d\leq\min\set{\ell,n}$. Let~$\widetilde{M}$ be a minor of order~$d$ w.r.t.~$\mathbb{K}[x]^{\ell\times n}$ and consider the induced function
\begin{align*}
M:\left(\mathbb{K}^{\ell\times n}\right)^{g+1}\to\mathbb{K}[x],\qquad (P_0,\ldots,P_g)\mapsto\widetilde{M}\left(\sum_{i = 0}^gP_i x^i\right).
\end{align*}
Then there are multivariate polynomials~$M_0,\ldots,M_{dg}\in\mathbb{K}[x_1,\ldots,x_{\ell n(g+1)}]$ 
so that
\begin{align}\label{eq:formula_minors}
\forall (P_0,\ldots,P_g)\in\left(\mathbb{K}^{\ell\times n}\right)^{g+1}: M(P_0,\ldots,P_g) = \sum_{i = 0}^{dg} M_i(P_0,\ldots,P_g)x^i.
\end{align}
In particular, 
\begin{align}\label{eq:particular_minors}
M_0(P_0,\ldots,P_g) = \widetilde{M}(P_0)\quad\text{and}\quad M_{gd}(P_0,\ldots,P_g) = \widetilde{M}(P_g).
\end{align}
\end{Lemma}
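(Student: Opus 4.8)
The plan is to reduce everything to the Leibniz (permutation) expansion of the determinant and then track degrees in the indeterminate~$x$. First I would abbreviate $P(x) := \sum_{i=0}^g P_i\,x^i \in \mathbb{K}[x]^{\ell\times n}$ and observe that each scalar entry $P(x)_{a,b} = \sum_{k=0}^g (P_k)_{a,b}\,x^k$ is a polynomial in~$x$ of degree at most~$g$ whose coefficients are precisely entries of $P_0,\ldots,P_g$. Since the tuple $(P_0,\ldots,P_g)$ consists of exactly $\ell n(g+1)$ scalar entries, these coefficients are the $\ell n(g+1)$ indeterminates appearing in the claimed polynomials~$M_i$.

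Next, by Definition~\ref{def:minor} and the Leibniz formula,
\[
\widetilde{M}(P(x)) = \sum_{\tau\in S_d}\mathrm{sgn}(\tau)\prod_{i=1}^d P(x)_{\sigma(i),\pi(\tau(i))},
\]
where $S_d$ denotes the symmetric group on $\set{1,\ldots,d}$ and $\sigma,\pi$ are the strictly increasing index maps defining~$\widetilde{M}$. Each factor is a polynomial in~$x$ of degree at most~$g$, so every summand, and hence the whole sum, is a polynomial in~$x$ of degree at most~$dg$. Expanding the product of the $d$ factors, the coefficient of each power~$x^i$ is a sum of products of $d$ entries of the~$P_k$, i.e.\ a polynomial in the $\ell n(g+1)$ coordinates; collecting the summands over~$\tau$ by powers of~$x$ therefore yields polynomials $M_0,\ldots,M_{dg}\in\mathbb{K}[x_1,\ldots,x_{\ell n(g+1)}]$ with $\widetilde{M}(P(x)) = \sum_{i=0}^{dg} M_i\,x^i$, which is~\eqref{eq:formula_minors}.

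Finally I would read off the two extreme coefficients. The constant term~$M_0$ is obtained by taking the $x^0$-coefficient of each factor, namely $(P_0)_{\sigma(i),\pi(\tau(i))}$; summing $\mathrm{sgn}(\tau)\prod_i (P_0)_{\sigma(i),\pi(\tau(i))}$ over~$\tau$ reassembles $\det[(P_0)_{\sigma(i),\pi(j)}] = \widetilde{M}(P_0)$. Symmetrically, the top power~$x^{dg}$ can only arise when each of the $d$ factors contributes its leading term $(P_g)_{\sigma(i),\pi(\tau(i))}\,x^g$, which gives $M_{gd} = \widetilde{M}(P_g)$; together these establish~\eqref{eq:particular_minors}.

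There is no genuine obstacle here: the statement is essentially a bookkeeping exercise built on the Leibniz expansion and the multiplicativity of degrees. The one point that warrants a word of care is the identification of the top coefficient~$M_{gd}$, where one must verify that the degree-$dg$ part of the product stems solely from the product of the leading terms of the $d$ factors, with no cross contributions. This is immediate from the degree bound --- since each of the $d$ factors has degree at most~$g$, attaining total degree~$dg$ forces every factor to be taken at its maximal degree~$g$ --- but it is the only step where the sharpness of the degree count is actually invoked.
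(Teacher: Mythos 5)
Your proof is correct and follows essentially the same route as the paper's: both expand $\widetilde{M}\bigl(\sum_{i=0}^g P_i x^i\bigr)$ via the Leibniz formula and then sort by powers of~$x$, reading off the extreme coefficients from the constant and leading terms of each factor. The only cosmetic difference is that the paper first reduces to the case $\ell=n=d$, $\widetilde{M}=\det(\cdot)$, whereas you carry the index maps $\sigma,\pi$ through the Leibniz sum explicitly; your version is in fact slightly more detailed on the degree bookkeeping for the top coefficient.
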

\begin{proof}
By definition, minors are determinants of certain submatrices. Therefore, it suffices to consider the case~$\ell = n = d$ and~$\widetilde{M} = \det(\cdot)$. In that case, the Leibniz formula for the determinant yields
\begin{align*}
M(P_0,\ldots,P_g) = \widetilde{M}\left(\sum_{i = 0}^gP_i x^i\right) = \sum_{\sigma\in S_d}\mathrm{sgn}(\sigma)\prod_{j = 1}^d\sum_{i = 0}^g(P_i)_{j,\sigma(j)} x^i,
\end{align*}
where~$S_d$ denotes the set of all permutations of all~$d$-tuples.
From this,~\eqref{eq:formula_minors} and~\eqref{eq:particular_minors} 
follow by expanding the products and sorting by the degree of the resulting monomials. The fact that the functions~$M_i$ are polynomials in the entries of~$P_0,\ldots,P_g$, follows likewise from the Leibniz formula.
\end{proof}

The properties of minors in Lemma~\ref{lem:important_property} and~\ref{lem:minors_are_polynomials} will help   to understand Zariski-open sets within the sets~$S^H_{\textit{controllable}}$ and~$S^{sdH}_{\textit{controllable}}$ that are given by the rank of particular block matrices. As a last algebraic ingredient, we recall the important concept of the Sylvester resultant.

\begin{Lemma}[{\cite[Thm.~3.3.1, p.\,61]{Fuhr12}}]\label{Def:Resultant}
The \emph{resultant} of two polynomials~$p,q\in\mathbb{K}[x]\setminus\set{0_{\mathbb{K}[s]}}$ with~$\deg p = n\geq 0$ and~$\deg q = m\geq 0$ and coefficients~$p_1,\ldots,p_n,q_1,\ldots,q_m\in\mathbb{K}$ is defined as
\[
\mathbb{R}es(p,q) = \det\underbrace{\left[\begin{array}{ccccc|ccccc}
p_0 & & & & & q_0 & & &\\
p_1 & p_0 & & & & q_1 & \cdot &\\
\cdot & \cdot & & & & \cdot & \cdot & \cdot & \\
\cdot & \cdot & \cdot & & & \cdot & \cdot & \cdot & q_0 \\
p_n & p_{n-1} & \cdot & \cdot & & q_n & \cdot & \cdot & q_1 \\
 & p_n & \cdot & \cdot & & \cdot & \cdot & \cdot & \cdot \\
 & & \cdot & \cdot & p_0 & q_{m-1} & \cdot & \cdot & \cdot\\
& & \cdot & \cdot & \cdot & q_{m} & \cdot & \cdot & \cdot & \\
& & \cdot & \cdot & \cdot & & \cdot & \cdot & \cdot \\
& & & \cdot & \cdot & & & \cdot & \cdot\\
& & & & p_n & & & &  q_m
\end{array}\right]}_{\in\mathbb{K}^{(n+m)\times (m+n)}}.
\]
The matrix above contains~$m$ columns with the coefficients of~$p$ and~$n$ columns with the coefficients of~$q$, so that it is in~$\mathbb{K}^{(n+m)\times (m+n)}$. All other entries are zero. Please note that the diagram illustrates the case~$n<m$.

The resultant of~$p$ and~$q$ vanishes if, and only if,~$p$ and~$q$ are not coprime~$\big($i.e.~there is some common zero~$z\in\mathbb{C}$ such that~$p(z) = q(z) = 0\big)$.
\end{Lemma}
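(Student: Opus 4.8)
The plan is to reduce the vanishing of the resultant to a linear-algebra condition and then translate that condition back into the statement about common factors. The starting point is the classical characterization of non-coprimality by a degree-bounded B\'ezout-type relation: two nonzero polynomials~$p,q\in\mathbb{K}[x]$ with~$\deg p = n$ and~$\deg q = m$ fail to be coprime in~$\mathbb{K}[x]$ if, and only if, there exist nonzero~$a,b\in\mathbb{K}[x]$ with~$\deg a < m$ and~$\deg b < n$ such that~$ap = bq$. I would prove the nontrivial direction by a~$\gcd$ argument: if a common factor~$d$ of positive degree exists, writing~$p = d\tilde p$,~$q = d\tilde q$ and setting~$a = \tilde q$,~$b = \tilde p$ produces such a relation; conversely, if~$ap = bq$ with a genuinely nonzero pair~$(a,b)$ as above (note~$a = 0$ forces~$b = 0$ since~$q\neq 0$) and~$p,q$ were coprime, then~$q\mid ap$ together with coprimality gives~$q\mid a$, which the bound~$\deg a < m$ forbids unless~$a = 0$, a contradiction.

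Next I would set up the Sylvester map. Consider the~$(m+n)$-dimensional vector spaces
\[V := \set{(a,b)\in\mathbb{K}[x]^2 \,\big\vert\, \deg a < m,\ \deg b < n}, \qquad W := \set{r\in\mathbb{K}[x]\,\big\vert\,\deg r < m+n},\]
and the linear map~$\Phi:V\to W$,~$(a,b)\mapsto ap - bq$. Writing~$a,b,p,q$ in the monomial bases, the matrix of~$\Phi$ is, up to a harmless column sign and a transpose, exactly the Sylvester matrix displayed in the statement: the first~$m$ columns collect the coefficients of~$x^i p$ for~$i = 0,\ldots,m-1$, the remaining~$n$ columns those of~$x^j q$ for~$j = 0,\ldots,n-1$. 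Hence~$\mathrm{Res}(p,q) = \pm\det\Phi$. Since~$\dim V = \dim W = m+n$, the determinant vanishes if, and only if,~$\ker\Phi\neq\set{0}$, i.e.\ if, and only if, a nonzero pair~$(a,b)\in V$ with~$ap = bq$ exists. Combined with the previous step, this shows that~$\mathrm{Res}(p,q) = 0$ if, and only if,~$p$ and~$q$ are not coprime in~$\mathbb{K}[x]$.

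Finally I would pass from coprimality over~$\mathbb{K}[x]$ to the existence of a common zero in~$\mathbb{C}$. Coprimality is insensitive to the coefficient field: the Euclidean algorithm computes~$\gcd(p,q)$ within~$\mathbb{K}[x]$, and the identical computation over~$\mathbb{C}[x]$ yields an associate result, so~$p,q$ are coprime in~$\mathbb{K}[x]$ if, and only if, they are coprime in~$\mathbb{C}[x]$. Since~$\mathbb{C}$ is algebraically closed,~$\mathbb{C}[x]$ is a principal ideal domain whose irreducibles are the linear factors, so~$p$ and~$q$ are non-coprime in~$\mathbb{C}[x]$ precisely when they share a factor~$x-z$, equivalently when there is a common zero~$z\in\mathbb{C}$. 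Chaining the equivalences gives the claim.

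I expect the main obstacle to be the bookkeeping in the middle step: verifying that the matrix of~$\Phi$ in the monomial bases coincides column by column with the Sylvester matrix as drawn, including the placement of the~$m$ ``$p$-columns'' and~$n$ ``$q$-columns'', the staircase pattern of shifts, and the sign and transpose conventions. Everything else is a clean consequence of unique factorization; the field-extension invariance of coprimality is the only other point requiring a word of justification, and it is exactly what licenses phrasing non-coprimality as a common zero in~$\mathbb{C}$ even when~$\mathbb{K} = \mathbb{R}$. Since the statement is cited to Fuhrmann for the full details, I would at most sketch the matrix identification rather than grind through the index arithmetic.
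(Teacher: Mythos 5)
Your proof is correct. Note, however, that the paper offers no proof of this lemma at all: it is quoted verbatim from Fuhrmann's book (the bracketed citation in the lemma header is the paper's entire justification), so there is no internal argument to compare against. Your route --- reducing non-coprimality to a degree-bounded relation~$ap = bq$, identifying the Sylvester matrix as the matrix of the linear map~$(a,b)\mapsto ap-bq$ between the~$(m+n)$-dimensional spaces of degree-bounded polynomial pairs and polynomials of degree~$<m+n$, and then using invariance of the~$\gcd$ under the field extension~$\mathbb{K}\subseteq\mathbb{C}$ to phrase non-coprimality as a common complex zero --- is the standard textbook argument and is essentially what the cited source does. All three steps are sound, including the edge case~$n=0$ or~$m=0$ (where the degree bound~$\deg b < n$ forces~$b=0$, matching the fact that a nonzero constant is coprime to everything and the Sylvester determinant degenerates to a nonzero power of the leading coefficient); the only caveat, which you flag yourself, is that the column-by-column identification of the matrix of the Sylvester map with the displayed matrix, including the sign~$(-1)^n$ introduced by the~$q$-columns, deserves to be written out if a fully self-contained proof is wanted, since that sign is harmless for the vanishing statement but not for the value of the resultant.
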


Since relative genericity strongly depends on the reference set, we seek to simplify (in the sense of Proposition~\ref{prop:properties_rel_gen}\,(e)) the reference sets~$\Sigma_{\ell,n,m}^H$ and~$\Sigma_{\ell,n,m}^{sdH}$ as much as possible. As a first step towards a simplification, we prove that semi-definite matrices are generically definite.

\begin{Lemma}\label{lem:definite_is_generic}
Let~$\ell\in\mathbb{N}^*$. The set~$S = \set{M\in\mathbb{K}^{\ell\times \ell}\,\big\vert\,M^* = M > 0}$ of Hermitian (symmetric) positive definite matrices is relative generic in~$V = \set{M\in\mathbb{K}^{\ell\times \ell}\,\big\vert\,M^* = M \geq 0}$, the set of Hermitian (symmetric) positive semidefinite matrices. The same holds true if we consider negative instead of positive (semi-)definite matrices.
\end{Lemma}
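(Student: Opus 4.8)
The plan is to reduce the statement to the characterization of relative genericity in \emph{convex} reference sets provided by Lemma~\ref{lem:convex_reference_sets}. First I would record that the reference set~$V$ of Hermitian positive semidefinite matrices is convex and non-empty: convexity holds because $x^*(\lambda M + (1-\lambda)N)x = \lambda\, x^*Mx + (1-\lambda)\,x^*Nx \geq 0$ for all~$M,N\in V$, $\lambda\in[0,1]$ and~$x$, and $\lambda M + (1-\lambda)N$ is again Hermitian; non-emptiness is witnessed by the identity. In the complex case one works in the real representation of Definition~\ref{def:complex}, where~$\varphi(V)$ is still convex as the linear image of a convex set. Thus by Lemma~\ref{lem:convex_reference_sets} it suffices to exhibit a single Zariski-open set~$O$ with $O\cap V\neq\emptyset$ and $O\cap V\subseteq S\cap V$.

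The natural candidate is $O := \set{M\in\mathbb{K}^{\ell\times\ell} : \det M\neq 0}$. I would verify that~$O$ is Zariski-open: in the real case~$\det$ is a polynomial in the~$\ell^2$ entries, so $O^c = \det^{-1}(\set{0})$ is an algebraic set; in the complex case, writing $M_{i,j} = a_{i,j} + \mathrm{i}\,b_{i,j}$ gives $\det M = p(a,b) + \mathrm{i}\,q(a,b)$ with real polynomials~$p,q$, so that $O^c = p^{-1}(\set{0})\cap q^{-1}(\set{0})$ is a real algebraic set in~$\mathbb{R}^{2\ell^2}$, consistent with the real-genericity convention of Definition~\ref{def:complex}.

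The core step is an eigenvalue argument establishing $O\cap V\subseteq S$. If $M\in V$ is Hermitian positive semidefinite, then its eigenvalues $\lambda_1,\ldots,\lambda_\ell$ are real and nonnegative and $\det M = \prod_{i=1}^\ell \lambda_i$. Hence $\det M\neq 0$ forces every $\lambda_i>0$, i.e.~$M>0$, so $M\in S$. Since conversely $S\subseteq V$, we have $S\cap V = S$, whence the required inclusion $O\cap V\subseteq S\cap V$ holds; non-emptiness of $O\cap V$ is immediate from $I\in O\cap V$. Lemma~\ref{lem:convex_reference_sets} then delivers relative genericity of~$S$ in~$V$.

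For the negative (semi-)definite variant I would run the identical argument: a negative semidefinite matrix has all eigenvalues $\leq 0$, and $\det M\neq 0$ again characterizes strict definiteness, so the same set~$O$ works; alternatively, $M\mapsto -M$ is a linear isomorphism carrying the positive onto the negative cones and preserving relative genericity. I do not expect a serious obstacle; the only point demanding care is the complex case, where one must keep the real-versus-complex Zariski topologies apart so that~$O$ is genuinely open in the $\mathbb{R}^{2\ell^2}$-representation of Definition~\ref{def:complex} rather than in the complex Zariski topology.
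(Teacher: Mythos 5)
Your proof is correct, but it takes a different route than the paper. The paper argues via Lemma~\ref{lem:rel_gen_char}: it observes (as you do) that within~$V$ failure of definiteness is exactly the vanishing of the determinant, so that~$S = V\setminus\set{M\,\big\vert\,\det M = 0}$ is relative Zariski-open, and then it establishes the required relative Euclidean density of~$S$ in~$V$ by an explicit perturbation: for~$M\in V$ and~$\varepsilon>0$, the matrix~$M+\tfrac{\varepsilon}{2}I_\ell$ is positive definite and within distance~$\varepsilon$ of~$M$. You instead invoke Lemma~\ref{lem:convex_reference_sets}, verifying convexity and non-emptiness of the semidefinite cone, so that a single witness~$I_\ell\in O\cap V$ replaces any density argument; the eigenvalue step~$O\cap V\subseteq S$ is the same determinant characterization the paper uses. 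Both routes are legitimate and non-circular (Lemma~\ref{lem:convex_reference_sets} precedes this lemma in the paper). What each buys: the paper's argument is more elementary and self-contained, since the perturbation along the identity direction stays in~$V$ trivially and avoids appealing to the comparatively heavy convexity lemma; your argument is shorter at the point of use and is exactly the pattern the paper itself adopts for the later, harder genericity statements (e.g.\ Lemmas~\ref{lem:Hamiltonian_pairs_full_rank_pointwise}--\ref{lem:semidefinite_Hamiltonian_pair_pointwise_invertible} and Proposition~\ref{lem:main_result_1}), where explicit dense perturbations are no longer easy to write down. Your attention to the complex case (real Zariski topology on the~$\mathbb{R}^{2\ell^2}$ representation, with~$O^c$ the common zero set of the real and imaginary parts of~$\det$) is a point the paper glosses over, and your two alternatives for the negative definite case are both sound.
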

\begin{proof}
Recall that each symmetric positive semi-definite matrix is not positive definite if, and only if, it has non-empty kernel (see e.g.~\cite[Theorem 7.2.6]{HornJohn12}). Since the determinant is a polynomial,~$S = V\cap S = V\setminus\set{M\in\mathbb{K}^{\ell\times \ell}\,\big\vert\,\det M = 0}$ is relative Zariski-open. In view of Lemma~\ref{lem:rel_gen_char}, it remains to prove that~$S$ is a Euclidean dense subset of~$V$. Let~$M\in V$ and~$\varepsilon>0$. We construct a symmetric positive definite matrix~$\tilde{M}\in S$ so that~$\norm{M-\tilde{M}}<\varepsilon$, where~$\norm{\cdot}$ is a norm on~$\mathbb{K}^{\ell\times \ell}$ so that~$\norm{I_\ell} = 1$. It is evident that~$M+\frac{\varepsilon}{2} I_\ell$ is positive definite and
\begin{align*}
\norm{M-M-\tfrac{\varepsilon}{2} I_\ell} = \tfrac{\varepsilon}{2}<\varepsilon.
\end{align*}
This shows the assertion.
\end{proof}

When studying the reference sets~$\Sigma_{\ell,n,m}^H$ and~$\Sigma_{\ell,n,m}^{sdH}$, one of the properties of the matrix tuples~$(E,J,Q,B)$ and~$(E,J,R,Q,B)$, resp., is the symmetry condition~$E^*Q = Q^*E$. Given matrices~$E,Q\in\mathbb{K}^{\ell\times n}$ with~$E^*Q = Q^*E$, we show how perturbations of~$E$ and~$Q$ that preserve this symmetry property look like.

\begin{Lemma}\label{lem:How_do_feasible_perturbations_look_like}
Let~$\ell,n\in \mathbb{N}^*$ and~$E,Q\in \mathbb{K}^{\ell\times n}$ with~$E^*Q = Q^*E$. Then there exist unitary (orthogonal) matrices~$P\in \mathbb{K}^{\ell\times \ell}$,~$T\in \mathbb{K}^{n\times n}$ and
\begin{align}\label{eq:proof1}
    \Sigma_k = \begin{bmatrix}
\sigma_1\\
& \ddots\\
& & \sigma_k
\end{bmatrix} \in\mathbf{Gl}(\mathbb{R}^k),\quad \text{where } k = \rk E
\end{align}
such that, for some~$\widetilde{Q}\in \mathbb{K}^{k\times k}$,~$R_1\in \mathbb{K}^{(\ell-k)\times k}$,~$R_2\in \mathbb{K}^{(\ell-k)\times(n-k)}$,
\begin{align}\label{eq:proof2}
    PET = \begin{bmatrix}
    \Sigma_k&0_{k\times (n-k)}\\
    0_{(\ell-k)\times k}&0_{(\ell-k)\times (n-k)}
    \end{bmatrix},\qquad PQT= \begin{bmatrix}
    \widetilde{Q}&0_{k\times(n-k)}\\
    R_1&R_2
    \end{bmatrix}
\end{align}
and 
\begin{align}\label{eq:proof3}
    TE^*QT = \begin{bmatrix}
    \Sigma_K \widetilde{Q}&0_{k\times(n-k)}\\
    0_{(\ell-k)\times k}&0_{(\ell-k)\times (n-k)}
    \end{bmatrix}
\end{align}
and
\begin{align}\label{eq:proof4}
    \Sigma_k \widetilde{Q} = \widetilde{Q}^* \Sigma_k, \quad \rk \widetilde{Q} = \rk E^* Q
\end{align}
and 
\begin{align}\label{eq:proof5}
    E^*Q=Q^*E \geq 0 \quad \text{if, and only if, } \quad \Sigma_K\widetilde{Q} = \widetilde{Q}^*\Sigma_k \geq 0.
\end{align}
Furthermore, 
\begin{align*}
    V_{\Sigma_k} := \left\{ \left.\begin{bmatrix}
    M&0_{k\times (n-k)}\\
    H_1&H_2
    \end{bmatrix} \in \mathbb{K}^{\ell\times n}\,\right\vert\, \Sigma_k M = M^*\Sigma_k\right\}
\end{align*}
is a real vector space.
\end{Lemma}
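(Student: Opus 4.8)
The plan is to construct $P$, $T$, and $\Sigma_k$ from a singular value decomposition of $E$, and then to exploit the symmetry constraint $E^*Q = Q^*E$ to pin down the block structure of the transformed $Q$; everything in \eqref{eq:proof2}--\eqref{eq:proof5} will then fall out of one block computation. First I would invoke the SVD: since $\rk E = k$, there exist unitary (orthogonal in the real case) $P\in\mathbb{K}^{\ell\times\ell}$ and $T\in\mathbb{K}^{n\times n}$ with $PET$ in the block form of \eqref{eq:proof2}, where $\Sigma_k$ collects the $k$ strictly positive singular values of $E$ on its diagonal. These are real and positive, so $\Sigma_k\in\mathbf{Gl}(\mathbb{R}^k)$ as required in \eqref{eq:proof1}. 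Applying the same $P$ and $T$ to $Q$ and writing the result in conforming blocks, $PQT = \left[\begin{smallmatrix}\widetilde{Q} & Q_{12}\\ R_1 & R_2\end{smallmatrix}\right]$, the whole point of \eqref{eq:proof2} is that the off-diagonal block $Q_{12}$ must vanish.

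The decisive step is the computation of $T^*E^*QT$. Using $P^*P = I$ and the fact that $\Sigma_k$ is real and diagonal, so that $(PET)^* = \left[\begin{smallmatrix}\Sigma_k & 0\\ 0 & 0\end{smallmatrix}\right]$, I obtain
\[
T^*E^*QT = (PET)^*(PQT) = \begin{bmatrix}\Sigma_k & 0\\ 0 & 0\end{bmatrix}\begin{bmatrix}\widetilde{Q} & Q_{12}\\ R_1 & R_2\end{bmatrix} = \begin{bmatrix}\Sigma_k\widetilde{Q} & \Sigma_k Q_{12}\\ 0 & 0\end{bmatrix}.
\]
Now $E^*Q = Q^*E$ says precisely that $E^*Q$ is Hermitian, and Hermitian-ness is preserved under congruence with $T$, so the matrix above equals its own conjugate transpose. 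Comparing the $(1,2)$ block with the (forcibly zero) $(2,1)$ block gives $\Sigma_k Q_{12} = 0$, whence $Q_{12} = 0$ by invertibility of $\Sigma_k$; this establishes \eqref{eq:proof2} and \eqref{eq:proof3}. The $(1,1)$ block yields $\Sigma_k\widetilde{Q} = (\Sigma_k\widetilde{Q})^* = \widetilde{Q}^*\Sigma_k$, the first identity of \eqref{eq:proof4}. For the rank identity, invertibility of $T$ and of $\Sigma_k$ gives $\rk E^*Q = \rk(T^*E^*QT) = \rk(\Sigma_k\widetilde{Q}) = \rk\widetilde{Q}$. For \eqref{eq:proof5}, note that $T^*E^*QT = T^{-1}(E^*Q)T$ is a similarity, so $E^*Q$ and $T^*E^*QT$ have the same spectrum; combined with Hermitian-ness and the elementary fact that a Hermitian block-diagonal matrix with a zero block is positive semidefinite exactly when its nonzero block is, this gives $E^*Q\geq 0 \iff \Sigma_k\widetilde{Q}\geq 0$.

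Finally, for the vector-space claim I would verify the subspace axioms for $V_{\Sigma_k}$ directly. The zero matrix satisfies $\Sigma_k\cdot 0 = 0 = 0^*\Sigma_k$, and the blocks $H_1,H_2$ are unconstrained, so only the constraint $\Sigma_k M = M^*\Sigma_k$ on the $(1,1)$ block matters. This constraint is additive in $M$, giving closure under addition. The only subtlety—and the reason the statement asserts a \emph{real} rather than complex vector space—is scalar multiplication: for $\lambda\in\mathbb{R}$ one has $(\lambda M)^* = \lambda M^*$, so $\Sigma_k(\lambda M) = \lambda M^*\Sigma_k = (\lambda M)^*\Sigma_k$ and the constraint survives, whereas for non-real $\lambda$ the conjugation in $(\lambda M)^* = \overline{\lambda}\,M^*$ breaks it. I expect the main obstacle to be the block computation in the second step—getting the adjoints and block positions exactly right so that the Hermitian symmetry of $T^*E^*QT$ genuinely forces $Q_{12}=0$; the remaining parts are bookkeeping once that block form is in hand.
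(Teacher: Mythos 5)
Your proposal is correct and follows essentially the same route as the paper: a singular value decomposition of $E$ produces $P$, $T$, $\Sigma_k$, and the symmetry $E^*Q=Q^*E$, written in block form via $(PET)^*(PQT)$, forces the $(1,2)$ block of $PQT$ to vanish and yields $\Sigma_k\widetilde{Q}=\widetilde{Q}^*\Sigma_k$, the rank identity, and the definiteness equivalence. The only cosmetic differences are that the paper equates $(PET)^*(PQT)$ with $(PQT)^*(PET)$ directly rather than invoking Hermitian-ness of $T^*E^*QT$, and it treats \eqref{eq:proof5} by congruence rather than your similarity/spectrum argument (both valid here since $T$ is unitary).
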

\begin{proof}
By the singular value theorem~\cite[Theorem 2.6.3]{HornJohn12}, there are orthogonal (unitary) matrices~$P\in \mathbb{K}^{\ell\times \ell}$ and~$T\in \mathbb{K}^{n\times n}$ such that~\eqref{eq:proof2} holds with~$\Sigma_k$ as in~\eqref{eq:proof1}. Since $P$ is unitary (orthogonal) and~$T$ invertible,~$E^*Q = Q^*E$ is equivalent to 
\begin{align}\label{eq:proof7}
    (PET)^*(PQT) = (PQT)^*(PET).
\end{align}
Inserting the first equation of~\eqref{eq:proof2} into \eqref{eq:proof7} yields the second equation in~\eqref{eq:proof2}.\\
This gives 
\begin{align}\label{eq:proof8}
    T^*E^*QT = (PET)^*(PQT) \stackrel{\eqref{eq:proof2}}{=} \begin{bmatrix}
    \Sigma_k \widetilde{Q}&0_{k\times(n-k)}\\
    0_{(\ell-k)\times k}&0_{(\ell-k)\times(n-k)}
    \end{bmatrix}
\end{align}
which shows~\eqref{eq:proof3}. As~$\Sigma_k \in\mathbf{Gl}(\mathbb{R}^k)$, the second equation in~\eqref{eq:proof4} follows from~\eqref{eq:proof8}. A repeated application of~$E^*Q=Q^*E$ yields
\begin{align*}
    \begin{bmatrix}
    \Sigma_k \widetilde{Q}&0_{k\times (n-k)}\\
    0_{(\ell-k)\times k}&0_{(\ell-k)\times(n-k)}
    \end{bmatrix}
&\stackrel{\mathclap{\eqref{eq:proof8}}}{=} T^*E^*QT = T^*Q^*ET = (PQT)^*(PET)\\
&=   \begin{bmatrix}
    \widetilde{Q}^*\Sigma_k &0_{k\times(n-k)}\\
    0_{(\ell-k)\times k}&0_{(\ell-k)\times(n-k)}
    \end{bmatrix}
\end{align*}
which shows the first equation in~\eqref{eq:proof4}. The equivalence in~\eqref{eq:proof5} is a consequence of~\eqref{eq:proof8}. Finally, it is easy to see that~$V_{\Sigma_k}$ is a real vector space.
\end{proof}

Next, we show that the matrices~$E$ and~$Q$, from which the generalized energy function~$x\mapsto x^*E^*Qx$ that is associated to a port-Hamiltonian descriptor system is composed of, have generically full rank.

\begin{Lemma}\label{lem:Hamiltonian_pairs_full_rank_pointwise}
Let~$\ell,n\in\mathbb{N}^*$. Then each of the  sets 
\begin{enumerate}[(i)]
\item~$S_{(i)}\, = \set{(E,Q)\in\mathbb{K}^{\ell\times n}\times\mathbb{K}^{\ell\times n}\,\big\vert\,\rk Q = \min\set{\ell,n}}$ 
\item~$S_{(ii)} = \set{(E,Q)\in\mathbb{K}^{\ell\times n}\times\mathbb{K}^{\ell\times n}\,\big\vert\,\rk E = \min\set{\ell,n}}$
\end{enumerate}
is relative generic in the reference set
\[
V \ = \ \set{(E,Q)\in\mathbb{K}^{\ell\times n}\times\mathbb{K}^{\ell\times n}\,\big\vert\,E^*Q = Q^*E}.
\]
\end{Lemma}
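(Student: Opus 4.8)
The plan is to show each set is relative generic in $V$ by exhibiting, in view of Lemma~\ref{lem:rel_gen_char}, a relative Zariski-open, relative Euclidean dense subset of $V$ contained in $S_{(i)}$ (resp.\ $S_{(ii)}$). By symmetry of the two statements under the interchange $E\leftrightarrow Q$ — note that $V$ is invariant under swapping $E$ and $Q$, since $E^*Q=Q^*E$ is equivalent to $Q^*E=E^*Q$ — it suffices to treat one of them, say $S_{(i)}$, and the other follows verbatim.

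First I would set $d := \min\set{\ell,n}$ and invoke Lemma~\ref{lem:important_property}: a pair $(E,Q)$ lies in $S_{(i)}$ if, and only if, some minor of order $d$ of $Q$ (w.r.t.\ $\mathbb{K}^{\ell\times n}$) does not vanish. Since each such minor is a polynomial in the entries of $(E,Q)$, the set $S_{(i)}^c\cap V$ is the intersection of $V$ with the common zero locus of all these minors, hence $S_{(i)}\cap V$ is relative Zariski-open in $V$. Thus the only substantive task is relative Euclidean density: I must show that every $(E,Q)\in V$ can be approximated arbitrarily well, \emph{while remaining in the constraint set} $V$, by some pair with $\rk Q = d$.

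The density argument is where the structure-preserving perturbation of Lemma~\ref{lem:How_do_feasible_perturbations_look_like} becomes essential, and this is the step I expect to be the main obstacle: a naive perturbation $Q\mapsto Q+\delta$ destroys the symmetry $E^*Q=Q^*E$. Given $(E,Q)\in V$ and $\varepsilon>0$, I would apply Lemma~\ref{lem:How_do_feasible_perturbations_look_like} to obtain the unitary (orthogonal) $P,T$ and $\Sigma_k$ with $PET$ and $PQT$ in the block form~\eqref{eq:proof2}, where $k=\rk E$. Since multiplication by the fixed unitary matrices $P,T$ is an isometry in a suitable operator norm and leaves membership in $V$ invariant (because the symmetry condition is preserved under the congruence implicit in~\eqref{eq:proof7}), it suffices to find an admissible perturbation in the transformed coordinates. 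Concretely, the admissible perturbations of $PQT$ are exactly those keeping the upper-right block zero and keeping $\Sigma_k\widetilde{Q}=\widetilde{Q}^*\Sigma_k$, i.e.\ those lying in the real vector space $V_{\Sigma_k}$ from Lemma~\ref{lem:How_do_feasible_perturbations_look_like}. I would perturb within this space to raise the rank of the transformed $Q$ to $d$.

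To make the rank increase explicit I would split into the cases governing $d=\min\set{\ell,n}$: one perturbs the free blocks $R_1,R_2$ (which are entirely unconstrained and contribute $\min\set{\ell-k,n-k}$ extra independent rows/columns) and, if necessary, the block $\widetilde{Q}$ subject only to the single constraint $\Sigma_k\widetilde{Q}=\widetilde{Q}^*\Sigma_k$. Since $\Sigma_k$ is invertible, $\widetilde{Q}\mapsto\Sigma_k\widetilde{Q}$ identifies admissible $\widetilde{Q}$ with Hermitian (symmetric) matrices, and by Lemma~\ref{lem:definite_is_generic} these can be made invertible by an arbitrarily small admissible perturbation; thus $\widetilde{Q}$ can be pushed to full rank $k$ while preserving~\eqref{eq:proof4}. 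Combining a full-rank $\widetilde{Q}$ with a generic choice of $R_1,R_2$ yields $\rk(PQT)=\min\set{\ell,n}$, hence $\rk Q=d$, within distance $\varepsilon$ of the original pair and without leaving $V$. This proves density, and with the Zariski-openness already established, Lemma~\ref{lem:rel_gen_char} gives that $S_{(i)}$ is relative generic in $V$; the case $S_{(ii)}$ follows by the $E\leftrightarrow Q$ symmetry.
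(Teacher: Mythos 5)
Your proposal is correct and follows the paper's own strategy very closely: reduce to $S_{(i)}$ via the $E\leftrightarrow Q$ symmetry of $V$, get relative Zariski-openness from minors (Lemmata~\ref{lem:important_property} and~\ref{lem:minors_are_polynomials}), and reduce the density question, via Lemma~\ref{lem:How_do_feasible_perturbations_look_like}, to perturbing $PQT$ inside the real vector space $V_{\Sigma_k}$ while keeping $E$ fixed. The only divergence is the finishing move: the paper shows that the set of full-rank matrices in $V_{\Sigma_k}$ is relative generic there by combining convexity of $V_{\Sigma_k}$ with Lemma~\ref{lem:convex_reference_sets} and exhibiting a single explicit full-rank witness, whereas you build the approximant by hand, pushing $\widetilde{Q}$ to invertibility through the identification $\widetilde{Q}\mapsto\Sigma_k\widetilde{Q}$ with Hermitian matrices and taking $R_2$ of full rank; both are valid, and your rank count $\rk (PQT)\geq \rk\widetilde{Q}+\rk R_2$ for the block lower-triangular form is sound. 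One citation slip should be repaired: Lemma~\ref{lem:definite_is_generic} does not give what you invoke it for, since it concerns approximating positive \emph{semidefinite} matrices by definite ones, while here $\Sigma_k\widetilde{Q}$ is merely Hermitian --- the reference set $V$ in this lemma carries no semidefiniteness constraint. The fact you actually need, that every Hermitian matrix admits an arbitrarily small Hermitian perturbation to an invertible one, is elementary (add $\delta I$ with $-\delta$ avoiding the finitely many eigenvalues), or follows from Lemma~\ref{lem:convex_reference_sets} applied to the real vector space of Hermitian matrices and the nonempty Zariski-open set where the determinant does not vanish; with that one-line repair your argument is complete.
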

\begin{proof}
Since the statements of (i) and~(ii) are symmetric, it suffices in view of Proposition~\ref{prop:properties_rel_gen}\,(b) to show that~$S_{(i)}$ is relative generic in~$V$. Due to Lemmata~\ref{lem:important_property}~and~\ref{lem:minors_are_polynomials}, the set 
$$
\set{(E,Q)\in\mathbb{K}^{\ell\times n}\times\mathbb{K}^{\ell\times n}\,\big\vert\,\rk Q \geq \min\set{\ell,n}}
$$
is Zariski-open. As~$\rk Q \leq \min\set{\ell,n}$ for all~$Q \in \mathbb{K}^{\ell \times n}$,~$S_{(i)}$ is Zariski-open, as well. By Lemma~\ref{lem:rel_gen_char}, it suffices to show that~$S_{(i)}\cap V$ is Euclidean dense in~$V$. Let~$(E,Q)\in V$. Lemma~\ref{lem:How_do_feasible_perturbations_look_like} yields that there are unitary (orthogonal) matrices~$P\in\mathbb{K}^{\ell\times\ell}$ and~$T\in\mathbb{K}^{n\times n}$ and some diagonal matrix~$\Sigma\in\mathbf{Gl}(\mathbb{R}^k)$ with~$k = \rk E$ so that
\begin{align*}
PET = \begin{bmatrix}
\Sigma & 0_{k\times (n-k)}\\
0_{(\ell-k)\times k} & 0_{(\ell-k)\times (n-k)}
\end{bmatrix}
\end{align*}
and~$SQ'T\in V_{\Sigma}$ as introduced in Lemma~\ref{lem:How_do_feasible_perturbations_look_like}. We show that the set
\begin{align*}
S' = \set{A\in V_{\Sigma}\,\big\vert\,\rk A = \min\set{\ell,n}}
\end{align*}
is relative generic in~$V_{\Sigma}$. By Lemma~\ref{lem:How_do_feasible_perturbations_look_like},~$V_{\Sigma}$ is a real vector space and therefore convex. Further,~$S'$ is, in view of Lemma~\ref{lem:minors_are_polynomials}, relative Zariski open in~$V_{\Sigma}$. Hence, we can apply Lemma~\ref{lem:convex_reference_sets} and conclude that~$S'$ is relative generic in~$V_\Sigma$ if, and only if,~$S'$ is non-empty. Lemma~\ref{lem:How_do_feasible_perturbations_look_like} yields that
\begin{align*}
A' = \left[\begin{array}{c|c}
I_k & 0_{k\times (n-k)}\\\hline
0_{(\ell-k)\times\ell} & \begin{array}{cc}
I_{\min\set{\ell-k, n-k}} & 0\\
0 & 0
\end{array}
\end{array}\right]\in S'.
\end{align*}
This shows that~$S'$ is indeed non-empty and thus relative generic in~$V_\Sigma$. Especially,~$S'$ is a dense subset of~$V_\Sigma$. Thus, each neighbourhood of~$(E,Q)$ contains some~$(E,Q')$ so that~$PQ'T\in S'$ or, equivalently,~$(E,Q')\in S_{(i)}$. We conclude that~$S_{(i)}$ is indeed relative generic in~$V$.
\end{proof}

Using the results of Lemma~\ref{lem:Hamiltonian_pairs_full_rank_pointwise}, we can show that the rank of~$E^*Q$ is generically full.

\begin{Lemma}\label{lem:Hamiltonian_function_generically_full_rank}
Let~$\ell,n\in\mathbb{N}^*$. The set
\begin{align*}
S = \set{(E,Q)\in\mathbb{K}^{\ell\times n}\times\mathbb{K}^{\ell\times n}\,\big\vert\,\rk E^*Q = \min\set{\ell,n}}
\end{align*}
is relative generic in
\begin{align*}
V = \set{(E,Q)\in\mathbb{K}^{\ell\times n}\times\mathbb{K}^{\ell\times n}\,\big\vert\,E^*Q = Q^*E}.
\end{align*}
\end{Lemma}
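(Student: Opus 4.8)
The plan is to reduce, via Lemma~\ref{lem:rel_gen_char}, the claim to a Euclidean density statement, and then to establish that density in two perturbation steps that respect the coupling constraint $E^*Q=Q^*E$.

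\emph{Reduction to density.} Since $E^*Q\in\mathbb{K}^{n\times n}$ is a matrix product, its entries are polynomials in the entries of $E$ and $Q$, and hence so is every minor of $E^*Q$. As $\rk E^*Q\leq\min\set{\rk E,\rk Q}\leq\min\set{\ell,n}$ for all $(E,Q)$, Lemma~\ref{lem:important_property} shows that $S=\set{(E,Q)\,\big\vert\,\rk E^*Q\geq\min\set{\ell,n}}$ is the non-vanishing locus of the maximal minors and is therefore Zariski-open; consequently $S\cap V$ is relative Zariski-open in $V$. By Lemma~\ref{lem:rel_gen_char} it thus suffices to prove that $S\cap V$ is Euclidean dense in $V$. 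I also note $S\subseteq S_{(i)}\cap S_{(ii)}$, since $\rk E^*Q=\min\set{\ell,n}$ forces both factors to have full rank.

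\emph{Step 1: make $E$ full rank.} Fix $(E,Q)\in V$ and $\varepsilon>0$. By Lemma~\ref{lem:Hamiltonian_pairs_full_rank_pointwise}\,(ii) together with Proposition~\ref{prop:properties_rel_gen}\,(a), the set $S_{(ii)}\cap V$ is Euclidean dense in $V$, so there is $(E_1,Q_1)\in V$ with $\rk E_1=\min\set{\ell,n}$ and $\norm{(E_1,Q_1)-(E,Q)}<\varepsilon/2$.

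\emph{Step 2: perturb $Q$ within the symmetry constraint.} Apply Lemma~\ref{lem:How_do_feasible_perturbations_look_like} to $(E_1,Q_1)$ with $k=\rk E_1=\min\set{\ell,n}$, obtaining unitary $P,T$, an invertible diagonal $\Sigma_k$, and blocks $\widetilde{Q},R_1,R_2$ with $\Sigma_k\widetilde{Q}=\widetilde{Q}^*\Sigma_k$ and
\begin{align*}
T^*E_1^*Q_1T=\begin{bmatrix}\Sigma_k\widetilde{Q}&0\\0&0\end{bmatrix},
\end{align*}
so that $\rk E_1^*Q_1=\rk\widetilde{Q}$. The perturbations of $Q_1$ that keep $E_1$ fixed and stay in $V$ are precisely those whose transform lies in the real vector space $V_{\Sigma_k}$: replacing $\widetilde{Q}$ by any $\widetilde{Q}'$ with $\Sigma_k\widetilde{Q}'=(\widetilde{Q}')^*\Sigma_k$ and setting $Q'=P^*\left[\begin{smallmatrix}\widetilde{Q}'&0\\R_1&R_2\end{smallmatrix}\right]T^*$ yields $(E_1,Q')\in V$ with $\rk E_1^*Q'=\rk\widetilde{Q}'$. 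Within the convex set $\set{M\in\mathbb{K}^{k\times k}\,\big\vert\,\Sigma_kM=M^*\Sigma_k}$ the full-rank matrices form a relative Zariski-open subset (Lemma~\ref{lem:minors_are_polynomials}) that is non-empty, since $M=I_k$ satisfies $\Sigma_kI_k=\Sigma_k=\Sigma_k^*=I_k^*\Sigma_k$. Hence Lemma~\ref{lem:convex_reference_sets} makes it relative generic and thus dense, so I may choose $\widetilde{Q}'$ with $\rk\widetilde{Q}'=k$ and $\norm{\widetilde{Q}'-\widetilde{Q}}$ arbitrarily small. Unitarity of $P,T$ then keeps $\norm{Q'-Q_1}$ small, whence $(E_1,Q')\in S\cap V$ lies within $\varepsilon$ of $(E,Q)$, proving density and the lemma.

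\emph{Main obstacle.} The difficulty is that $E$ and $Q$ cannot be perturbed independently without leaving $V$, while full rank of $E^*Q$ demands full rank of \emph{both} factors simultaneously. Splitting the perturbation — first driving $E$ to full rank through Lemma~\ref{lem:Hamiltonian_pairs_full_rank_pointwise}, then moving $Q$ inside the affine, symmetry-preserving space $V_{\Sigma_k}$ — decouples the two requirements, and Lemma~\ref{lem:convex_reference_sets} reduces the remaining density claim to the single non-emptiness check $\widetilde{Q}=I_k$.
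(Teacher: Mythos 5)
Your proof is correct and takes essentially the same route as the paper's: first push $E$ to full rank using Lemma~\ref{lem:Hamiltonian_pairs_full_rank_pointwise}\,(ii), then fix $E$ and perturb $Q$ inside the symmetry-preserving real vector space from Lemma~\ref{lem:How_do_feasible_perturbations_look_like}, invoking the convexity criterion of Lemma~\ref{lem:convex_reference_sets} with an identity-block witness to make the block $\widetilde{Q}$ invertible. The only difference is organizational: the paper replaces the reference set $V$ by $\widetilde{V}=S_{(ii)}\cap V$ via Proposition~\ref{prop:properties_rel_gen}\,(e) and argues relative genericity there, whereas you run a direct two-step $\varepsilon/2$ density argument in $V$ itself; both are valid.
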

\begin{proof} For brevity  put~$k = \min\set{\ell,n}$. Consider the set
\begin{align*}
\widetilde{V} = \set{(E,Q)\in\mathbb{K}^{\ell\times n}\times\mathbb{K}^{\ell\times n}\,\big\vert\,E^*Q = Q^*E,\ \rk E = k}.
\end{align*}
Using the notation of Lemma~\ref{lem:Hamiltonian_pairs_full_rank_pointwise}, we see rightforthly that
\begin{align*}
\widetilde{V} = S_{(ii)}\cap V.
\end{align*}
Hence Lemma~\ref{lem:Hamiltonian_pairs_full_rank_pointwise}\,(ii) yields that~$\widetilde{V}$ is a relative generic subset of~$V$. In view of Proposition~\ref{prop:properties_rel_gen}\,(e), it suffices to show that~$S$ is relative generic in the reference set~$\widetilde{V}$. Since~$\rk E^*Q\leq k$ for all~$E,Q\in\mathbb{K}^{\ell\times n}$,~$S\cap \widetilde{V}$ is in view of Lemma~\ref{lem:minors_are_polynomials} relative Zariski open in~$\widetilde{V}$. Therefore, it remains to show that~$S\cap\widetilde{V}$ is Euclidean dense in~$\widetilde{V}$. Let~$(E,Q)\in\widetilde{V}$.
In view of Lemma~\ref{lem:How_do_feasible_perturbations_look_like}, there are unitary (orthogonal) matrices~$P\in\mathbf{Gl}(\mathbb{K}^\ell)$ and~$T\in\mathbf{Gl}(\mathbb{K}^n)$, and some diagonal matrix~$\Sigma\in\mathbf{Gl}(\mathbb{R}^k)$ so that 
\begin{align*}
PET = \begin{bmatrix}
\Sigma & 0_{k\times (n-k)}\\
0_{(\ell-k)\times k} & 0_{(\ell-k)\times (n-k)}
\end{bmatrix}\quad\text{and}\quad PQ'T =\begin{bmatrix}
\widetilde{Q} & 0_{k\times(m-k)}\\
R_1 & R_2
\end{bmatrix} \in V_{\Sigma}.
\end{align*}
By~\eqref{eq:proof4},~$\rk E^*Q = k$ if, and only if, $\widetilde{Q}\in\mathbf{Gl}(\mathbb{K}^{k}).$
We show that the set
\begin{align*}
S' = \set{M\in V_\Sigma\,\Big\vert\, [M_{i,j}]_{i,j\in\mathbb{N}^*_{\leq k}}\in\mathbf{Gl}(\mathbb{K}^k)}
\end{align*}
is relative generic in~$V_\Sigma$. By Lemma~\ref{lem:minors_are_polynomials},~$S'$ is a relative Zariski-open subset of~$V_\Sigma$. Since~$V_\Sigma$ is convex,  Lemma~\ref{lem:convex_reference_sets} yields that it suffices to verify that~$S'$ is non-empty. The latter, however, holds true since
\begin{align*}
\begin{bmatrix}
I_k & 0_{k\times (n-k)}\\
0_{(\ell-k)\times k} & 0_{(\ell-k)\times (n-k)}
\end{bmatrix}\in S'.
\end{align*}
Thus~$S'$ is indeed relative generic in~$V_\Sigma$. Hence, each neighbourhood of~$(E,Q)$ contains some~$(E,Q')\in\widetilde{V}$ so that~$PQ'T\in S'$ or, equivalently,~$(E,Q')\in S\cap \widetilde{V}$. This proves the assertion.
\color{black}
\end{proof}

We have not yet taken into account that~$E^*Q$ is positive semidefinite for port-Hamiltonian systems. We adapt the proof of Lemma~\ref{lem:Hamiltonian_pairs_full_rank_pointwise} to show that, for fixed~$Q$,~$E$ has generically full rank.

\begin{Lemma}\label{lem:semidefinite_Hamiltonian_pair_pointwise_invertible}
Let~$\ell,n\in\mathbb{N}^*$ and~$Q\in\mathbb{K}^{\ell\times n}$. The set
\begin{align*}
S = \set{E\in\mathbb{K}^{\ell\times n}\,\big\vert\,\rk E = \min\set{\ell,n}}
\end{align*}
is relative generic in the set
\begin{align*}
V_Q = \set{E\in\mathbb{K}^{\ell\times n}\,\big\vert\, E^*Q = Q^*E\geq 0}.
\end{align*}
\end{Lemma}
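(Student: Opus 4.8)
The plan is to exploit two structural features of the reference set: first, that $V_Q$ is convex, and second, that the target set $S$ is Zariski-open. Granting these, Lemma~\ref{lem:convex_reference_sets} reduces the entire assertion to producing a single full-rank matrix inside $V_Q$, that is, to showing $S\cap V_Q\neq\emptyset$.

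First I would record that $S$ is relative Zariski-open in $V_Q$. Since $\rk E\leq\min\set{\ell,n}$ holds for every $E\in\mathbb{K}^{\ell\times n}$, the set $S$ equals $\set{E\in\mathbb{K}^{\ell\times n}\,\vert\,\rk E\geq\min\set{\ell,n}}$, which is Zariski-open by Lemma~\ref{lem:important_property} and Lemma~\ref{lem:minors_are_polynomials}, as a minor of order $\min\set{\ell,n}$ is a polynomial that does not vanish there. Next I would verify convexity of $V_Q$: for $E_1,E_2\in V_Q$ and $\lambda\in[0,1]$, the matrix $\lambda E_1+(1-\lambda)E_2$ has $(\lambda E_1+(1-\lambda)E_2)^*Q=\lambda E_1^*Q+(1-\lambda)E_2^*Q$, a convex combination of Hermitian positive semidefinite matrices and hence again Hermitian positive semidefinite, while $Q^*(\lambda E_1+(1-\lambda)E_2)=\lambda Q^*E_1+(1-\lambda)Q^*E_2=(\lambda E_1+(1-\lambda)E_2)^*Q$; thus $V_Q$ is convex. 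In contrast to the vector-space reference sets $V_\Sigma$ of the previous lemmata, the sign constraint $\geq 0$ only makes $V_Q$ a convex cone, which is exactly why Lemma~\ref{lem:convex_reference_sets} (and not a reduction to a linear subspace) is the appropriate tool. Since $0\in V_Q$, the set is non-empty and convex, and by taking $O:=S$ in Lemma~\ref{lem:convex_reference_sets} it remains only to exhibit one $E\in S\cap V_Q$.

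The construction uses the singular value decomposition of the fixed matrix $Q$. By the singular value theorem~\cite[Theorem 2.6.3]{HornJohn12} there are unitary (orthogonal) matrices $P\in\mathbb{K}^{\ell\times\ell}$, $T\in\mathbb{K}^{n\times n}$ and a diagonal $\Sigma_r\in\mathbf{Gl}(\mathbb{R}^r)$, $r=\rk Q$, with $PQT=\begin{bmatrix}\Sigma_r & 0\\ 0 & 0\end{bmatrix}$. Let $\widehat{E}\in\mathbb{K}^{\ell\times n}$ be the ``rectangular identity'' with $\widehat{E}_{i,i}=1$ for $i\leq\min\set{\ell,n}$ and all other entries zero, so that $\rk\widehat{E}=\min\set{\ell,n}$, and set $E:=P^*\widehat{E}T^*$. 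Then $\rk E=\rk\widehat{E}=\min\set{\ell,n}$ because $P,T$ are invertible, and a short block computation gives $\widehat{E}^*(PQT)=\begin{bmatrix}\Sigma_r & 0\\ 0 & 0\end{bmatrix}$, the rectangular identity reproducing the leading $r\times r$ block since $r\leq\min\set{\ell,n}$. Consequently $E^*Q=T\,\widehat{E}^*(PQT)\,T^*=T\begin{bmatrix}\Sigma_r & 0\\ 0 & 0\end{bmatrix}T^*$ is Hermitian positive semidefinite, and being Hermitian it equals $Q^*E$. Hence $E\in V_Q$ with $\rk E=\min\set{\ell,n}$, so $S\cap V_Q\neq\emptyset$ and the proof concludes via Lemma~\ref{lem:convex_reference_sets}.

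I expect the only genuinely delicate point to be the construction witnessing $S\cap V_Q\neq\emptyset$, namely that full rank of $E$ and the sign constraint $E^*Q\geq 0$ can be met simultaneously even when $Q$ is rank-deficient. The singular value decomposition makes this transparent: the product $E^*Q$ only constrains the portion of $E$ acting on the $r$-dimensional range of $Q$, so choosing the leading block to reproduce $\Sigma_r\geq 0$ secures positivity, while the remaining free blocks of the rectangular identity supply the missing rank. The convexity of $V_Q$ and the Zariski-openness of $S$ are routine, and the reduction to non-emptiness is precisely what Lemma~\ref{lem:convex_reference_sets} provides.
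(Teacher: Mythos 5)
Your proposal is correct and takes essentially the same route as the paper's proof: both establish that $V_Q$ is convex and that $S$ is Zariski-open, invoke Lemma~\ref{lem:convex_reference_sets} to reduce the claim to $S\cap V_Q\neq\emptyset$, and produce a witness from the singular value decomposition $PQT=\begin{bmatrix}\Sigma_r & 0\\ 0 & 0\end{bmatrix}$. The only (immaterial) difference is the witness itself: you conjugate the rectangular identity, yielding $E^*Q=T\begin{bmatrix}\Sigma_r & 0\\ 0 & 0\end{bmatrix}T^*\geq 0$, whereas the paper places $\Sigma_k$ in the leading block and an identity block in the trailing corner, yielding $\Sigma_k^2$ there instead.
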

\begin{proof}
It is rightforthly verified that~$V_Q$ is a convex set. Further,~$S$ is in view of Lemma~\ref{lem:minors_are_polynomials} Zariski-open. By Lemma~\ref{lem:convex_reference_sets},~$S$ is relative generic in~$V_Q$ if, and only if,~$S\cap V_Q$ is non-empty. In view of Lemma~\ref{lem:How_do_feasible_perturbations_look_like}, there are unitary matrices~$P\in\mathbf{Gl}(\mathbb{R}^\ell)$ and~$T\in\mathbf{Gl}(\mathbb{R}^n)$ and a diagonal matrix~$\Sigma\in\mathbf{Gl}(\mathbb{R}^k)$,~$k = \rk Q$, so that
\begin{align*}
PQT = \begin{bmatrix}
\Sigma & 0_{k\times (n-k)}\\
0_{(\ell-k)\times k} & 0_{(\ell-k)\times (n-k)}
\end{bmatrix}.
\end{align*}
In view of Lemma~\ref{lem:How_do_feasible_perturbations_look_like}, we find
\begin{align*}
P^*\left[\begin{array}{c|c}
\Sigma_k & 0_{k\times (n-k)}\\\hline
0_{(\ell-k)\times\ell} & \begin{array}{cc}
I_{\min\set{\ell-k,n-k}} & 0\\
0 & 0
\end{array}
\end{array}\right]T^*\in V_Q\cap S.
\end{align*}
This shows that~$S\cap V_Q$ is non-empty. Thus~$S$ is indeed relative generic in~$V_Q$.
\end{proof}

Lemma~\ref{lem:Hamiltonian_function_generically_full_rank} shows that in the case~$\ell\geq n$ the matrix~$E^*Q$ is generically invertible. We will use a similiar argument to show (essentially analogous to the proof of Lemma~\ref{lem:Hamiltonian_function_generically_full_rank}) that positive semidefinite matrices of the form~$E^*Q$ are generically positive definite.

\begin{Lemma}\label{lem:HUGE_simplification}
Let~$\ell\geq n\in\mathbb{N}^*$. The set
\begin{align*}
S = \set{(E,Q)\in\mathbb{K}^{\ell\times n}\times\mathbb{K}^{\ell\times n}\,\big\vert\,E^*Q = Q^*E>0}
\end{align*}
is relative generic in 
\begin{align*}
V = \set{(E,Q)\in\mathbb{K}^{\ell\times n}\times\mathbb{K}^{\ell\times n}\,\big\vert\,E^*Q = Q^*E\geq 0}.
\end{align*}
\end{Lemma}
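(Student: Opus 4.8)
The plan is to mirror the proof of Lemma~\ref{lem:Hamiltonian_function_generically_full_rank}, exploiting that for $\ell\geq n$ the matrix $E^*Q$ is $n\times n$ and that an Hermitian positive \emph{semidefinite} matrix of this size is positive definite precisely when it has full rank~$n$, i.e.\ when $\det(E^*Q)\neq 0$. Since $E^*Q=Q^*E\geq 0$ throughout $V$ and $\det(E^*Q)$ is a (real) polynomial in the entries of $E$ and $Q$, the set $S\cap V$ is relative Zariski-open in $V$; hence, by Lemma~\ref{lem:rel_gen_char}, the whole difficulty reduces to establishing Euclidean density of $S$ in $V$.

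First I would shrink the reference set. Put $\widetilde V:=\set{(E,Q)\in V\,\big\vert\,\rk E=n}$. The condition $\rk E=n$ is Zariski-open by Lemmata~\ref{lem:important_property} and~\ref{lem:minors_are_polynomials}, so $\widetilde V$ is relative Zariski-open in $V$. For density of $\widetilde V$ in $V$, fix $(E,Q)\in V$ and \emph{keep $Q$ fixed}: then $E\in V_Q:=\set{E'\,\big\vert\,E'^*Q=Q^*E'\geq 0}$, and Lemma~\ref{lem:semidefinite_Hamiltonian_pair_pointwise_invertible} together with Proposition~\ref{prop:properties_rel_gen}\,(a) supplies full-rank matrices $E'$ arbitrarily close to $E$ inside $V_Q$; each such pair $(E',Q)$ lies in $\widetilde V$. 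Thus $\widetilde V$ is relative generic in $V$ by Lemma~\ref{lem:rel_gen_char}, and by Proposition~\ref{prop:properties_rel_gen}\,(e) it then suffices to prove that $S$ is relative generic in the smaller reference set $\widetilde V$.

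On $\widetilde V$ we have $k=\rk E=n$, so Lemma~\ref{lem:How_do_feasible_perturbations_look_like} furnishes, for each fixed $(E,Q)\in\widetilde V$, unitary (orthogonal) $P,T$ and an invertible diagonal $\Sigma_n$ with $PET=\left[\begin{smallmatrix}\Sigma_n\\0\end{smallmatrix}\right]$, and the $Q$-perturbations that keep the pair in $\widetilde V$ correspond bijectively, via the homeomorphism $Q'\mapsto PQ'T$, to the set $V_{\Sigma_n}\cap\set{M\,\big\vert\,\Sigma_n M\geq 0}$, where $M$ is the top $n\times n$ block; here I invoke~\eqref{eq:proof5}. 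This local reference set is \emph{convex}, being the intersection of the vector space $V_{\Sigma_n}$ with the preimage of the positive semidefinite cone under the linear map $M\mapsto\Sigma_n M$. After the unitary congruence $T^*(\cdot)T$ of~\eqref{eq:proof3}, the condition $E^*Q>0$ is equivalent to $\Sigma_n M>0$, i.e.\ to $\det(\Sigma_n M)\neq 0$, so the positive-definite members form a relative Zariski-open subset. By Lemma~\ref{lem:convex_reference_sets} this subset is relative generic in the convex local reference set as soon as it is non-empty, and $\left[\begin{smallmatrix}\Sigma_n^{-1}\\0\end{smallmatrix}\right]$ (for which $\Sigma_n M=I_n>0$) witnesses non-emptiness. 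Transporting back through $P,T$ shows that every neighbourhood of $(E,Q)$ meets $S\cap\widetilde V$; combined with relative Zariski-openness this yields, by Lemma~\ref{lem:rel_gen_char}, that $S$ is relative generic in $\widetilde V$, completing the argument.

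The main obstacle is the possible rank deficiency of $E$: the normal form of Lemma~\ref{lem:How_do_feasible_perturbations_look_like} makes $E^*Q$ and $\Sigma_k$ square of full size $n$ only when $\rk E=n$, and perturbing $Q$ alone can never repair a deficient $E$ without leaving the positive semidefinite cone. The two-step reduction above—first repairing the rank of $E$ by a $Q$-preserving perturbation inside $V_Q$, and only afterwards perturbing $Q$—is exactly what circumvents this, after which the positivity step is the same convexity-plus-non-emptiness argument used in Lemma~\ref{lem:Hamiltonian_function_generically_full_rank}.
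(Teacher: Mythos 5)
Your proof is correct, but its second half takes a genuinely different route from the paper's. Both arguments begin the same way: relative Zariski-openness of $S\cap V$ via $\det(E^*Q)\neq 0$ reduces everything to Euclidean density, and both repair the rank of $E$ first, using Lemma~\ref{lem:semidefinite_Hamiltonian_pair_pointwise_invertible} (you package this as a reference-set reduction to $\widetilde V=\set{(E,Q)\in V\,\big\vert\,\rk E=n}$ via Lemma~\ref{lem:rel_gen_char} and Proposition~\ref{prop:properties_rel_gen}\,(e); the paper does it as an inline $\tfrac{\varepsilon}{2}$-perturbation of $E$ within $V_Q$). The divergence is in the density-in-$Q$ step for fixed full-rank $E$. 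The paper argues constructively: it diagonalizes $H^*Q$ by the spectral theorem, solves $H^*\Delta=O\,\mathrm{diag}(0_{k\times k},I_{n-k})\,O^*$ for $\Delta$ (possible since $H^*$ is surjective when $\rk H=n\leq\ell$), and adds the explicit multiple $\tfrac{\varepsilon}{2\norm{\Delta}_{\ell,n}}\Delta$ to $Q$ to lift the zero eigenvalues while staying Hermitian. You instead argue softly: for fixed full-rank $E$, the admissible perturbed matrices $Q'$ form a convex set, which the normal form of Lemma~\ref{lem:How_do_feasible_perturbations_look_like} identifies with $V_{\Sigma_n}\cap\set{\left[\begin{smallmatrix}M\\H_1\end{smallmatrix}\right]\,\big\vert\,\Sigma_n M\geq 0}$; the positive-definite members are cut out by non-vanishing of the polynomial $\det(\Sigma_n M)$, and the witness $\left[\begin{smallmatrix}\Sigma_n^{-1}\\0\end{smallmatrix}\right]$ shows non-emptiness, so Lemma~\ref{lem:convex_reference_sets} yields relative genericity and hence density, which transports back through the unitary homeomorphism $Q'\mapsto PQ'T$. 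Your route avoids the spectral theorem and all explicit estimates, and it makes the treatment of semidefiniteness uniform with the convexity machinery the paper already uses in Lemmata~\ref{lem:Hamiltonian_pairs_full_rank_pointwise} and~\ref{lem:Hamiltonian_function_generically_full_rank}; it even gives the slightly stronger intermediate fact that for each fixed full-rank $E$ positive definiteness of $E^*Q'$ is relative generic among admissible $Q'$. What the paper's construction buys is a concrete quantitative perturbation — one sees exactly which direction $\Delta$ repairs definiteness and by how much — without invoking Lemma~\ref{lem:convex_reference_sets}. Both are complete proofs.
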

\begin{proof}
A semi-definite matrix is not definite if, and only if, it has non-empty kernel. Thus, we have
\begin{align*}
V\setminus S = \set{(E,Q)\in \mathbb{K}^{\ell\times n}\times\mathbb{K}^{\ell\times n}\,\big\vert\,\det E^*Q = 0}\cap V.
\end{align*}
Since the determinant is a polynomial,~$V\setminus S$ is an algebraic set. Equivalently,~$V\cap S$ is Zariski-open. By Lemma~\ref{lem:rel_gen_char} and since~$S\subseteq V$, it remains to show that~$S$ is Euclidean dense in~$V$. Define the norm
\begin{align*}
\norm{\cdot}':\mathbb{K}^{\ell\times n}\times\mathbb{K}^{\ell\times n}\to\mathbb{R}_{\geq 0},\qquad (E,Q)\mapsto\max\set{\norm{E}_{\ell,n},\norm{Q}_{\ell,n}}
\end{align*}
for some operator norm~$\norm{\cdot}_{\ell,n}$ on~$\mathbb{K}^{\ell\times n}$. Since all norms on~$\mathbb{K}^{\ell\times n}\times\mathbb{K}^{\ell\times n}$ are equivalent, it suffices to prove density w.r.t.~$\norm{\cdot}'$. Let~$(E,Q)\in V$ and~$\varepsilon>0$. In view of Lemma~\ref{lem:semidefinite_Hamiltonian_pair_pointwise_invertible}, there is some~$H\in\mathbb{K}^{\ell\times n}$ with~$\norm{E-H}_{\ell,n}<\varepsilon$,~$\rk H = n$ and~$H^*Q = Q^*H\geq 0$. From the spectral theorem for symmetric matrices (see~\cite[Theorem 2.5.3]{HornJohn12}), we conclude that there is some orthogonal matrix~$O\in\mathbf{Gl}(\mathbb{K}^n)$ so that
\begin{align*}
O^*H^*QO = \begin{bmatrix}
\lambda_1\\
& \ddots\\
& & \lambda_n
\end{bmatrix}
\end{align*}
for some~$\lambda_1,\ldots,\lambda_n\in\mathbb{R}_{\geq 0}$. 
Without loss of generality we may assume that the~$\lambda_i$'s are ordered in such a manner that~$\lambda_1\geq \cdots\geq \lambda_k > 0 = \lambda_{k+1} = \cdots = \lambda_n$. Since~$H$ has full rank and~$\ell\geq n$, the columns of~$H^*$ contain a basis of~$\mathbb{K}^n$. Therefore, for each~$x\in\mathbb{K}^n$, there is some~$y\in\mathbb{K}^\ell$ with~$x = H^*y$. Especially, there is some~$\Delta\in\mathbb{K}^{\ell\times n}$ so that 
\begin{align*}
H^*\Delta = O\begin{bmatrix}
0_{k\times k}\\
& I_{n-k}
\end{bmatrix}O^*.
\end{align*}
Since the right-hand side of this equation is Hermitian,~$H^*\Delta = \Delta^* H$. Invoking unitarity (orthogonality) of~$O$, we have equivalently
\begin{align*}
O^*H^*\Delta O = \begin{bmatrix}
0_{k\times k}\\
& I_{n-k}
\end{bmatrix}.
\end{align*}
Therefore~$H^*(Q+\frac{\varepsilon}{\norm{\Delta}_{\ell,n}}\Delta)$ is Hermitian with
\begin{align*}
O^*H^*\left(Q+\frac{\varepsilon}{2\norm{\Delta}_{\ell,n}}\Delta\right)O = \begin{bmatrix}
\lambda_1\\
& \ddots\\
& & \lambda_k\\
& & & \frac{\varepsilon}{2\norm{\Delta}_{\ell,n}}\\
& & & & \ddots\\
& & & & & \frac{\varepsilon}{2\norm{\Delta}_{\ell,n}}
\end{bmatrix}>0.
\end{align*}
Hence, the assertion follows from
\begin{align*}
    H^*(Q+\tfrac{\varepsilon}{\norm{\Delta}_{\ell,n}}\Delta) = (Q+\tfrac{\varepsilon}{\norm{\Delta}_{\ell,n}}\Delta)^*H 
    >0~\text{ and}~\norm{(E,Q)-(H,Q+\tfrac{\varepsilon}{\norm{\Delta}_{\ell,n}}\Delta)}'<\varepsilon.
\end{align*}
\end{proof}

Before we prove that port-Hamiltonian descriptor systems are generically controllable, however, we need an auxiliary result, where we show that the rank of the block matrices that appear in the algebraic criteria in Proposition~\ref{Def-Prop:Contr} is generically full. Recall that our reference sets of port-Hamiltonian matrix tuples are defined as
\begin{align*}
\Sigma_{\ell,n,m}^{H} &= \set{(E,J,Q,B)\in\mathbb{K}^{\ell\times n}\times\mathbb{K}^{\ell\times \ell}\times\mathbb{K}^{\ell\times n}\times\mathbb{K}^{\ell\times m}\,\big\vert\,J=-J^*,E^*Q = Q^*E\geq 0},\\[2ex]
\Sigma_{\ell,n,m}^{sdH} &= \set{(E,J,R,Q,B)\,\big\vert\,J=-J^*,R = R^*\geq 0,E^*Q = Q^*E\geq 0}.
\end{align*}

\begin{Proposition}\label{lem:main_result_1}
Let~$\ell,n,m\in\mathbb{N}^*$. The sets
\begin{enumerate}[(a)]
\item~$S_{(a)} := \set{(E,J,Q,B)\,\big\vert\,\rk[E,B] = \min\set{\ell,n+m}}$
\item~$S_{(b)} := \set{(E,J,Q,B)\,\big\vert\,\rk[E,JQ,B] = \min\set{\ell,2n+m}}$
\item~$S_{(c)} := \set{(E,J,Q,B)\,\big\vert\,\rk_{\mathbb{K}(x)}[xE-JQ,B] = \min\set{\ell,n+m}}$
\item~$S_{(d)} := \set{(E,J,Q,B)\,\left\vert\,\begin{array}{l}\forall Z\in\mathbb{K}^{n-\rk E}~\text{with}~\im Z = \ker E\\: \rk[E,JQZ,B] = \min\set{\ell,n+m}\end{array}\right.}$
\end{enumerate}
are relative generic in~$\Sigma_{\ell,n,m}^H$. The sets
\begin{enumerate}[(a)]
\setcounter{enumi}{4}
\item~$S_{(e)} := \set{(E,J,R,Q,B)\,\big\vert\,\rk[E,B] = \min\set{\ell,n+m}}$
\item~$S_{(f)} := \set{(E,J,R,Q,B)\,\big\vert\,\rk[E,(J-R)Q,B] = \min\set{\ell,2n+m}}$
\item~$S_{(g)} := \set{(E,J,R,Q,B)\,\big\vert\,\rk_{\mathbb{R}(x)}[xE-(J-R)Q,B] = \min\set{\ell,2n+m}}$
\item~$S_{(h)} := \set{(E,J,R,Q,B)\,\left\vert\,\begin{array}{l}\forall Z\in\mathbb{K}^{n\times (n-\rk E)}~\text{with}~\ker E = \im Z\\: \rk[E,(J-R)QZ,B] = \min\set{\ell,n+m}\end{array}\right.}$
\end{enumerate}
are relative generic in~$\Sigma_{\ell,n,m}^{sdH}$ and~$\Sigma_{\ell,n,m}^{dH}$. The sets
\begin{enumerate}[(a)]
\setcounter{enumi}{8}
\item~$S_{(i)} := \set{(E,J,Q,B)\,\big\vert\,\forall\lambda\in\mathbb{C}:\rk_{\mathbb{C}}[\lambda E-JQ,B] = \min\set{\ell,n+m}}$
\item~$S_{(j)} := \set{(E,J,R,Q,B)\,\big\vert\,\forall\lambda\in\mathbb{C}:\rk_{\mathbb{C}}[\lambda E-(J-R)Q,B] = \min\set{\ell,n+m}}$
\end{enumerate}
are relative generic in~$\Sigma_{\ell,n,m}^H$ and~$\Sigma_{\ell,n,m}^{sdH}$, resp., if, and only if,~$\ell\neq n+m$; otherwise their complement is relative generic in the respective reference set.
\end{Proposition}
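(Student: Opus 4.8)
The plan is to prove parts (a)--(h) by one common template and then to treat the delicate parts (i)/(j) separately, splitting according to whether $\ell = n+m$ or $\ell\neq n+m$. For each of (a)--(h) the target set is cut out by the (non-)vanishing of minors of a block matrix whose entries are affine in the parameters, so Lemma~\ref{lem:important_property} together with Lemma~\ref{lem:minors_are_polynomials} shows that the set, intersected with the reference set, is relative Zariski-open. First I would shrink the reference set by means of Proposition~\ref{prop:properties_rel_gen}\,(e): Lemma~\ref{lem:Hamiltonian_function_generically_full_rank}, Lemma~\ref{lem:semidefinite_Hamiltonian_pair_pointwise_invertible} and Lemma~\ref{lem:HUGE_simplification} let me assume that $E$, $Q$ and $E^*Q$ have full rank. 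Fixing a base point and passing, as in Lemma~\ref{lem:How_do_feasible_perturbations_look_like}, to the convex slice $V_{\Sigma_k}$ of admissible perturbations (together with the parameters $J=-J^*$ and $B$, which range over linear, hence convex, sets), I would apply Lemma~\ref{lem:convex_reference_sets}: relative genericity over this convex reference set reduces to exhibiting a single witness at which the relevant minor is non-zero, and density then transports the statement back to $\Sigma_{\ell,n,m}^H$, $\Sigma_{\ell,n,m}^{sdH}$ or $\Sigma_{\ell,n,m}^{dH}$. Each witness is a block matrix assembled from identities and zeros, so the verification is routine.

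The heart of the proposition is (i)/(j). Suppose first that $\ell = n+m$. Then $[\lambda E - JQ, B]$ is square, and expanding $\det[\lambda E - JQ, B]$ multilinearly in its first $n$ columns shows that it is a polynomial in $\lambda$ whose coefficient of $\lambda^{n}$ equals $\det[E,B]$. On the set $S_{(a)}$ one has $\det[E,B]\neq 0$, so this polynomial has degree $n\geq 1$ and, by the fundamental theorem of algebra, a complex root $\lambda_0$ at which $\rk_{\mathbb{C}}[\lambda_0 E - JQ, B]<\ell$; hence $S_{(a)}\subseteq S_{(i)}^c$. Since $S_{(a)}$ is relative generic by part (a), Proposition~\ref{prop:properties_rel_gen}\,(b) shows that $S_{(i)}^c$ is relative generic, and Proposition~\ref{prop:properties_rel_gen}\,(f) then shows $S_{(i)}$ is not. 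The argument for $S_{(j)}$ is identical with $S_{(e)}$ and $(J-R)Q$ in place of $S_{(a)}$ and $JQ$.

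Now suppose $\ell\neq n+m$, so that $[xE-JQ,B]$ is non-square and, because $n+m\geq 2$, possesses at least two maximal minors $p(x),q(x)$ whose coefficients are, by Lemma~\ref{lem:minors_are_polynomials}, polynomials in the entries of $(E,J,Q,B)$. The rank over $\mathbb{C}$ drops at some $\lambda$ only if every maximal minor vanishes there; in particular, by Lemma~\ref{Def:Resultant}, the Zariski-open locus $\set{\mathrm{Res}(p,q)\neq 0}$ on which $p$ and $q$ are coprime is contained in $S_{(i)}$. After the same reduction to a convex reference set as in the first paragraph, Lemma~\ref{lem:convex_reference_sets} reduces the claim to producing one admissible tuple for which $[\lambda E-JQ,B]$ has rank $\min\set{\ell,n+m}$ for every $\lambda\in\mathbb{C}$ (equivalently, for which $p,q$ are coprime); the resultant enters only to guarantee that the witnessing set is Zariski-open.

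The main obstacle is precisely this witness, which I would organise by cases. If $m\geq\ell$ one may take $B$ of full row rank, so that an $\ell\times\ell$ minor built from columns of $B$ alone is a non-zero constant and thus coprime with every other minor. If $m<\ell<n+m$, I would instead realise a controllable port-Hamiltonian ordinary differential equation in the spirit of~\cite[Prop.\,III.1]{Kirc21pp}, using the skew-symmetry of $J$ so that the remaining inputs cover the purely imaginary eigenvalues, and embed it into the descriptor format while respecting $E^*Q = Q^*E\geq 0$ (and, for (j), $R = R^*\geq 0$). A dual construction, forcing a full-column-rank block so that $\lambda E - JQ$ is injective for all $\lambda$, handles $\ell>n+m$. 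The technical crux is checking that these witnesses lie in the constrained reference set and that the two chosen minors are coprime; the full rank of $E^*Q$ secured earlier via Lemma~\ref{lem:HUGE_simplification} is what keeps the witnesses admissible.
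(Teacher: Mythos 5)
Your proposal follows the same route as the paper's proof: Zariski-openness of the minor conditions via Lemmata~\ref{lem:important_property} and~\ref{lem:minors_are_polynomials}, reduction to convex slices with $E$ fixed so that Lemma~\ref{lem:convex_reference_sets} applies (together with Lemma~\ref{lem:HUGE_simplification} and Lemma~\ref{lem:rel_open_reference_set} to dispose of the semidefiniteness constraint), the fundamental-theorem-of-algebra argument for $\ell=n+m$, and the Sylvester-resultant construction for $\ell\neq n+m$. Parts (a)--(h), the case $\ell=n+m$ of (i)/(j), and your subcase $m\geq\ell$ (where a maximal minor built from columns of $B$ alone is a nonzero constant, hence coprime to everything) are correct as outlined.

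The gap sits exactly where you place the ``technical crux'': the witnesses for (i)/(j) when $\ell\neq n+m$, which you do not construct, and, more importantly, your specification of what a witness must satisfy is too weak. Your parenthetical claim that exhibiting a tuple with $\rk_{\mathbb{C}}[\lambda E-JQ,B]=\min\set{\ell,n+m}$ for all $\lambda\in\mathbb{C}$ is ``equivalent'' to exhibiting one at which $p,q$ are coprime fails in the direction you need: full rank at every $\lambda$ only means the maximal minors have no \emph{common} zero, and three or more polynomials can pairwise share roots without a joint one (e.g.\ $x(x-1)$, $x(x-2)$, $(x-1)(x-2)$). Hence a point of $S_{(i)}$ need not lie in the Zariski-open set $\set{p_{i,j}\neq 0}$ whose nonemptiness on the convex slice is what Lemma~\ref{lem:convex_reference_sets} actually requires; the witness must be engineered so that two \emph{specific} maximal minors are coprime and, in view of~\eqref{eq:properties_p_polynomials}, at least one of them attains its maximal formal degree (otherwise the fixed-size Sylvester determinant vanishes even for coprime minors). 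This is precisely what the paper's explicit matrices $J_0$, $Q(\beta)$, $B(\delta,\xi)$ and $B_0'$ are built to guarantee ($\deg M_1^{T}=n=\gamma_1^{T}$ while $M_2^{T}$ is a nonzero constant), so your plan for $m<\ell<n+m$ of embedding a controllable pH ODE is incomplete until two named minors are shown coprime with the degree condition. Finally, for $\ell>n+m$ your target ``$\lambda E-JQ$ injective for all $\lambda$'' is insufficient even for membership in $S_{(i)}$: injectivity only gives $\rk[\lambda E-JQ,B]\geq n$, whereas rank $n+m$ additionally requires the columns of $B$ to remain independent of $\im(\lambda E-JQ)$ for every $\lambda$; the paper achieves this by making a maximal minor drawing columns from both blocks equal to $(-1)^n$.
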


\begin{proof} This proof is split into the two cases~$\ell\geq n$ and~$\ell <n$.
\\

\noindent
Case~$\ell\geq n$:  \ 
We find convenient reference sets to simplify our calculations. 
Note that
\begin{align*}
\Sigma_{\ell,n,m}^{H}\mathbb{C}ong\set{(E,Q)\in\mathbb{K}^{\ell\times n}\times \mathbb{K}^{\ell\times n}\,\big\vert\,E^*Q = Q^*E\geq 0}\times\set{J\in\mathbb{K}^{\ell\times \ell}\,\big\vert\,J = -J^*}\times\mathbb{K}^{\ell\times m}.
\end{align*}
In Lemma~\ref{lem:HUGE_simplification} we have shown that~$\set{(E,Q)\in\mathbb{K}^{\ell\times n}\times \mathbb{K}^{\ell\times n}\,\big\vert\,E^*Q = Q^*E > 0}$ is relative generic in~$\set{(E,Q)\in\mathbb{K}^{\ell\times n}\times \mathbb{K}^{\ell\times n}\,\big\vert\,E^*Q = Q^*E\geq 0}$. In view of Proposition~\ref{prop:properties_rel_gen}\,(d), we conclude that
\begin{align*}
V_1 := \set{(E,J,Q,B)\in\Sigma_{\ell,n,m}^{H}\,\big\vert\, E^*Q = Q^*E>0}\subseteq\Sigma_{\ell,n,m}^{H}
\end{align*}
is a relative generic subset of~$\Sigma_{\ell,n,m}^H$. Thus, in view of Proposition~\ref{prop:properties_rel_gen}\,(e), relative genericity in~$\Sigma_{\ell,n,m}^H$ and~$V_1$ are equivalent. By definition of~$\Sigma_{\ell,n,m}^H$ and continuity of the spectrum,~$V_1$ is a relatively open subset of
\begin{align*}
V_1' := \set{(E,J,Q,B)\in\mathbb{K}^{\ell\times n}\times\mathbb{K}^{\ell\times \ell}\times\mathbb{K}^{\ell\times n}\times\mathbb{K}^{\ell\times m}\,\big\vert\,E^*Q = Q^*E, J^* = -J}.
\end{align*}
By Lemma~\ref{lem:rel_open_reference_set} it suffices to prove the assertion for the reference set~$V_1'$ instead of~$\Sigma_{\ell,n,m}^H$. For~$\Sigma_{\ell,n,m}^{sdH}$ we have
\begin{align*}
\Sigma_{\ell,n,m}^{sdH} \mathbb{C}ong & \Sigma_{\ell,n,m}^{H}\times\set{R\in\mathbb{K}^{\ell\times \ell}\,\big\vert\,R^* = R\geq 0}.
\end{align*}
Since~$V_1$ is a relative generic subset of~$\Sigma_{\ell,n,m}^H$, we conclude from Lemma~\ref{lem:definite_is_generic} and Proposition~\ref{prop:properties_rel_gen}\,(d) that 
\begin{align*}
\set{(E,J,R,Q,B)\,\big\vert\,E^*Q = Q^*E > 0, J^* = -J, R^* = R > 0}\mathbb{C}ong V_1\times\set{R\,\big\vert\,R^* = R > 0}
\end{align*}
is a relative generic subset of~$\Sigma_{\ell,n,m}^{sdH}$. 
By continuity of the spectrum, we conclude with Proposition~\ref{prop:properties_rel_gen}\,(e) and Lemma~\ref{lem:rel_open_reference_set} that it suffices to consider the reference set
\begin{align*}
V_2' := & \set{(E,J,R,Q,B)\,\big\vert\,E^*Q = Q^*E, J^* = -J, R^* = R} \mathbb{C}ong V_1'\times\set{R\in\mathbb{K}^{\ell\times \ell}\,\big\vert\,R^* = R}
\end{align*}
instead of~$\Sigma_{\ell,n,m}^{sdH}$.
We consider therefore, without loss of generality, the reference set~$V_1'$ instead of~$\Sigma_{\ell,n,m}^{H}$, and the reference set~$V_2'$ instead of~$\Sigma_{\ell,n,m}^{sdH}$.
\\

\noindent
(a)\quad 
Since all minors are polynomials,~$S_{(a)}$ is in view of Lemma~\ref{lem:minors_are_polynomials} Zariski-open. In view of Lemma~\ref{lem:rel_gen_char}, it remains to prove that~$S_{(a)}\cap V_1'$ is Euclidean dense in~$V_1'$. 
Let~$(E,J,Q,B)\in V_1'$ and~$\varepsilon>0$. Since relative generic sets are especially Euclidean dense, see Lemma~\ref{prop:properties_rel_gen}\,(a), Lemma~\ref{lem:Hamiltonian_pairs_full_rank_pointwise}\,(ii) yields that there are some~$\widetilde{E},\widetilde{Q}\in\mathbb{K}^{\ell\times n}$ so that~$\widetilde{E}^*\widetilde{Q} = \widetilde{Q}^*\widetilde{E}$,~$\rk \widetilde{E} = \min\set{\ell,n} = n$, and
\begin{align*}
\max\set{\norm{E-\widetilde{E}}_{\ell,n},\norm{Q-\widetilde{Q}}_{\ell,n}}<\varepsilon.
\end{align*}
Consider the set
\begin{align*}
S_{(a)}(\widetilde{E}) := \set{B\in\mathbb{K}^{\ell\times m}\,\big\vert\,\rk[\widetilde{E},B] = \min\set{\ell,n+m}}.
\end{align*}
By Lemma~\ref{lem:minors_are_polynomials},~$S_{(a)}(\widetilde{E})$ is Zariski-open. Furthermore, in view of the basis completion lemma and since~$\rk\widetilde{E} = n$, it can be readily seen that~$S_{(a)}(\widetilde{E})\neq\emptyset$. Thus Lemma~\ref{lem:gen_characterisation} yields that~$S_{(a)}(\widetilde{E})$ is generic. Especially, we conclude that there is some~$\widetilde{B}\in\mathbb{K}^{\ell\times m}$ so that~$\rk[\widetilde{E},\widetilde{B}] = \min\set{\ell,n+m}$ and~$\norm{B-\widetilde{B}}_{\ell,m}<\varepsilon$. Thus, we have found some~$(\widetilde{E},J,\widetilde{Q},\widetilde{B})\in S_{(a)}\cap V_1'$ with
\begin{align*}
\max\set{\norm{E-\widetilde{E}}_{\ell,n},\norm{Q-\widetilde{Q}}_{\ell,n},\norm{B-\widetilde{B}}_{\ell,m}}<\varepsilon.
\end{align*}
This shows that~$S_{(a)}\cap V_1'$ is indeed Euclidean dense and therefore relative generic in~$V_1'$.
\\
\color{black}

\noindent
(b)\quad 
We distinguish further between~$\ell\leq n+m$ and~$\ell>n+m$. 
\\
If~$\ell\leq n+m$, then we have~$\ell = \min\set{\ell,n+m} = \min\set{\ell,2n+m}$ and thus the inclusion~$S_{(a)}\subseteq S_{(b)}$ holds true. Since we have already shown that~$S_{(a)}$ is relative generic in~$V_1'$, Proposition~\ref{prop:properties_rel_gen}\,(b) yields that~$S_{(b)}$ is relative generic in~$V_1'$.
\\ 
Let~$\ell>n+m$.
In view of Lemma~\ref{lem:minors_are_polynomials},~$S_{(b)}$ is Zariski open. By Lemma~\ref{lem:rel_gen_char}, it remains to prove that~$S_{(b)}\cap V_1'$ is Euclidean dense in~$V_1'$.
Let~$(E,J,Q,B)\in V_1'$ and~$\varepsilon>0$. Analogously to (a), Lemma~\ref{lem:Hamiltonian_pairs_full_rank_pointwise}\,(ii) implies the existence of some~$\widetilde{E},\widetilde{Q}\in\mathbb{K}^{\ell\times n}$ so that~$\rk\widetilde E = n$,~$\widetilde{E}^*\widetilde{Q} = \widetilde{Q}^*\widetilde{E}$ and
\begin{align*}
\max\set{\| {E-\widetilde{E}}\|_{\ell,n}, \|{Q-\widetilde{Q}}\|_{\ell,n}}< \varepsilon/{2}.
\end{align*}
Consider the convex reference set
\begin{align*}
V_{(b)}(\widetilde{E}) = \set{(J,Q,B)\in\mathbb{K}^{\ell\times\ell}\times\mathbb{K}^{\ell\times n}\times\mathbb{K}^{\ell\times m}\,\big\vert\,J = -J^*, \widetilde{E}^*Q = Q^*\widetilde{E}}
\end{align*}
and the set
\begin{align*}
S_{(b)}(\widetilde{E}) = \set{(J,Q,B)\in V_{(b)}(\widetilde{E})\,\big\vert\,\rk[\widetilde{E},JQ,B] = \min\set{\ell,2n+m}}.
\end{align*}
We show that~$S_{(b)}(\widetilde{E})$ is relative generic in~$V_{(b)}(\widetilde{E})$. In view of Lemma~\ref{lem:minors_are_polynomials},~$S_{(b)}(\widetilde{E})\subseteq V_{(b)}(\widetilde{E})$ is relative Zariski-open in~$V_{(b)}(\widetilde{E})$. Since~$V_{(b)}(\widetilde{E})$ is convex, Lemma~\ref{lem:convex_reference_sets} yields that it suffices to prove that~$S_{(b)}(\widetilde{E})\neq\emptyset$. In view of Lemma~\ref{lem:How_do_feasible_perturbations_look_like}, there are unitary (orthogonal) matrices~$P\in\mathbf{Gl}(\mathbb{K}^\ell)$,~$T\in\mathbf{Gl}(\mathbb{K}^n)$ and some diagonal matrix~$\Sigma\in\mathbb{R}^{n\times n}$ so that
\begin{align*}
P\widetilde{E}T = \begin{bmatrix}
\Sigma\\
0_{(\ell-n)\times n}
\end{bmatrix}.
\end{align*}
Choose the skew-symmetric matrix~$J'\in\mathbb{K}^{\ell\times \ell}$ with
\begin{align*}
\forall i,j\in\mathbb{N}^*_{\leq\ell}: J'_{i,j} = \begin{cases}
1, & i = n+m+j,\\
-1, & j = n+m+i,\\
0, & \text{else},
\end{cases}
\end{align*}
and put
\begin{align*}
\widehat{J}' = P^*J'P,\quad \widehat{Q}' = \widetilde{E},\quad\text{and}\quad\widehat{B}' := P^*\begin{bmatrix}
0_{n\times m}\\
I_m\\
0_{(\ell-n-m)\times m}
\end{bmatrix}.
\end{align*}
Let~$\kappa :=\min\set{\ell-n-m,n}$. Since~$P$ and~$T$ are unitary and~$\Sigma\in\mathbb{K}^{n\times n}$ is an invertible diagonal matrix, we have
\begin{align*}
\rk [\widetilde{E},\widehat{J}'\widehat{Q}',\widehat{B}'] & 
= \rk  P [\widetilde{E},\widehat{J}'\widehat{Q}',\widehat{B}']\begin{bmatrix}
T\\ & T\\ & & I_m
\end{bmatrix}\\[1ex]
& = \rk[P\widetilde{E}T,P\widehat{J}'P^*P\widehat{Q}'T,P\widehat{B}']
\\[1ex]
& = \rk\left[\begin{array}{c|c|c}
\Sigma & 0_{n\times n} & 0_{n\times m}\\
0_{m\times n} & 0_{m\times n} & I_m\\\hline
0_{\kappa\times n} & \begin{array}{cccccccc}
\Sigma_{1,1} & \star & \cdots & \star & \star & \cdots & \star\\
 & \ddots & \ddots & \vdots & \ddots & \ddots & \vdots\\
 & & \Sigma_{\kappa-1,\kappa-1} & \star & \star & \cdots & \star\\
 & & & \Sigma_{\kappa,\kappa} & \star & \cdots & \star
\end{array} & 0_{\kappa\times m}\\
0_{(\ell-n-m-\kappa)\times n} & 0_{(\ell-n-m-\kappa)\times n} & 
0_{(\ell-n-m-\kappa)\times n}
\end{array}\right]\\
& = n+m+\kappa\\
& = \min\set{\ell,2n+m}.
\end{align*}
This shows that~$(\widehat{J}',\widehat{Q}',\widehat{B}')\in S_{(b)}(\widetilde{E})$ and thus the latter set is indeed non-empty. We conclude that~$S_{(b)}(\widetilde{E})$ is relative generic in~$V_{(b)}(\widetilde{E})$. Hence, there are~$(\widehat{J},\widehat{Q},\widehat{B})\in S_{(b)}(\widetilde{E})$ so that
\begin{align*}
\max\set{\norm{\widehat{J}-J}_{\ell,\ell},\norm{\widehat{Q}-\widetilde{Q}}_{\ell,n},\norm{\widehat{B}-B}_{\ell,m}}<\frac{\varepsilon}{2}.
\end{align*}
From the definition of~$S_{(b)}(\widetilde{E})$, we conclude that~$(\widetilde{E},\widehat{J},\widehat{Q},\widehat{B})\in S_{(b)}\cap V_1'$, and the triangle inequality yields
\begin{align*}
\max\set{\norm{\widetilde{E}-E}_{\ell,n},\norm{\widehat{J}-J}_{\ell,\ell},\norm{\widehat{Q}-Q}_{\ell,n},\norm{\widehat{B}-B}_{\ell,m}}<\varepsilon.
\end{align*}
This shows that~$S_{(b)}\cap V_1'$ is indeed Euclidean dense in~$V_1'$, and therefore relative generic in~$V_1'$.
\\

\noindent
(c)\quad 
Analogously to the proof of Lemma~\ref{lem:minors_are_polynomials}, a simple application of the Leibniz formula for the determinant yields with Lemma~\ref{lem:important_property} that the implication
\begin{align*}
\rk_{\mathbb{K}}[E,B] = \min\set{\ell,n+m}\implies \forall A\in\mathbb{K}^{\ell\times n}: \rk_{\mathbb{K}(x)}[xE-A,B] = \min\set{\ell,n+m}
\end{align*}
holds true for all~$E\in\mathbb{K}^{\ell\times n}$ and~$B\in\mathbb{K}^{\ell\times m}$. Thus, we have~$S_{(a)}\subseteq S_{(c)}$. Since we have proven that~$S_{(a)}$ is relative generic in~$V_1'$, we conclude from Proposition~\ref{prop:properties_rel_gen}\,(b) that~$S_{(c)}$ is relative generic in~$V_1'$.
\\

\noindent
(d)\quad 
Since~$\rk E = n$ yields~$\ker E = \set{0}$, the inclusion
\begin{align*}
S_{(a)}\cap \set{(E,J,Q,B)\,\big\vert\,\rk E = n}\subseteq S_{(d)}
\end{align*}
holds true. Since~$S_{(a)}$ is relative generic in~$V_1'$ by (a) of the present proposition, we conclude from Lemma~\ref{lem:Hamiltonian_pairs_full_rank_pointwise}\,(ii) and Proposition~\ref{prop:properties_rel_gen}\,(c) that the intersection~$S_{(a)}\cap \set{(E,J,Q,B)\,\big\vert\,\rk E = n}$ is relative generic in~$V_1'$. Thus, Proposition~\ref{prop:properties_rel_gen}\,(b) yields that~$S_{(d)}$ is relative generic in~$V_1'$.
\\

\noindent
The proof of (e)--(h) is analogous to the proof of~(a)--(d): While the proof of~(a), (c) and~(d) does not depend on~$J$, their counterparts~(e), (g) and~(h) follow analogously. To prove~(f), we can simply add a summand
$\set{R\in\mathbb{K}^{\ell\times \ell}\,\big\vert R = R^*}$ to the real vector space~$V_{(b)}(\widetilde{E})$ in the proof of~(b) and proceed analogously.
\\

\noindent
(i)\quad 
We proceed with the proof of (i) 
in steps~(i$_1$)--(i$_5$).

\noindent
\textsc{Step}~(i$_1$): \quad 
We show that if~$\ell = n+m$, then~$S_{(i)}^c$ is relative generic in~$V_1'$.
\\
 In view of Laplace's expansion formula, for all~$(E,J,Q,B)\in S_{(a)}$,
\begin{align*}
\det [xE-JQ,B] = x^n\underbrace{\det[E,B]}_{\neq 0} + \text{lower~order~terms}.
\end{align*}
Therefore, the fundamental theorem of algebra yields that there is some~$\lambda\in\mathbb{C}$ so that~$\det [\lambda E-JQ,B] = 0$. This shows that~$S_{(a)}\subseteq S_{(i)}^c$. Since~$S_{(a)}$ is relative generic in~$V_1'$, Proposition~\ref{prop:properties_rel_gen}\,(b) yields that~$S_{(i)}^c$ is relative generic in~$V_{1}'$. Especially~$S_{(i)}$ is not relative generic in~$V_1'$.
\\

\noindent
\textsc{Step}~(i$_2$): \quad
Let~$\ell\neq n+m$ and~$\ell\geq n$. We construct an algebraic set~$\mathbb{V}$ so that~$V_1'\setminus S_{(i)}\subseteq \mathbb{V}$. 
\\
This construction is analogous to the construction in the proof of~\cite[Proposition B.8]{IlchKirc21}, where a similiar result for matrices without structural constraints is shown. Let~$\widetilde{M}_1,\ldots,\widetilde{M}_q$,~$q\in\mathbb{N}$ be all minors of order~$d := \min\set{\ell,n+m}$ w.r.t.~$\mathbb{K}[x]^{\ell\times (n+m)}$ and put, for all~$i\in\mathbb{N}^*_{\leq q}$ and
$P\in\mathbf{Gl}(\mathbb{K}^\ell)$,
\begin{align*}
M_i^{P}:\mathbb{K}^{\ell\times n}\times\mathbb{K}^{\ell\times \ell}\times\mathbb{K}^{\ell\times n}\times\mathbb{K}^{\ell\times m} &\to\mathbb{K}[x],\\ (E,J,Q,B) & \mapsto \widetilde{M}_i\left(P[xE-JQ,B]\right).
\end{align*}
Since the rank of a matrix is invariant under regular transformations from the left and from the right, we have by Lemma~\ref{lem:important_property}~$(E,J,Q,B)\in S_{(i)}$ if, and only if,
\begin{align*}
\forall\lambda\in\mathbb{C}~\exists i\in\mathbb{N}^*_{\leq q}~\exists P\in\mathbf{Gl}(\mathbb{K}^\ell): M_i^{P}(E,J,Q,B)(\lambda)\neq 0.
\end{align*}
Define the maximal degree of~$M_i^{P}$ as
\begin{align*}
\gamma_i^{P} := \max\set{\deg M_i^{P}(E,J,Q,R)\,\big\vert\,(E,J,Q,B)\in\mathbb{K}^{\ell\times n}\times\mathbb{K}^{\ell\times \ell}\times\mathbb{K}^{\ell\times n}\times\mathbb{K}^{\ell\times m}}\geq 0.
\end{align*}
In passing, note that the maximal degree~$\gamma_i^{P}$ is attained at~$(E,J,Q,B)$ if, and only if
\begin{align*}
\deg M_i^{P}(E,0,0,B) = \deg\widetilde{M}_i([xPET,B]) = \gamma_i.
\end{align*}
Since~$(E,0,0,B)\in V_1'$ for all~$E\in\mathbb{K}^{\ell\times n}$ and~$B\in\mathbb{K}^{\ell\times m}$,
\begin{align*}
\gamma_i^{P} = \max\set{\deg M_i^{P}(E,J,Q,R)\,\big\vert\,(E,J,Q,B)\in V_1'}
\end{align*}
and, for all~$\mathcal{S}\subseteq\mathbb{K}^{\ell\times n}\times\mathbb{K}^{\ell\times \ell}\times\mathbb{K}^{\ell\times n}\times\mathbb{K}^{\ell\times m}$,
\begin{align*}
\set{(E,J,Q,B)\in\mathcal{S}\,\big\vert\,\deg M_i^{P}(E,J,Q,B) = \gamma_i} = \set{(E,J,Q,B)\in\mathcal{S}\,\big\vert\, M_i^{P}(E,0,0,B)\neq 0}.
\end{align*}
By Lemma~\ref{lem:minors_are_polynomials}, there are polynomials~$M_{i,k}^{P}\in\mathbb{K}[x_1,\ldots,x_{\ell(2n+m+\ell)}]$, $k \in\set{ 0,\ldots,\gamma_i^{P}}$,~$i\in\mathbb{N}^*_{\leq q}$, so that each~$M_i^{P}$ has the representation
\begin{align*}
\forall (E,J,Q,B)\in\mathbb{K}^{\ell\times n}\times\mathbb{K}^{\ell\times \ell}\times\mathbb{K}^{\ell\times n}\times\mathbb{K}^{\ell\times m}: M_i(E,J,Q,B) = \sum_{k = 0}^{\gamma_i} M_{i,k}^{P}(E,J,Q,B) x^k.
\end{align*}
The~$M_{i,k}^{P}$,~$k \in \set{0,\ldots,\gamma_{i}^{P}}$, which are multivariate polynomials in the entries of~$E,J,Q,B$, are the coefficients of~$M_i^{P}$ in the monomial basis of the univariate polynomials. Using these representations, define, for all~$i,j\in\mathbb{N}^*_{\leq q}$ and~$P\in\mathbf{Gl}(\mathbb{K}^\ell)$, the polynomials
\begin{align}\label{eq:Sylvester_operator}
p_{i,j}^{P}(\cdot)=\det\underbrace{\left[\begin{array}{ccccc|ccccc}
M_{i,0}^{P}(\cdot) & & & & & M_{j,0}^{P}(\cdot) & & &\\
M_{i,1}^{P}(\cdot) & M_{i,0}^{P}(\cdot) & & & & M_{j,1}^{P}(\cdot) & \cdot &\\
\cdot & \cdot & & & & \cdot & \cdot & \cdot & \\
\cdot & \cdot & \cdot & & & \cdot & \cdot & \cdot & M_{j,0}^{P}(\cdot) \\
M_{i,{\gamma_i}}^{P}(\cdot) & M_{i,{{\gamma_i}-1}}^{P}(\cdot) & \cdot & \cdot & & \cdot & \cdot & \cdot & \cdot\\
 & M_{i,{\gamma_i}}^{P}(\cdot) & \cdot & \cdot & & \cdot & \cdot & \cdot & \cdot\\
 & & \cdot & \cdot & M_{i,0}^{P}(\cdot) & M_{j,{{\gamma_j}-1}}^{P}(\cdot) & \cdot & \cdot & \cdot\\
& & \cdot & \cdot & \cdot & M_{j,{{\gamma_j}}}^{P}(\cdot) & \cdot & \cdot & \cdot & \\
& & \cdot & \cdot & \cdot & & \cdot & \cdot & \cdot \\
& & & \cdot & \cdot & & & \cdot & \cdot\\
& & & & M_{i,{\gamma_i}}^{P}(\cdot) & & & &  M_{j,{\gamma_j}}^{P}(\cdot)
\end{array}\right]}_{\in\mathbb{K}[x_1,\ldots,x_{\ell(2n+m+\ell)}]^{(\gamma_i^{P}+\gamma_j^{P})\times (\gamma_i^{P}+\gamma_j^{P})}}.
\end{align}
If ~$M_{i,\gamma_i}^{P}(E,J,Q,B)\neq 0$ and~$M_{j,\gamma_j}^{P}(E,J,Q,B)\neq 0$, then~$p_{i,j}^{P}(E,J,Q,B)$ is the Sylvester resultant (see Lemma~\ref{Def:Resultant}) of the polynomials~$M_i(E,J,Q,B)$ and~$M_j(E,J,Q,B)$. Invoking Lemma~\ref{Def:Resultant} and Laplace's expansion formula~\cite[p.\,203]{Fisc05b}, we have, for all~$\mathfrak{E} = (E,J,Q,B)\in\mathbb{K}^{\ell\times n}\times\mathbb{K}^{\ell\times \ell}\times\mathbb{K}^{\ell\times n}\times\mathbb{K}^{\ell\times m}$, the equivalence
\begin{align}\label{eq:properties_p_polynomials}
p_{i,j}^{P}(\mathfrak{E}) = 0 \iff \big[ M_{i,\gamma_i}^{P}(\mathfrak{E}) = M_{j,\gamma_j}^{P}(\mathfrak{E}) = 0\ \vee\ M_i(\mathfrak{E}),M_j(\mathfrak{E})~\text{not~coprime}\big].
\end{align}
Thus, Lemma~\ref{lem:important_property} yields that the following chain of implications holds true for all~$(E,J,Q,B)\in\mathbb{K}^{\ell\times n}\times\mathbb{K}^{\ell\times \ell}\times\mathbb{K}^{\ell\times n}\times\mathbb{K}^{\ell\times m}$:
\begin{align*}
& \exists P\in\mathbf{Gl}(\mathbb{K}^\ell)~\exists i,j\in\mathbb{N}^*_{\leq q}: p_{i,j}^{P}(E,J,Q,B)\neq 0\\
& \implies \exists i,j\in\mathbb{N}^*_{\leq q}: M_i^{P}(E,J,Q,B), M_j^{P}(E,J,Q,B)~\text{are~coprime}\\
& \implies \forall\lambda\in\mathbb{C}~\exists\iota\in\mathbb{N}^*_{\leq q}: \widetilde{M}_\iota[\lambda PET-PJP^*P^{-*}QT,B] = M_\iota^{P}(E,J,Q,B)(\lambda) \neq 0\\
& \implies \forall\lambda\in\mathbb{C}: \rk_{\mathbb{C}}[\lambda E-JQ,B]) = d.
\end{align*}

\noindent Let~$I\subseteq\mathbb{R}[x_1,\ldots,x_{\ell(2n+m+\ell)}]$ be the ideal generated by the set
\begin{align*}
\set{p_{i,j}^{P}\,\big\vert\,i,j\in\mathbb{N}^*_{\leq q}, P\in\mathbf{Gl}(\mathbb{K}^\ell)}.
\end{align*}
Since multivariable polynomials are a commutative ring, the ideal~$I$ has the form
\begin{align*}
I = \set{\left.\sum_{\alpha = 1}^k r_\alpha p_{i_\alpha,j_\alpha}^{P_\alpha}\,\right\vert\,\begin{array}{l}
k\in\mathbb{N}_0, i_\alpha,j_\alpha\in\mathbb{N}^*_{\leq q},r_\alpha\in\mathbb{R}[x_1,\ldots,x_{\ell(2n+m+\ell)}],\\
P_\alpha\in\mathbf{Gl}(\mathbb{K}^{\ell}),\alpha\in\mathbb{N}^*_{\leq k}
\end{array}},
\end{align*}
see~\cite[p.\,538]{BirkMacl88}. Define
\begin{align*}
\mathbb{V} := \set{(E,J,Q,B)\,\big\vert\,\forall p\in I: p(E,J,Q,B) = 0}
\end{align*}
as the algebraic set generated by the ideal~$I$, see Lemma~\ref{lem:Zariski_und_ideale}. Due to our chain of implications after~\eqref{eq:properties_p_polynomials}, we have
\begin{align*}
(E,J,Q,B)\in\mathbb{V}^c & \implies\exists P\in\mathbf{Gl}(\mathbb{K}^\ell)~\exists i,j\in\mathbb{N}^*_{\leq q}: p_{i,j}^{P}(E,J,Q,B)\neq 0\\
& \implies (E,J,Q,B)\in S_{(i)};
\end{align*}
hence, ~$\mathbb{V}^c\cap V_1'\subseteq S_{(i)}\cap V_1'$ or, equivalently,~$V_1'\cap S_{(i)}^c\subseteq \mathbb{V}\cap V_1'$.
\\
\color{black}

\noindent
\textsc{Step}~(i$_3$): \quad
We show that the set~$\mathbb{V}^c\cap V_1'$ is Euclidean dense in~$V_1'$.
\\
 The arguments used are similiar to those in the proof of~\cite[Proposition B.8]{IlchKirc21}. However~\cite[Proposition B.8]{IlchKirc21} deals with the unrestrained case; a more involved proof has to obey the reference set~$V_1'$. Let~$(E,J,Q,B)\in V_1'$ and~$\varepsilon>0$. By Lemma~\ref{lem:Hamiltonian_pairs_full_rank_pointwise}, there is some~$\widetilde{E},\widetilde{Q}\in\mathbb{K}^{\ell\times n}$ so that~$\rk\widetilde{E} = n$,~$\widetilde{E}^*\widetilde{Q} = \widetilde{Q}^*\widetilde{E}$ and
\begin{align*}
\max\set{\norm{E-\widetilde{E}}_{\ell,n},\norm{Q-\widetilde{Q}}_{\ell,n}}<\frac{\varepsilon}{2}.
\end{align*}
Consider the set
\begin{align*}
V_{(i)}(\widetilde{E}) := \set{(J,Q,B)\in\mathbb{K}^{\ell\times \ell}\times\mathbb{K}^{\ell\times n}\times\mathbb{K}^{\ell\times m}\,\big\vert\,J^* = -J, \widetilde{E}^*Q = Q^*\widetilde{E}}.
\end{align*}
Assume, for a moment, that we had already proven that the set
\begin{align*}
S_{(i)}(\widetilde{E}) := \set{(J,Q,B)\in V_{(i)}(\widetilde{E})\,\big\vert\,(\widetilde{E},J,Q,B)\in\mathbb{V}^c}
\end{align*} 
is relative generic in~$V_{(i)}(\widetilde{E})$. Since~$(J,\widetilde{Q},B)\in V_{(i)}(\widetilde{E})$, we find some~$(\widehat{J},\widehat{Q},\widehat{B})\in S_{(i)}(\widetilde{E})$ so that
\begin{align*}
\max\set{\norm{\widehat{J}-J}_{\ell,\ell},\norm{\widehat{Q}-\widetilde{Q}}_{\ell,n},\norm{\widehat{B}-B}_{\ell,m}}<\frac{\varepsilon}{2}.
\end{align*}
By definition of~$S_{(i)}(\widetilde{E})$ and~$V_{(i)}(\widetilde{E})$, we have~$(\widetilde{E},\widehat{J},\widehat{Q},\widehat{B})\in \mathbb{V}^c\cap V_1'$ so that
\begin{align*}
\max\set{\norm{E-\widetilde{E}}_{\ell,n},\norm{\widehat{J}-J}_{\ell,\ell},\norm{\widehat{Q}-Q}_{\ell,n},\norm{\widehat{B}-B}_{\ell,m}}<\varepsilon.
\end{align*}
This shows that~$\mathbb{V}^c\cap V_1'$ is indeed Euclidean dense in~$V_1'$. It remains to prove that~$S_{(i)}(\widetilde{E})$ is relative generic in~$V_{(i)}(\widetilde{E})$. The latter is convex and hence it remains, in view of Lemma~\ref{lem:convex_reference_sets}, to show that~$S_{(i)}(\widetilde{E})$ is non-empty. 
As in the proof of~\cite[Proposition B.8]{IlchKirc21}, we distinguish between the two cases~$\ell<n+m$ and~$\ell>n+m$; the case~$\ell = n+m$ was dealt with in
Step~(i$_1$).
\\

\noindent
\textsc{Step}~(i$_4$):\quad
 Let~$\ell<n+m$.
 \\
  Since~$\ell\geq n$, there is some~$T\in\mathbf{Gl}(\mathbb{K}^\ell)$ so that
\begin{align}\label{eq:block_decomposition_E}
T\widetilde{E} = \begin{bmatrix}
I_n\\ 0_{(\ell-n)\times n}
\end{bmatrix}.
\end{align}
Since~$T$ is regular we have, for all~$\lambda\in\mathbb{C}$ and~$(J,Q,B)\in V_{(i)}(\widetilde{E})$,
\begin{align*}
\rk[\lambda \widetilde{E}-JQ,B] = \rk T[\lambda \widetilde{E}-JQ,B] = \rk[\lambda T\widetilde{E}-TJT^*T^{-*}Q,TB].
\end{align*}
Moreover,~$Q\in\mathbb{K}^{\ell\times n}$ fullfills~$\widetilde{E}^*Q$ if, and only if,~$(T\widetilde{E})^*(T^{-*}Q) = (T^{-*}Q)^*(T\widetilde{E})$. In view of~\eqref{eq:block_decomposition_E}, the latter is the case if, and only if,~$T^{-*}Q$ allows a block-representation
\begin{align*}
T^{-*}Q = \begin{bmatrix}
Q^1\\
Q^2
\end{bmatrix}
\end{align*} 
for some~$Q^2\in\mathbb{K}^{(\ell-n)\times n}$ and some Hermitian~$Q^1\in\mathbb{K}^{n\times n}$.
Consider the skew-symmetric matrix
\begin{align*}
J_0 = T^{-1}\begin{bmatrix}
0 & -1\\
1 & 0 & -1\\
& \ddots & \ddots & \ddots\\
& & 1 & 0 & -1\\
& & & 1 & 0
\end{bmatrix}T^{-*}\in\mathbb{K}^{\ell\times \ell}
\end{align*}
and, for~$\beta\in(\mathbb{R}\setminus\set{0})^{n}$, the matrix
\begin{align*}
Q(\beta) := T^*\begin{bmatrix}
\beta_1\\
& \ddots\\
& & \beta_n\\
0 & \cdots & 0\\
\vdots & \ddots & \vdots\\
0 & \cdots & 0
\end{bmatrix}\in\mathbb{K}^{\ell\times n}.
\end{align*}
The matrices~$Q(\beta)$ fullfill, for all~$\beta\in\mathbb{R}^n$,~$\widetilde{E}^*Q(\beta) = Q(\beta)^*\widetilde{E}$.
Our choice of~$Q(\beta)$ and~$J_0$ guarantees that~$(J_0,Q(\beta))\in V_{(i)}(\widetilde{E})$ for all~$\beta\in\mathbb{R}^n$ and~$B\in\mathbb{K}^{\ell\times m}$. Furthermore,
\begin{align*}
J_0Q(\beta) = \begin{cases}
\begin{bmatrix}
0 & -\beta_2\\
\beta_1 & 0 & -\beta_3\\
& \ddots & \ddots & \ddots\\
& & \beta_{n-2} & 0 & -\beta_n\\
& & & \beta_{n-1} & 0
\end{bmatrix}, & \ell = n,\\
\left[\begin{array}{c}
\begin{array}{ccccc}
0 & -\beta_2\\
\beta_1 & 0 & -\beta_3\\
& \ddots & \ddots & \ddots\\
& & \beta_{n-2} & 0 & -\beta_n\\
& & & \beta_{n-1} & 0\\
& & & & \beta_n
\end{array}\\\hline
0_{(\ell-n-1)\times n}
\end{array}\right], & \ell>n.
\end{cases}
\end{align*}
Let~$\delta\in(\mathbb{R}\setminus\set{0})^{\ell-n}$ and~$\xi\in(\mathbb{R}\setminus\set{0})^{\ell-n+1}$, and put
\begin{align*}
B(\delta,\xi) := T^{-1}\left[\begin{array}{c|c}
0_{(n-1)\times (\ell-n+1)} & 0_{(n-1)\times (m-\ell+n-1)}\\\hline
\begin{array}{cccc}
\xi_1\\
\delta_1 & \xi_2\\
& \ddots & \ddots\\
& & \delta_{\ell-n} & \xi_{\ell-n+1}
\end{array} & 0_{(\ell-n)\times (m-\ell+n-1)}
\end{array}\right].
\end{align*}
Without loss of generality  we may assume that the~$\widetilde{M}_i$ are ordered in such a manner that
\begin{align*}
\widetilde{M}_1 = P(x)\mapsto \det [P(x)_{i,j}]_{i,j\in\mathbb{N}^*_{\leq \ell}},
\end{align*}
i.e.~$\sigma$ and~$\pi$ in Definition~\ref{def:minor} are the respective identity functions, and
\begin{align*}
\widetilde{M}_2 = P(x)\mapsto \det [P(x)_{i,j}]_{i,j-1\in\mathbb{N}^*_{\leq \ell}},
\end{align*}
i.e.~$\pi$ in Definition~\ref{def:minor} is the increment by one and~$\sigma$ the identity function. Then, our choice of~$J_0, Q(\beta)$ and~$B(\delta,\xi)$ yields
\begin{align*}
M_1^{T}(\widetilde{E},J_0,Q(\beta),B(\delta,\xi)) & = \widetilde{M}_1([xT\widetilde{E}-TJ_0T^{-*}TQ(\beta),B(\delta,\xi)])\\
& = x^n\prod_{j = 1}^{\ell-n}\delta_j  + \text{lower order terms}.
\end{align*}
Since~$\delta_j\neq 0$,~$j\in\mathbb{N}^*_{\leq\ell-n}$, we have~$\deg M_1^{T}(E,J_0,Q(\beta),B(\delta,\xi)) = n = \gamma_1^{T}$. Further, we have
\begin{align*}
M_2^{T}(\widetilde{E},J_0,Q(\beta),B(\delta,\xi)) = \prod_{i = 2}^n \beta_i\prod_{j = 1}^{\ell-n+1}\xi_j\neq 0.
\end{align*}
Thus, the equivalence~\eqref{eq:properties_p_polynomials} yields that~$p_{1,2}^{T}(\widetilde{E},J_0,Q(\beta),B(\delta,\xi))\neq 0$.
By definition of~$S_{(i)}(\widetilde{E})$ and~$\mathbb{V}$, we conclude~$(J_0,Q(\beta),B(\delta,\xi))\in S_{(i)}(\widetilde{E})$ and hence the latter set is non-empty. This proves the assertion for the case~$\ell<n+m$.
\\

\noindent
\textsc{Step}~(i$_5$):\quad 
Let~$\ell>n+m$. We show that~$S_{(i)}(\widetilde{E})$ is non-empty. 
\\
Analogously to Step~(i$_4$), 
we use the decomposition
\begin{align*}
T\widetilde{E} = \begin{bmatrix}
I_n\\
0_{(\ell-n)\times n}
\end{bmatrix}
\end{align*}
for some~$T\in\mathbf{Gl}(\mathbb{R}^\ell)$. Define, additionally to the matrices~$J_0$ and~$Q(\beta)$ defined in Step~(i$_4$), 
 the matrix
\color{black}
\begin{align*}
B_0' := T^{-1}\left[\begin{array}{c}
0_{n\times m}\\\hline
\begin{array}{cccc}
1\\
1 & 1\\
& \ddots & \ddots\\
& & 1 & 1\\
& & & 1
\end{array}\\\hline
0_{(\ell-n-m-1)\times m}
\end{array}\right].
\end{align*}
Analogously to the case~$\ell< n+m$ in 
 Step~(i$_4$),
  we can, without loss of generality,  assume that the minors~$\widetilde{M}_i$ are ordered in such a manner that
\begin{align*}
\widetilde{M}_1 = P(x)\mapsto\det[P(x)_{i,j}]_{i,j\in\mathbb{N}^*_{\leq n+m}}
\end{align*}
and
\begin{align*}
\widetilde{M}_2 = P(x)\mapsto\det[P(x)_{i,j}]_{i-1,j\in\mathbb{N}^*_{\leq n+m}}.
\end{align*}
Then, it is straightforward to verify
\begin{align*}
M_1^{T}(\widetilde{E},J_0',Q'(\mathbf{1}),B_0') = \det\underbrace{\begin{bmatrix}
x & 1 & & & & &\\
-1 & x & 1 & & &\\
& \ddots & \ddots & \ddots & & & \\
& & -1 & x & 1\\
& & & -1 & x
\end{bmatrix}}_{\in\mathbb{K}[x]^{n\times n}}.
\end{align*}
Since~$E$ has~$n$ columns, it is easy to see that the maximal degree of any minor of~$[xPE-PJQ,B]$ of order~$d$ does not exceed~$n$. Hence we have
\begin{align*}
\deg M_1^{P}(E,J_0,Q(\mathbf{1}),B_0') = n = \gamma_1^{T,S}.
\end{align*}
Further, we have
\begin{align*}
M_2^{P}(\widetilde{E},J_0,Q(\mathbf{1}),B_0') = (-1)^n\neq 0.
\end{align*}
This shows that~$p_{1,2}^{P}(\widetilde{E},J_0,Q(\mathbf{1}),B_0')\neq 0$ and therefore~$(J_0,Q(\mathbf{1}),B_0')\in S_{(i)}(\widetilde{E})$. This shows that the latter set is indeed non-empty. As discussed earlier, this shows that~$\mathbb{V}^c$ is indeed Euclidean dense in~$V_1'$.
\\

\noindent
(j)\quad 
The proof of (j) is analogous to the proof of (i): We can construct an algebraic set~$\widehat{\mathbb{V}}$ of the same type as in (i). It is just necessary to redefine the~$M_i^{P}$ as
\begin{align*}
M_i^{P} := (E,J,R,Q,B)\mapsto \widetilde{M}_i([xPET-P(J-R)P^*P^{-*}QT,B])
\end{align*}
to incorporate the~$R$. Then we show that for given~$E\in\mathbb{K}^{\ell\times n}$ with full rank the set of all matrices~$(J,R,Q,B)$ with~$(E,J,R,Q,B)\in\widehat{\mathbb{V}}^c$ has non-empty intersection with the convex set
\begin{align*}
\set{(J,R,Q,B)\,\big\vert\,(E,J,R,Q,B)\in\Sigma_{\ell,n,m}^{sdH}}.
\end{align*}
This can be directly concluded from the proof in
Step~(i$_3$)
since each matrix triple~$(J,Q,B)\in S(E)$ yields a feasible matrix quadruple~$(J,0,Q,B)$ for the proof of this step. This completes the proof of (a)--(j) for the case~$\ell\geq n$.
\\

\noindent
Case~$\ell < n$:  \ 
We prove (a)--(h).
\\
 In view of Lemma~\ref{lem:semidefinite_Hamiltonian_pair_pointwise_invertible}, the set
\begin{align*}
S^H := \set{(E,J,Q,B)\in \Sigma_{\ell,n,m}^H\,\big\vert\,\rk E = \ell}
\end{align*}
is relative generic in~$\Sigma_{\ell,n,m}^H$ and
\begin{align*}
S^{sdH} := \set{E,J,R,Q,B)\in\Sigma_{\ell,n,m}^{sdH}\,\big\vert\,\rk E = \ell}
\end{align*}
is relative generic in~$\Sigma_{\ell,n,m}^{sdH}$. Since~$\ell = \min\set{\ell,n+m} = \min\set{\ell,2n+m}$, the inclusions
\begin{align*}
S^H\subseteq S_{(a)}\cap S_{(b)}\cap S_{(c)}\cap S_{(d)}~\text{and}~S^{sdH}\subseteq S_{(e)}\cap S_{(f)}\cap S_{(g)}\cap S_{(h)}
\end{align*}
hold, where the inclusion~$S_{(a)}\subseteq S_{(c)}$ can be verified analogously to (c) in the case~$\ell\geq n$. Hence, Proposition~\ref{prop:properties_rel_gen}\,(b) yields that~$S_{(a)}$,~$S_{(b)}$,~$S_{(c)}$ and~$S_{(d)}$ are relative generic in~$\Sigma_{\ell,n,m}^H$, and~$S_{(e)}$,~$S_{(f)}$,~$S_{(g)}$ and~$S_{(h)}$ are relative generic in~$\Sigma_{\ell,n,m}^{sdH}$. 
\\
 
\noindent
(i)\quad    
Analogously to the case~$\ell\geq n$, we construct the polynomials~$p_{i,j}$ associated to all minors~$\widetilde{M}_1,\ldots,\widetilde{M}_q$,~$q\in\mathbb{N}^*$, of order~$\ell$ w.r.t.~$\mathbb{R}[x]^{\ell\times (n+m)}$ and their induced functions
\begin{align*}
M_i^{P,T}:\mathbb{K}^{\ell\times n}\times\mathbb{K}^{\ell\times \ell}\times\mathbb{K}^{\ell\times n}\times\mathbb{K}^{\ell\times m}\to\mathbb{K}[x],\quad (E,J,Q,B)\mapsto \widetilde{M}_i([xPET-PJP^*P^{-*}QT,B]).
\end{align*}
We omit the details of this construction. The ideal~$I$ generated by the polynomials~$p_{i,j}^{P,T}$ defined as in~\eqref{eq:Sylvester_operator} generates the algebraic set
\begin{align*}
\mathbb{V} := \set{(E,J,Q,B)\,\big\vert\forall p\in I: p(E,J,Q,B) = 0}
\end{align*}
which fullfills~$\mathbb{V}^c\cap \Sigma_{\ell,n,m}^H\subseteq S_{(i)}\cap \Sigma_{\ell,n,m}^H$. By Definition~\ref{def:rel_gen}, it remains to prove that~$\mathbb{V}^c\cap\Sigma_{\ell,n,m}^H$ is Euclidean dense in~$\Sigma_{\ell,n,m}^H$. Let~$(E,J,Q,B)\in\Sigma_{\ell,n,m}^H$. As discussed in detail in the proof of (i) in the case~$\ell\geq n$, it suffices to show that
\begin{align*}
S(E) := \set{(J,Q,B)\in\mathbb{K}^{\ell\times \ell}\times\mathbb{K}^{\ell\times n}\times\mathbb{K}^{\ell\times m}\,\big\vert\,(E,J,Q,B)\in\mathbb{V}^c}
\end{align*}
is relative generic in
\begin{align*}
V(E) := \set{(J,Q,B)\in\mathbb{K}^{\ell\times \ell}\times\mathbb{K}^{\ell\times n}\times\mathbb{K}^{\ell\times m}\,\big\vert\,J^* = -J, Q^*E = E^*Q\geq 0}.
\end{align*}
Furthermore, an application of Lemma~\ref{lem:semidefinite_Hamiltonian_pair_pointwise_invertible} analogously to Lemma~\ref{lem:Hamiltonian_pairs_full_rank_pointwise}\,(ii) in the case~$\ell\geq n$ yields that it suffices to consider the case~$\rk E = \ell$. Since~$\mathbb{V}$ is an algebraic set,~$S(E)$ is Zariski-open. Since~$V(E)$ is a non-empty convex set, it suffices therefore in view of Lemma~\ref{lem:convex_reference_sets} to prove that~$S(E)\cap V(E) \neq\emptyset$. There are~$P\in\mathbf{Gl}(\mathbb{K}^\ell)$ and~$T\in\mathbf{Gl}(\mathbb{K}^n)$ so that
\begin{align*}
PET = [I_\ell, 0_{\ell\times (n-\ell)}]
\end{align*}
Choose
\begin{align*}
Q_1 := P^*[I_\ell, 0_{\ell\times (n-\ell)}]T^{-1}.
\end{align*}
Since~$T$ is regular, we conclude from
\begin{align*}
T^*E^*QT = T^*E^*P^*P^{-*}QT = \begin{bmatrix}
I_\ell & 0_{\ell\times (n-\ell)}\\
0_{(n-\ell)\times \ell} & 0_{(n-\ell)\times (n-\ell)}
\end{bmatrix}
\end{align*}
that~$E^*Q_1$ is Hermitian and positive semi-definite.
Choose further the skew-symmetric matrix
\begin{align*}
J_1 = P^{-1}\begin{bmatrix}
0 & -1\\
1 & 0 & -1\\
& \ddots & \ddots & \ddots\\
& & 1 & 0 & -1\\
& & & 1 & 0
\end{bmatrix}P^{-*}\in\mathbb{K}^{\ell\times \ell}
\end{align*}
and with~$e_\ell = [0,\ldots,0,1]^\top$,
\begin{align*}
\widehat{B} := P^{-1}[e_\ell,0_{\ell\times (m-1)}]\in\mathbb{K}^{\ell\times m}.
\end{align*}
With these particular matrices, we get
\begin{align*}
P\big[xE-J_1Q_1,\widehat{B}]\begin{bmatrix}
T\\& I_m
\end{bmatrix} = \left[\begin{array}{c|c|c|c}
\begin{array}{ccccc}
x & 1\\
-1 & x & 1\\
& \ddots & \ddots & \ddots\\
& & -1 & x & 1\\
& & & -1 & x
\end{array} & \, 0_{\ell\times (n-\ell)}\, & \begin{array}{c}
0\\
0\\
\vdots\\
0\\
1
\end{array} &\, 0_{\ell\times(m-1)}
\end{array}\right].
\end{align*}
Without loss of generality we can assume that the~$\widetilde{M}_i$ are ordered in such a manner that
\begin{align*}
\widetilde{M}_1 = P(x)\mapsto\det[P(x)_{i,j}]_{i,j\in\mathbb{N}^*_{\leq \ell}}
\end{align*}
and
\begin{align*}
\widetilde{M}_2 = P(x)\mapsto\det[P(x)_{i,j}]_{i\in\mathbb{N}^*_{\leq \ell},j\in\set{2,\ldots,\ell,n+1}}.
\end{align*}
Then, it is easy to verify that~$M_1^{P,T}(E,J_1,Q_1,\widehat{B})$ and~$M_2^{P,T}(E,J_1,Q_1,\widehat{B})$ are coprime and
\begin{align*}
\deg M_1^{P,T}(E,J_1,Q_1,\widehat{B}) = \gamma_1^{P,T}.
\end{align*}
We can, analogously to the case~$\ell \geq n$, conclude from~\eqref{eq:properties_p_polynomials} that~$p_{1,2}^{P,T}(E,J_1,Q_1,\widehat{B})\neq 0$. Thus, we have~$(J_1,Q_1,\widehat{B})\in S(E)\cap V(E)$ and therefore the latter is non-empty. We conclude that~$S(E)$ is relative generic in~$V(E)$. Therefore, we conclude that~$\mathbb{V}^c$ is indeed Euclidean dense in~$\Sigma_{\ell,n,m}^H$.
\\
 
\noindent
(j)\quad 
As in the case~$\ell\geq n$, we can modify the proof of (i) to incorporate the additional matrix~$R$ and readily conclude that~$S_{(j)}$ is relative generic in~$\Sigma_{\ell,n,m}^{sdH}$. This completes the proof of the proposition.
\end{proof}

%_______________________________________________________________________
\section{Relative genericity of controllability}\label{Sec:Main_results-contr}
%________________________________________________________________________
We are now in the position to derive necessary and sufficient conditions on the dimensions~$\ell,n,m$ 
of  linear finite-dimensional port-Hamiltonian descriptor systems described by~\eqref{eq:pH_DAE}
so that they are relative generically controllable 
(with respect to the five concepts defined in Proposition~\ref{Def-Prop:Contr})
in the reference sets~$\Sigma_{\ell,n,m}^{H}$, ~$\Sigma_{\ell,n,m}^{sdH}$
and~$\Sigma_{\ell,n,m}^{dH}$.

\begin{Theorem}\label{thm:main_corollary}
Consider, for $\ell,n,m\in\mathbb{N}^*$,
the  port-Hamiltonian descriptor systems described by~\eqref{eq:pH_DAE},
subsets of controllable systems~$S_{\text{controllable}}^H$
defined in~\eqref{eq:Hcontr}, 
and reference sets~$\Sigma_{\ell,n,m}^{H}$, ~$\Sigma_{\ell,n,m}^{sdH}$
and~$\Sigma_{\ell,n,m}^{dH}$
defined in~\eqref{eq:Sigma-sdH}--\eqref{eq:Sigma-H}.
\color{black}
Then the following equivalences hold:
\begin{center}
\begin{tabular}{clccl}
(i) &~$S_{\text{freely initializable}}^H$ & is rel. gen. in~$\Sigma_{\ell,n,m}^{H}$ &~$\iff$ &~$\ell\leq n+m$,\\[2ex]
(ii) &~$S_{\text{impulse controllable}}^H$ & is rel. gen. in~$\Sigma_{\ell,n,m}^{H}$ &~$\iff$ &~$\ell\leq n+m$,\\[2ex]
(iii) &~$S_{\text{behavioural controllable}}^H$ & is rel. gen. in~$\Sigma_{\ell,n,m}^{H}$ &~$\iff$ &~$\ell\neq n+m$,\\[2ex]
(iv) &~$S_{\text{completely controllable}}^H$ & is rel. gen. in~$\Sigma_{\ell,n,m}^{H}$ &~$\iff$ &~$\ell<n+m$,\\[2ex]
(v) &~$S_{\text{strongly controllable}}^H$ & is rel. gen. in~$\Sigma_{\ell,n,m}^{H}$ &~$\iff$ &~$\ell <n+m$.\\
\end{tabular}
\end{center}
Moreover, if~$S_{\text{\textit{controllable}}}^H$ is not relative generic in~$\Sigma_{\ell,n,m}^H$, then~$(S_{\text{\textit{controllable}}}^H)^c$ is relative generic in~$\Sigma_{\ell,n,m}^H$, where \textit{controllable} stands for either one of the studied controllability notions. 

The above remains true, if we consider~$S_{\text{\textit{controllable}}}^{sdH}$ with reference set~$\Sigma_{\ell,n,m}^{sdH}$ or~$S_{\text{\textit{controllable}}}^{dH}$ with reference set~$\Sigma_{\ell,n,m}^{dH}$.
\end{Theorem}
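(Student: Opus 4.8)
The plan is to translate each of the five algebraic controllability characterizations from Proposition~\ref{Def-Prop:Contr} into an equality of ranks, and to exploit that on the relative generic sets of Proposition~\ref{lem:main_result_1} every such rank attains its maximal possible value~$\min\set{\ell,\cdot}$. Each equality then reduces to a comparison of two such maxima, which is a purely dimensional condition. Throughout I would invoke Proposition~\ref{prop:properties_rel_gen}: that supersets of relative generic sets are relative generic~(b), that finite intersections of relative generic sets are relative generic~(c), and that a set and its complement cannot both be relative generic~(f).

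First, for free initializability the condition is~$\rk[E,B]=\rk[E,JQ,B]$. If~$\ell\le n+m$, then on~$S_{(a)}$ one has~$\rk[E,B]=\ell$, and since~$[E,B]$ is a column-submatrix of~$[E,JQ,B]$, which has only~$\ell$ rows, also~$\rk[E,JQ,B]=\ell$; thus~$S_{(a)}\subseteq S^H_{\text{freely initializable}}$ and~(b) gives relative genericity. If~$\ell>n+m$, then on~$S_{(a)}\cap S_{(b)}$ one has~$\rk[E,B]=n+m<\min\set{\ell,2n+m}=\rk[E,JQ,B]$, so~$S_{(a)}\cap S_{(b)}\subseteq (S^H_{\text{freely initializable}})^c$; by~(c) the complement is relative generic, whence by~(f) the set itself is not. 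The impulse controllable case is handled identically: the characterization~$\rk[E,JQ,B]=\rk[E,JQZ,B]$ again gives~$S_{(a)}\subseteq S^H_{\text{impulse controllable}}$ when~$\ell\le n+m$; when~$\ell>n+m$ one has~$\ell>n$, so intersecting with the relative generic stratum~$\set{\rk E=n}$ (Lemma~\ref{lem:Hamiltonian_pairs_full_rank_pointwise}\,(ii)) forces~$Z$ to be empty, hence~$[E,JQZ,B]=[E,B]$, and the same rank gap as above places the generic set in the complement.

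Next, behavioural controllability reads~$\rk_{\mathbb{K}(x)}[xE-JQ,B]=\rk_{\mathbb{C}}[\lambda E-JQ,B]$ for all~$\lambda\in\mathbb{C}$. On~$S_{(c)}$ the left rank equals~$\min\set{\ell,n+m}$ unconditionally, while~$S_{(i)}$ forces the right rank to equal~$\min\set{\ell,n+m}$ for every~$\lambda$. Hence~$S_{(c)}\cap S_{(i)}\subseteq S^H_{\text{behavioural controllable}}$, and since~$S_{(i)}$ is relative generic exactly when~$\ell\neq n+m$, so is the behavioural controllable set by~(c) and~(b). When~$\ell=n+m$, Proposition~\ref{lem:main_result_1} gives that~$S_{(i)}^c$ is relative generic; on~$S_{(c)}\cap S_{(i)}^c$ the~$\mathbb{K}(x)$-rank is~$\ell$ while some~$\lambda$ yields a strictly smaller complex rank, so this relative generic set lies in the complement and~(f) finishes the case.

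Finally I would use the identities~$S_{\text{completely controllable}}=S_{\text{freely initializable}}\cap S_{\text{behavioural controllable}}$ and~$S_{\text{strongly controllable}}=S_{\text{impulse controllable}}\cap S_{\text{behavioural controllable}}$ from Proposition~\ref{Def-Prop:Contr}. By~(c) the complete (strong) set is relative generic whenever both factors are, i.e.\ for~$\ell\le n+m$ and~$\ell\neq n+m$ simultaneously, that is~$\ell<n+m$; conversely, since each factor is a superset of the intersection, relative genericity of the intersection would force it on each factor by~(b), so the threshold is sharp. When~$\ell\ge n+m$ the relative generic complement of the failing factor is contained in the complement of the intersection, making the latter relative generic. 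The statements for~$\Sigma^{sdH}_{\ell,n,m}$ and~$\Sigma^{dH}_{\ell,n,m}$ follow verbatim after replacing~$JQ$ by~$(J-R)Q$ and the sets~$S_{(a)}$--$S_{(d)},S_{(i)}$ by their dissipative counterparts~$S_{(e)}$--$S_{(h)},S_{(j)}$, which Proposition~\ref{lem:main_result_1} provides as relative generic in those reference sets. The main obstacle is the bookkeeping in the impulse and strongly controllable cases, where the universal quantifier over~$Z$ with~$\im Z=\ker E$ must be controlled by first passing to the relative generic full-rank stratum~$\set{\rk E=\min\set{\ell,n}}$, so that~$\ker E$, and hence the admissible~$Z$, is pinned down.
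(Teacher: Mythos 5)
Your proposal follows the paper's own strategy almost step for step: combine the relative generic sets $S_{(a)}$--$S_{(j)}$ of Proposition~\ref{lem:main_result_1} with the algebraic characterizations of Proposition~\ref{Def-Prop:Contr} and the closure properties of Proposition~\ref{prop:properties_rel_gen}\,(b),(c),(f). Items (i), (iii), (iv), (v) and the ``moreover'' clause are argued exactly as in the paper (for the positive direction of (i) you use $S_{(a)}$ alone via the submatrix-rank observation, where the paper uses $S_{(a)}\cap S_{(b)}$; both are fine).

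The one place you genuinely deviate is impulse controllability for $\ell>n+m$, and there your justification has a gap. You intersect with the stratum $\set{\rk E=n}$ and cite Lemma~\ref{lem:Hamiltonian_pairs_full_rank_pointwise}\,(ii), but that lemma asserts relative genericity with respect to the reference set $\set{(E,Q)\,\vert\,E^*Q=Q^*E}$, \emph{without} the semidefiniteness constraint, whereas your reference set is $\Sigma^{H}_{\ell,n,m}$, which carries $E^*Q=Q^*E\geq 0$. Relative genericity does not pass from a reference set to an arbitrary subset; by Lemma~\ref{lem:rel_open_reference_set} it passes only to relatively \emph{open} subsets, and $\Sigma^{H}_{\ell,n,m}$ sits relatively closed inside the larger set. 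So the citation does not establish that the full-rank stratum is relative generic in $\Sigma^{H}_{\ell,n,m}$. The claim itself is true and repairable with tools from the paper: since $\ell>n+m\geq n$, Lemma~\ref{lem:HUGE_simplification} shows that $\set{E^*Q=Q^*E>0}$ is relative generic in the $(E,Q)$-component of $\Sigma^{H}_{\ell,n,m}$, positive definiteness of $E^*Q\in\mathbb{K}^{n\times n}$ forces $\rk E=n$, and Proposition~\ref{prop:properties_rel_gen}\,(d),(b) then give the stratum's relative genericity; alternatively, avoid the stratum entirely and use $S_{(b)}\cap S_{(d)}$ as the paper does, since $S_{(d)}$ was constructed precisely to control the quantifier over $Z$ without pinning down $\ker E$. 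A second, smaller imprecision: for $\Sigma^{dH}_{\ell,n,m}$ the replacement is not ``verbatim'', because Proposition~\ref{lem:main_result_1} states relative genericity of $S_{(j)}$ (or its complement) only in $\Sigma^{sdH}_{\ell,n,m}$, not in $\Sigma^{dH}_{\ell,n,m}$; the paper closes this by noting that $\Sigma^{dH}_{\ell,n,m}$ is a relative generic subset of $\Sigma^{sdH}_{\ell,n,m}$ (Lemma~\ref{lem:definite_is_generic} with Proposition~\ref{prop:properties_rel_gen}\,(d)) and then invoking Proposition~\ref{prop:properties_rel_gen}\,(e).
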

\begin{proof}
In this proof, we will use the notions introduced in Proposition~\ref{lem:main_result_1}.

\noindent
(i) \quad By Proposition~\ref{lem:main_result_1} and Proposition~\ref{prop:properties_rel_gen}\,(c), the set~$S_{(a)}\cap S_{(b)}$ is relative generic in~$\Sigma_{\ell,n,m}^H$. In view of the algebraic characterization of impulse controllability in Proposition~\ref{Def-Prop:Contr}, we have the inclusions
\begin{align*}
S_{(a)}\cap S_{(b)}\subseteq\begin{cases}
S_{\textit{freely~initializable}}^H, & \ell\leq n+m,\\
(S_{\textit{freely~initializable}}^H)^c, & \ell>n+m.
\end{cases} 
\end{align*}
By Proposition~\ref{prop:properties_rel_gen}\,(b), we conclude that~$S_{\textit{freely~initializable}}^H$ is relative generic in~$\Sigma_{\ell,n,m}^H$ if, and only if,~$\ell\leq n+m$ and that~$(S_{\textit{freely~initializable}}^H)^c$ is relative generic in~$\Sigma_{\ell,n,m}^H$ if, and only if,~$\ell>n+m$.
\\

\noindent
(ii)\quad By Proposition~\ref{lem:main_result_1} and Proposition~\ref{prop:properties_rel_gen}\,(c), the set~$S_{(b)}\cap S_{(d)}$ is relative generic in~$\Sigma_{\ell,n,m}^H$. Proposition~\ref{Def-Prop:Contr} yields the inclusions
\begin{align*}
S_{(b)}\cap S_{(d)}\subseteq\begin{cases}
S_{\textit{impulse~controllable}}^H, & \ell\leq n+m,\\
(S_{\textit{impulse~controllable}}^H)^c, & \ell>n+m.
\end{cases} 
\end{align*}
Thus, Proposition~\ref{prop:properties_rel_gen}\,(b) yields the assertion for~$S_{\textit{impulse~controllable}}^H$.
\\

\noindent
(iii)\quad In view of Proposition~\ref{lem:main_result_1} and Proposition~\ref{prop:properties_rel_gen}\,(c), the set~$S_{(c)}\cap S_{(i)}$ is relative generic in~$\Sigma_{\ell,n,m}^H$ if, and only if,~$\ell\neq n+m$; otherwise the set~$S_{(c)}\cap S_{(i)}^c$ is relative generic in~$\Sigma_{\ell,n,m}^H$. By Proposition~\ref{Def-Prop:Contr}, the inclusion
\begin{align*}
S_{(c)}\cap S_{(i)}\subseteq S_{\textit{behavioural~controllable}}^H
\end{align*}
holds true. Thus, Proposition~\ref{prop:properties_rel_gen}\,(b) yields that~$S_{\textit{behavioural~controllable}}^H$ is relative generic in~$\Sigma_{\ell,n,m}^H$ if, and only if,~$\ell\neq n+m$; otherwise its complement is relative generic in~$\Sigma_{\ell,n,m}^H$.
\\

\noindent
(iv)\quad In view of Proposition~\ref{Def-Prop:Contr}, the identity
\begin{align*}
S_{\textit{completely~controllable}}^H = S_{\textit{freely~initializable}}^H\cap S_{\textit{behavioural~controllable}}^H
\end{align*}
holds true. By Proposition~\ref{prop:properties_rel_gen}\,(c), the assertion follows from (i) and (iii) of the present proposition.
\\

\noindent
(v)\quad Proposition~\ref{Def-Prop:Contr} yields that
\begin{align*}
S_{\textit{strongly~controllable}}^H = S_{\textit{impulse~controllable}}^H\cap S_{\textit{behavioural~controllable}}^H.
\end{align*}
Hence, Proposition~\ref{prop:properties_rel_gen}\,(c), and (ii) and (iii) of the present proposition imply the proposed equivalence for relative genericity of~$S_{\textit{strongly~controllable}}$.

The assertion for the reference set~$\Sigma_{\ell,n,m}^{sdH}$ can be analogously proven by replacing~$S_{(a)}, S_{(b)}, S_{(c)}, S_{(d)}$ and~$S_{(i)}$ by~$S_{(e)}, S_{(f)}, S_{(g)}, S_{(h)}$ and~$S_{(j)}$, respectively.

Finally, in view of Lemma~\ref{lem:definite_is_generic} and Proposition~\ref{prop:properties_rel_gen}\,(d),~$\Sigma_{\ell,n,m}^{dH}$ is a relative generic subset of~$\Sigma_{\ell,n,m}^{sdH}$. Therefore, Proposition~\ref{prop:properties_rel_gen}\,(e) yields the assertion for the reference set~$\Sigma_{\ell,n,m}^{dH}$. This shows the proposition.
\color{black}
\end{proof}

We finally observe that generic controllability holds for
port-Hamiltonian descriptor systems if, and only if, 
relative generic controllability  holds for unconstrained differential-algebraic systems.
This is made precise in the following corollary.

\begin{Corollary} \label{Cor:contr-DAE-pH}
Let $r,\ell,n,m\in\mathbb{N}^*$. 
Then the port-Hamiltonian descriptor systems~\eqref{eq:pH_DAE} 
are generically controllable if, and only if, the  unconstrained  descriptor systems~\eqref{eq:the_DAE} 
are relative generically controllable in~$\Sigma_{\ell,n,m}^{H}$, 
where controllable stands for either of
the five controllability concepts defined in Proposition~\ref{Def-Prop:Contr}.
\end{Corollary}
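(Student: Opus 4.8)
The plan is to collapse both sides of the claimed biconditional onto one and the same inequality in the dimensions $\ell,n,m$, turning the corollary into a transitivity statement $[A\iff C]\wedge[B\iff C]\Rightarrow[A\iff B]$. First I would fix one of the five controllability concepts and quote Theorem~\ref{thm:main_corollary} for the port-Hamiltonian side: it asserts that $S_{\text{\textit{controllable}}}^H$ is relative generic in $\Sigma_{\ell,n,m}^{H}$ precisely when the associated inequality holds, namely $\ell\leq n+m$ for freely initializable and for impulse controllable, $\ell\neq n+m$ for behavioural controllable, and $\ell<n+m$ for completely and for strongly controllable. This disposes of the left-hand condition $A$ for every concept at once.

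For the right-hand side I would invoke the characterization of relative generic controllability of the unconstrained systems~\eqref{eq:the_DAE} recorded in~\eqref{eq:list-contr}, due to~\cite{IlchKirc21}, together with its relative-generic refinement in~\cite{IlchKirc22}. The decisive point, which I would state explicitly as a side-by-side comparison, is that this produces exactly the same five inequalities for the matching five concepts. Since each side has now been shown equivalent to the identical condition $C$, the equivalence $A\iff B$ asserted by the corollary follows immediately, with no further analytic content required.

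The step I expect to carry the real weight is reconciling the two reference sets so that ``the same inequality'' is literally the same expression and not merely a visual coincidence. The port-Hamiltonian systems are quadruples $(E,J,Q,B)\in\Sigma_{\ell,n,m}^{H}$ subject to $J=-J^*$ and $E^*Q=Q^*E\geq0$, whereas~\eqref{eq:the_DAE} lives among triples $(E,A,B)\in\Sigma_{\ell,n,m}$; I would bridge them through the substitution $A=JQ$ underlying the definition~\eqref{eq:Hcontr}, and verify that the data feeding each characterization --- the triple $\ell,n,m$ together with the admissible rank $r$ of $E$ that governs the relative-generic statement taken from~\cite{IlchKirc22} --- agree on both sides. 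Once this bookkeeping confirms that the port-Hamiltonian and the unconstrained characterizations are driven by one and the same dimension datum, the corollary is a purely logical consequence of Theorem~\ref{thm:main_corollary} and the cited earlier results.
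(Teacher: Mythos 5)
Your proposal is correct and takes essentially the same route as the paper, whose entire proof is that the corollary ``is a direct consequence of the conjunction of \cite[Thm.~2.3]{IlchKirc21} and Theorem~\ref{thm:main_corollary}''~--- i.e., both sides are characterized by the same five dimension inequalities and the equivalence follows by transitivity, exactly your $[A\iff C]\wedge[B\iff C]\Rightarrow[A\iff B]$ scheme. One caveat: the bookkeeping in your last paragraph is unnecessary and slightly off~--- the $r$ in the hypothesis is vestigial (it never enters the statement), the unconstrained side needs only the plain genericity characterization~\eqref{eq:list-contr} of~\cite{IlchKirc21} (relative genericity in the full space $\Sigma_{\ell,n,m}$ coincides with genericity by Proposition~\ref{prop:properties_rel_gen}\,(g)), and actually importing the $\rk E\leq r$ reference sets of~\cite{IlchKirc22} would yield $r$-dependent inequalities that do \emph{not} literally coincide with the five above.
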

\begin{proof}
The corollary is a direct consequence of the conjunction of 
\cite[Thm.~2.3]{IlchKirc21} and Theorem~\ref{thm:main_corollary}.
\end{proof}

%_______________________________________________________________________
\section{Relative genericity of stabilizability}\label{Sec:Main_results-stab}
%________________________________________________________________________
In this section we study~-- similar to relative generic  controllability in Section~\ref{Sec:Main_results-contr}~--
relative generic stabilizability.
Recall that there are three different concepts of stabilizability~--
 see Proposition~\ref{Prop:Stab}.
We   derive necessary and sufficient conditions on the dimensions~$\ell,n,m$ 
of  linear finite-dimensional port-Hamiltonian descriptor systems described by~\eqref{eq:pH_DAE}
so that they are relative generically stabilizable 
in the reference sets~$\Sigma_{\ell,n,m}^{H}$, ~$\Sigma_{\ell,n,m}^{sdH}$, 
and~$\Sigma_{\ell,n,m}^{dH}$.

\begin{Theorem}\label{thm:stab}
Let~$\ell,n,m\in\mathbb{N}^*$. 
Then   the following equivalences hold:
\begin{center}
\begin{tabular}{clccl}
(i) &~$S_{\text{behavioural stabilizable}}^H$ & is rel. gen. in~$\Sigma_{\ell,n,m}^{H}$ &~$\iff$ &~$\ell\neq n+m$,\\[2ex]
(ii) &~$S_{\text{completely stabilizable}}^H$ & is rel. gen. in~$\Sigma_{\ell,n,m}^{H}$ &~$\iff$ &~$\ell<n+m$,\\[2ex]
(iii) &~$S_{\text{strongly stabilizable}}^H$ & is rel. gen. in~$\Sigma_{\ell,n,m}^{H}$ &~$\iff$ &~$\ell <n+m$.\\
\end{tabular}
\end{center}
The above remains true, if we consider~$S_{\text{\textit{stabilizable}}}^{sdH}$ with reference set~$\Sigma_{\ell,n,m}^{sdH}$. Moreover, if~$\ell>n+m$, then~$(S_{\text{\textit{completely stabilizable}}}^*)^c$ and~$(S_{\text{\textit{strongly stabilizable}}}^*)^c$ are relative generic in~$\Sigma_{\ell,n,m}^*$, where~$*$ stands for either~$H$ or~$dH$.
\end{Theorem}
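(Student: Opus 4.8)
The plan is to reduce the whole statement to the controllability results of Theorem~\ref{thm:main_corollary} wherever possible and to isolate a single genuinely new argument at the critical dimension $\ell=n+m$. The starting point is that the algebraic characterisations in Proposition~\ref{Prop:Stab} differ from their controllability counterparts in Proposition~\ref{Def-Prop:Contr} only by replacing the quantifier ``$\forall\lambda\in\mathbb{C}$'' by ``$\forall\lambda\in\overline{\mathbb{C}}_+$''. As $\overline{\mathbb{C}}_+\subseteq\mathbb{C}$, this immediately yields the inclusions
\begin{align*}
S_{\textit{behavioural controllable}}^H\subseteq S_{\textit{behavioural stabilizable}}^H,\quad S_{\textit{completely controllable}}^H\subseteq S_{\textit{completely stabilizable}}^H,\quad S_{\textit{strongly controllable}}^H\subseteq S_{\textit{strongly stabilizable}}^H,
\end{align*}
while reading off the rank identities gives $S_{\textit{completely stabilizable}}^H\subseteq S_{\textit{freely initializable}}^H$, $S_{\textit{strongly stabilizable}}^H\subseteq S_{\textit{impulse controllable}}^H$, and (comparing generic and pointwise ranks) $S_{\textit{completely stabilizable}}^H,\,S_{\textit{strongly stabilizable}}^H\subseteq S_{\textit{behavioural stabilizable}}^H$. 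All inclusions hold verbatim for the $sdH$-classes.

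The forward directions follow at once: if $\ell\neq n+m$ (resp.\ $\ell<n+m$), then by Theorem~\ref{thm:main_corollary} the controllable set on the left is relative generic, so by Proposition~\ref{prop:properties_rel_gen}\,(b) its stabilizable superset is relative generic as well. The converse for $\ell>n+m$, including the final ``moreover'' claim, is inherited in the same way: from $S_{\textit{completely stabilizable}}^H\subseteq S_{\textit{freely initializable}}^H$ and $S_{\textit{strongly stabilizable}}^H\subseteq S_{\textit{impulse controllable}}^H$ the complements satisfy the reverse inclusions, and the ``moreover'' part of Theorem~\ref{thm:main_corollary} supplies that $(S_{\textit{freely initializable}}^H)^c$ and $(S_{\textit{impulse controllable}}^H)^c$ are relative generic for $\ell>n+m$; Proposition~\ref{prop:properties_rel_gen}\,(b) then promotes the larger complements. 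For the $dH$-reference set I would use that $\Sigma_{\ell,n,m}^{dH}$ is relative generic in $\Sigma_{\ell,n,m}^{sdH}$ (Lemma~\ref{lem:definite_is_generic} with Proposition~\ref{prop:properties_rel_gen}\,(d)) and transport every statement by Proposition~\ref{prop:properties_rel_gen}\,(e).

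The essentially new work is the converse at $\ell=n+m$, where $[\,xE-JQ,\,B\,]$ is square and $p(\lambda):=\det[\lambda E-JQ,B]$ has degree $\le n$. Here the full-plane set $S_{(i)}$ of Proposition~\ref{lem:main_result_1} is useless, since a generic root of $p$ may sit in the open left half-plane and be harmless for stabilizability; in fact both $S_{\textit{behavioural stabilizable}}^H$ and its complement have non-empty Euclidean interior, so neither is relative generic. It therefore suffices to show $S_{\textit{behavioural stabilizable}}^H$ is \emph{not} relative generic, which by Proposition~\ref{prop:properties_rel_gen}\,(a) (contrapositive) amounts to exhibiting a non-empty relatively open subset of $\Sigma_{\ell,n,m}^H$ inside its complement. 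I would write one explicit tuple with $E=\bigl[\begin{smallmatrix}I_n\\0\end{smallmatrix}\bigr]$ and $Q=E$ (so $E^*Q=I_n>0$ and the port-Hamiltonian constraints hold), skew $J$ with diagonal blocks zero and rank-one off-diagonal block $J_{12}$, and $B=\bigl[\begin{smallmatrix}B_1\\I_m\end{smallmatrix}\bigr]$ with rank-one $B_1$ chosen so that $B_1J_{12}^*=e_1e_1^\top$. A Schur-complement computation then gives $p(\lambda)=\det(\lambda I_n-B_1J_{12}^*)=\lambda^{n-1}(\lambda-1)$, so $\deg p=n$, the leading coefficient $\det[E,B]=\det I_m\neq 0$, and $\lambda=1$ is a \emph{simple} root in the open right half-plane.

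The robustness step is the crux and the main obstacle. Because the coefficients of $p$ are polynomials in the entries of $(E,J,Q,B)$, the leading coefficient $\det[E,B]$ stays non-zero and, by continuity of an isolated simple root (implicit function theorem), the root near $\lambda=1$ stays in the open right half-plane under small perturbations; since $\det[E,B]\neq 0$ forces $\rk_{\mathbb{K}(x)}[xE-JQ,B]=\ell$, each such root witnesses a genuine rank drop in $\overline{\mathbb{C}}_+$ and hence failure of behavioural stabilizability. As $E^*Q=I_n>0$ at the constructed point, the sign constraint defining $\Sigma_{\ell,n,m}^H$ is inactive nearby, so the perturbations may be carried out within the smooth constraint set $\{E^*Q=Q^*E,\ J=-J^*\}$ using the parametrisation of feasible perturbations from Lemma~\ref{lem:How_do_feasible_perturbations_look_like}. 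This produces the required relatively open neighbourhood inside $(S_{\textit{behavioural stabilizable}}^H)^c$, proving (i) at $\ell=n+m$; since $S_{\textit{completely stabilizable}}^H,\,S_{\textit{strongly stabilizable}}^H\subseteq S_{\textit{behavioural stabilizable}}^H$, the same neighbourhood lies in their complements, giving (ii) and (iii) at $\ell=n+m$. The $sdH$-case is identical upon setting $R=0$ (admissible as $R\ge 0$) and perturbing $R$ freely among Hermitian matrices, and the $dH$-case follows from the $sdH$-case via Proposition~\ref{prop:properties_rel_gen}\,(e).
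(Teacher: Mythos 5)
Your proposal is correct, and at the crux~--- the case~$\ell=n+m$~--- it is the same strategy as the paper's: exhibit an explicit port-Hamiltonian tuple whose pencil determinant equals~$\lambda^{n-1}(\lambda-1)$, so that the rank drops at~$\lambda=1\in\mathrm{int}\,\overline{\mathbb{C}}_+$, then argue that this failure is robust, so the complement of the stabilizable set has non-empty relative interior and Proposition~\ref{prop:properties_rel_gen}\,(a) excludes relative genericity. The differences are organizational, and mostly to your credit. For~$\ell\neq n+m$ you reduce everything to Theorem~\ref{thm:main_corollary} via the inclusions~$S^H_{\text{controllable}}\subseteq S^H_{\text{stabilizable}}$ and~$S^H_{\text{completely stabilizable}}\subseteq S^H_{\text{freely initializable}}$,~$S^H_{\text{strongly stabilizable}}\subseteq S^H_{\text{impulse controllable}}$, whereas the paper re-derives these cases directly from the sets~$S_{(a)},\ldots,S_{(j)}$ of Proposition~\ref{lem:main_result_1}; the underlying mathematics is identical, but your reduction avoids repeating the bookkeeping. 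At~$\ell=n+m$ you treat all three notions at once through~$S^H_{\text{completely stabilizable}},\,S^H_{\text{strongly stabilizable}}\subseteq S^H_{\text{behavioural stabilizable}}$~--- these inclusions do hold, but deserve the one-line justification~$\rk_{\mathbb{C}}[\lambda E-A,B]\leq\rk_{\mathbb{K}(x)}[xE-A,B]\leq\rk[E,A,B]$, so that equality of the outer quantities forces equality with the middle one~--- while the paper instead intersects its open failure set~$S^H$ separately with the relative generic sets~$S_{(a)}\cap S_{(b)}$,~$S_{(b)}\cap S_{(d)}$ and~$S_{(c)}$. Your witness ($Q=E$, rank-one blocks with~$B_1J_{12}^*=e_1e_1^\top$, giving~$E^*Q=I_n>0$) differs from the paper's (whose witness has~$E^*Q=0$) but computes to the same determinant; and your persistence argument~--- nonvanishing leading coefficient~$\det[E,B]$ plus implicit-function-theorem continuation of the simple root at~$1$~--- is in fact more careful than the paper's blanket claim that~$S^H$ is open ``by continuity of the spectrum'', which is not literally true at singular pencils (though harmless at the constructed point). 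Two cosmetic points: the appeal to Lemma~\ref{lem:How_do_feasible_perturbations_look_like} and the ``inactive constraint'' discussion are superfluous, since your open-condition argument applies verbatim to every nearby point of~$\Sigma^H_{\ell,n,m}$; and the step from ``$(S^H_{\text{stabilizable}})^c$ is relative generic'' to ``$S^H_{\text{stabilizable}}$ is not relative generic'', needed for the ``only if'' direction when~$\ell>n+m$, should cite Proposition~\ref{prop:properties_rel_gen}\,(f).
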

\begin{proof}
We distinguish the two cases~$\ell\neq n+m$ and~$\ell = n+m$.

\noindent
Case~$\ell\neq n+m$:  \ 
In view of Proposition~\ref{Prop:Stab}, the inclusions
\begin{align*}
S_{(a)}\cap S_{(b)}\cap S_{(i)}\subseteq \begin{cases} S_{\text{completely stabilizable}}^H, & \ell<n+m,\\
(S_{\text{completely stabilizable}}^H)^c, & \ell>n+m,
\end{cases}
\end{align*}
and
\begin{align*}
S_{(b)}\cap S_{(d)}\cap S_{(i)}\subseteq \begin{cases} S_{\text{strongly stabilizable}}^H, & \ell<n+m,\\
(S_{\text{strongly stabilizable}}^H)^c, & \ell>n+m,
\end{cases}
\end{align*}
and
\begin{align*}
S_{(c)}\cap S_{(i)}\subseteq S_{\text{behavioural stabilizable}}^H
\end{align*}
hold true. Therefore, Proposition~\ref{lem:main_result_1} and Proposition~\ref{prop:properties_rel_gen}\,(b) and (c) yields the assertion for~$\ell\neq n+m$. 

\noindent
Case~$\ell = n+m$:  \ 
 We show that the set
\begin{align*}
S^H := \set{(E,J,Q,B)\in\Sigma_{\ell,n,m}^H\,\big\vert\,\exists\lambda\in\mathrm{int}\,\overline{\mathbb{C}}_+: \rk[\lambda E-JQ,B] < \ell = \min\set{\ell,n+m}}
\end{align*}
contains a inner point w.r.t.\ the Euclidean relative topology on~$\Sigma_{\ell,n,m}^H$. Then, we can show that the complement of~$S_{\text{\textit{controllable}}}^H$ has a relative inner point and is therefore especially not nowhere dense.

In passing we note that~$\ell = n+m\geq 2$. 
Since the spectrum of a matrix depends continuously from its entries,~$S^H$ is open and thus it suffices to show that~$S^H\neq\emptyset$. Put
\begin{align*}
E: = \begin{bmatrix}
I_n\\
0_{m\times n}
\end{bmatrix},
\ 
J := \begin{bmatrix}
0 & -1\\
1 & 0 & -1\\
& \ddots & \ddots & \ddots\\
& & 1 & 0 & -1\\
& & & 1 & 0
\end{bmatrix}\in\mathbb{K}^{\ell\times \ell},
\ 
Q := \begin{bmatrix}
0_{n\times n}\\
-e_n^\top\\
0_{(m-1)\times n}
\end{bmatrix},
\ 
B := \begin{bmatrix}
0_{n\times m}\\
I_m
\end{bmatrix}.
\end{align*}
Then~$E^*Q = 0_{n\times n}\in\mathbb{R}^{n\times n}$ is symmetric and positive semi-definite.
Furthermore,
\begin{align*}
JQ = \begin{cases}\begin{bmatrix}
0_{(n-1)\times n}\\
e_n^\top\\
0_{1\times n}\\
-e_n^\top\\
0_{(m-2)\times n}
\end{bmatrix}, & m\geq 2  \\[8ex]
\begin{bmatrix}
0_{(n-1)\times n}\\
e_n^\top\\
0_{1\times n}
\end{bmatrix}, & m = 1. 
\end{cases}
\end{align*}
This yields
\begin{align*}
[xE-JQ,B] = \begin{cases} 
\begin{bmatrix}
x\\
& &\ddots\\
& & &&x & \\
& & && & x-1\\
& & && & 0 &1\\
& & && & 1 & 0 & 1\\
& & & && & && \ddots\\
& & & & &&&&& 1
\end{bmatrix}, & m\geq 2\\[11ex]
\begin{bmatrix}
x\\
& &\ddots\\
& & &&x & \\
& & && & x-1\\
& & && & 0 &1
\end{bmatrix}, & m = 1.
\end{cases}
\end{align*}
Hence, we conclude
\begin{align*}
\det[xE-JQ,B] = x^{n-1}(x-1),
\end{align*}
which vanishes at~$x=1$; equivalently, since $[xE-JQ,B]\in\mathbb{K}[x]^{\ell\times \ell}$, $\rk[1\cdot E-JQ,B]<\ell$. Thus~$(E,J,Q,B)\in S^H$ and the latter set is non-empty. In view of Proposition~\ref{lem:main_result_1} and Proposition~\ref{prop:properties_rel_gen}\,(c), the sets~$S_{(a)}\cap S_{(b)}$,~$S_{(b)}\cap S_{(d)}$ and~$S_{(c)}$ are relative generic in~$\Sigma_{\ell,n,m}^H$. Therefore, we conclude that the sets~$S_{(a)}\cap S_{(b)}\cap S^H$,~$S_{(b)}\cap S_{(d)}\cap S^H$ and~$S_{(c)}\cap S^H$ have non-empty relative interior.
By Proposition~\ref{Def-Prop:Contr}, the inclusions
\begin{align*}
S_{(a)}\cap S_{(b)}\cap S^H & \subseteq (S_{\text{completely stabilizable}}^H)^c\\
S_{(b)}\cap S_{(d)}\cap S^H & \subseteq (S_{\text{strongly stabilizable}}^H)^c\\
S_{(c)}\cap S^H & \subseteq (S_{\text{behavioural stabilizable}}^H)^c
\end{align*}
hold true. Hence, the sets on the righthandside contain an inner point and are especially not nowhere dense. By Proposition~\ref{prop:properties_rel_gen}\,(a), we conclude that neither of the sets~$S_{\text{completely stabilizable}}^H$, $S_{\text{strongly stabilizable}}^H$ and~$S_{\text{behavioural stabilizable}}^H$ are relative generic in~$\Sigma_{\ell,n,m}^H$.

The respective statement for~$\Sigma_{\ell,n,m}^{sdH}$ can be proven similiarly by considering
\begin{align*}
S^{sdH} := \set{(E,J,Q,B)\in\Sigma_{\ell,n,m}^{sdH}\,\big\vert\,\exists\lambda\in\overline{\mathbb{C}}_+: \rk[\lambda E-JQ,B] < \min\set{\ell,n+m}}.
\end{align*}
Since~$\Sigma_{\ell,n,m}^{dH}$ is a relative generic subset of~$\Sigma_{\ell,n,m}^{sdH}$, Proposition~\ref{prop:properties_rel_gen}\,(e) yields the assertion for the reference set~$\Sigma_{\ell,n,m}^{dH}$. This completes the proof of the theorem.
\color{black}
\end{proof}

Finally we observe~-- as for controllability in Corollary~\ref{Cor:contr-DAE-pH}~--
that port-Hamiltonian descriptor systems~\eqref{eq:pH_DAE}  are generically stabilizable
if, and only if, 
the  
 unconstrained systems~\eqref{eq:the_DAE} 
are relative generically stabilizable in~$\Sigma_{\ell,n,m}^{H}$.

\begin{Corollary} \label{Cor:stab-DAE-pH}
Let~$\ell,n,m\in\mathbb{N}^*$. 
Then the port-Hamiltonian descriptor systems~\eqref{eq:pH_DAE} 
are generically stablizable  if, and only if, the   unconstrained  descriptor systems~\eqref{eq:the_DAE}
are relative generically stablizable in~$\Sigma_{\ell,n,m}^{H}$, 
where stablizable stands for either  of the three concepts in 
Proposition~\ref{Prop:Stab}.
\end{Corollary}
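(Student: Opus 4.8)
The plan is to mimic the proof of Corollary~\ref{Cor:contr-DAE-pH} line by line, with the three stabilizability concepts of Proposition~\ref{Prop:Stab} playing the rôle of the five controllability concepts. The mechanism behind both corollaries is the same: each of the two genericity properties that are to be compared is, by an already available theorem, equivalent to one and the same inequality among $\ell$, $n$ and $m$, so the asserted biconditional reduces to matching these inequalities.

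First I would extract the port-Hamiltonian side from Theorem~\ref{thm:stab}. For a fixed stabilizability notion, the set $S^{H}_{\textit{stabilizable}}$ (and likewise $S^{sdH}_{\textit{stabilizable}}$ in $\Sigma^{sdH}_{\ell,n,m}$ and $S^{dH}_{\textit{stabilizable}}$ in $\Sigma^{dH}_{\ell,n,m}$) is relative generic in its reference set if, and only if, $\ell\neq n+m$ in the behavioural case and $\ell<n+m$ in the complete and the strong case. This is precisely the meaning of the left-hand phrase, namely that the systems~\eqref{eq:pH_DAE} are generically stabilizable. Next I would invoke, for the unconstrained systems~\eqref{eq:the_DAE}, the stabilizability analogue of~\cite[Thm.~2.3]{IlchKirc21} (the stabilizability counterpart of the list~\eqref{eq:list-contr}, available through~\cite{IlchKirc22}), which characterizes relative generic stabilizability in~$\Sigma^{H}_{\ell,n,m}$ by exactly the same thresholds: $\ell\neq n+m$ for behavioural, and $\ell<n+m$ for complete and strong stabilizability. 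Combining the two characterizations concept by concept, both sides of the equivalence are equivalent to the identical dimensional condition and hence to each other; this is the verbatim conjunction argument already used for Corollary~\ref{Cor:contr-DAE-pH}.

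The step carrying the actual content — as opposed to a mechanical transcription — is the bookkeeping in the previous paragraph: I must make sure that the unconstrained result is quoted for the constrained reference set~$\Sigma^{H}_{\ell,n,m}$ with the thresholds of Theorem~\ref{thm:stab}, rather than merely as plain genericity in the full space~$\Sigma_{\ell,n,m}$, since genericity there does not automatically descend to the lower-dimensional set~$\Sigma^{H}_{\ell,n,m}$. Once the thresholds are confirmed to agree on both sides and for all three notions, no further density or Zariski-openness estimate is required and the proof collapses to a single line.
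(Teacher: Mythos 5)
Your overall mechanism---both sides of the asserted equivalence are characterized by one and the same dimensional threshold, so the biconditional follows by conjunction---is exactly the paper's argument, and your treatment of the port-Hamiltonian side via Theorem~\ref{thm:stab} is correct. The gap lies precisely in the step you single out as carrying the content. The result you invoke for the unconstrained systems~\eqref{eq:the_DAE}---a characterization of relative generic stabilizability of the unconstrained stabilizable set \emph{in the reference set}~$\Sigma_{\ell,n,m}^{H}$, ``available through~\cite{IlchKirc22}''---does not exist. The paper~\cite{IlchKirc22} treats reference sets of the form $\set{(E,A,B)\in\Sigma_{\ell,n,m}\,\big\vert\,\rk E\leq r}$ in the space of \emph{triples}, whereas $\Sigma_{\ell,n,m}^{H}$ is a set of \emph{quadruples} $(E,J,Q,B)$ with symmetry and semidefiniteness constraints; the set $S_{\text{stabilizable}}\subseteq\Sigma_{\ell,n,m}$ cannot even be compared with $\Sigma_{\ell,n,m}^{H}$ without fixing an identification. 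Under the only natural one, the pullback $(E,J,Q,B)\mapsto(E,JQ,B)$, the phrase ``the unconstrained stabilizable systems are relative generic in $\Sigma_{\ell,n,m}^{H}$'' becomes, by the very definition of $S_{\text{stabilizable}}^{H}$ in Proposition~\ref{Prop:Stab}, the statement ``$S_{\text{stabilizable}}^{H}$ is relative generic in $\Sigma_{\ell,n,m}^{H}$''---i.e.\ the left-hand side of the corollary. Your proof would then be circular: both sides of the equivalence would be the same statement, established by the same theorem.

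What the corollary actually compares, and what the paper's one-line proof does, is the following: the unconstrained side is plain Wonham genericity of stabilizability in the full matrix space $\Sigma_{\ell,n,m}$ (equivalently, by Proposition~\ref{prop:properties_rel_gen}\,(g), relative genericity in the open set $\Sigma_{\ell,n,m}$), quoted from the stabilizability analogue of~\eqref{eq:list-contr}, namely \cite[Thm.~3.3]{IlchKirc21}: behaviourally stabilizable iff $\ell\neq n+m$, completely and strongly stabilizable iff $\ell<n+m$. This is exactly what your final paragraph forbids (``rather than merely as plain genericity in the full space''). Your worry about genericity ``descending'' from $\Sigma_{\ell,n,m}$ to the lower-dimensional set $\Sigma_{\ell,n,m}^{H}$ is beside the point: no such descent is claimed or needed anywhere. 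The content of the corollary is that two a priori unrelated genericity statements---plain genericity for the unconstrained class, and relative genericity in $\Sigma_{\ell,n,m}^{H}$ for the port-Hamiltonian class (Theorem~\ref{thm:stab})---happen to be governed by identical dimension counts. Once you replace your nonexistent reference by \cite[Thm.~3.3]{IlchKirc21}, your concept-by-concept matching of thresholds goes through verbatim and coincides with the paper's proof.
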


\begin{proof}
The corollary is a direct consequence of the conjunction of 
\cite[Thm.~3.3]{IlchKirc21} and Theorem~\ref{thm:stab}.
\end{proof}

%________________________________________________________________
\section{Conclusion and outlook}\label{sec:outlook}
%_________________________________________________________________
We have studied relative genericity of controllability and stabilizability for port-Hamiltonian systems and derived necessary and sufficient conditions for relative genericity of five controllability and three stabilizability concepts. This extends the result of~\cite[Theorem II.1]{Kirc21pp} to descriptor systems with arbitrary dimensions and nontrivial dissipation matrix. 

For square port-Hamiltonian systems, that is,~$\ell = n$, the matrix~$E$ in~\eqref{eq:pH_DAE} is generically invertible. In this case, Theorem~\ref{thm:main_corollary} is very closely related to~\cite[Theorem II.1]{Kirc21pp}, although with the addition of a possibly (generically) nonzero~$R$. In future work, we will put particular care on the rank of~$E$ and consider the reference sets
\begin{align*}
\Sigma_{\ell,n,m}^{H,\leq r} := \set{(E,J,Q,B)\in\Sigma_{\ell,n,m}^{H}\,\big\vert\,\rk E\leq r}
\end{align*}
and
\begin{align*}
\Sigma_{\ell,n,m}^{dH,\leq r} := \set{(E,J,R,Q,B)\in\Sigma_{\ell,n,m}^{dH}\,\big\vert\,\rk E\leq r}.
\end{align*}
When~$\ell = n$ and~$r<n$, this excludes especially the case that the considered systems can be (generically) reduced to an ODE system. We are confident that we can extend~\cite[Theorems~3.2 and~4.2]{IlchKirc22}, where unstructured DAEs with the same restriction on the rank of~$E$ are considered, to these reference sets for port-Hamiltonian descriptor systems.

\color{black}
%_________________________________________________________________________
\vspace*{1cm}
\begin{footnotesize}
\textbf{Acknowledgements} We are indebted to our colleague Karl Worthmann~(TU~Ilmenau) for several constructive discussions.
\end{footnotesize}

\newpage

\bibliographystyle{spmpsci}   
\bibliography{MST,Ergaenzungen-2022-08-30}
\end{document}